\theoremstyle{plain}
\newtheorem{theorem}{Theorem}[section]
\newtheorem{lemma}{Lemma}[section]
\newtheorem{proposition}{Proposition}[section]
\newtheorem{corollary}{Corollary}[section]
\theoremstyle{definition}
\newtheorem{definition}{Definition}[section]
\theoremstyle{remark}
\newtheorem{question}{Question}[section]
\newtheorem{claim}{Claim}[section]
\newcommand{\itz}{\begin{itemize}}
\newcommand{\zti}{\end{itemize}}
\newcommand{\enu}{\begin{enumerate}}
\newcommand{\une}{\end{enumerate}}
\newcommand{\ie}{{\it i.e.},}
\newcommand{\card}[1]{\lvert #1\rvert}
\newcommand{\al}{\aleph}
\newcommand{\af}{\alpha}
\newcommand{\om}{\omega}
\newcommand{\ka}{\kappa}
\newcommand{\lm}{\lambda}
\newcommand{\oml}{\om_1}
\newcommand{\alo}{\aleph_0}
\newcommand{\eps}{\varepsilon}
\newcommand{\lapps}[1]{long $#1$-approximation sequence}
\newcommand{\cardfloor}[1]{\left\lfloor #1 \right\rfloor}
\newcommand{\cardhead}[2]{\cardfloor{#2}_{#1}}
\newcommand{\cardterm}[2]{\partial_{#1} #2}
\newcommand{\bighcard}{\mu}
\newcommand{\bigh}{H(\bighcard)}
\newcommand{\elemsub}{\prec}
\newcommand{\integers}{\mathbb{Z}}
\newcommand{\restrict}{\upharpoonright}
\DeclareMathOperator{\Fr}{Fr}
\DeclareMathOperator{\fr}{fr}
\DeclareMathOperator{\ran}{ran}
\newcommand{\subalg}{\leq}
\newcommand{\subrc}{\leq_{\mathrm{rc}}}
\newcommand{\subfree}{\leq_{\mathrm{free}}}
\DeclareMathOperator{\dom}{dom}
\DeclareMathOperator{\id}{id}
\newcommand{\vn}{\varnothing}
\newcommand{\mc}[1]{\mathcal{#1}}
\newcommand{\pushout}{\boxplus}
\newcommand{\istst}{it suffices to show that}
\newcommand{\Istst}{It suffices to show that}
\newcommand{\shiftr}{\triangleright}
\newcommand{\shiftl}{\triangleleft}
\newcommand{\commute}[1][]{
  \mathrel{
    \mathop{
      \vcenter{
        \hbox{\oalign{\noalign{\kern-.3ex}\hfil$\vert$\hfil\cr
              \noalign{\kern-.7ex}
              $\smile$\cr\noalign{\kern-.3ex}}}
      }
    }\displaylimits_{#1}
  }
}
\begin{document}
\title{On the strong Freese-Nation property}
\author{David Milovich}
\email{david.milovich@tamiu.edu}
\urladdr{http://www.tamiu.edu/~dmilovich}
\address{
Dept. of Mathematics and Physics, 
Texas A{\&}M International University, 
5201 University Blvd., Laredo, TX 78041, USA
}
\date{Dec. 23, 2014; revised July 7, 2016}
\begin{abstract}
We show that there is a boolean algebra 
that has the Freese-Nation property (FN) but not the strong
Freese-Nation property (SFN), thus answering a question of
Heindorf and Shapiro. Along the way, we produce some
new characterizations of the FN and SFN in
terms of sequences of elementary submodels.
\end{abstract}
\subjclass[2010]{Primary: 06E05, 03E75}
\keywords{Freese-Nation property, FN, strong Freese-Nation property, SFN, elementary submodel, boolean algebra, Davies tree, Davies sequence, \lapps{\oml}}

\maketitle

\section{The Freese-Nation property and friends}

\begin{definition}\
\itz
\item 
Given a poset $P$ and a map $f$ from $P$
to the power set of $P$, we say that $f$ is
\emph{interpolating} if, for all pairs $x\leq_P y$,
there exists $z\in[x,y]\cap f(x)\cap f(y)$. 
\item
We say that  poset $P$ has the \emph{Freese-Nation property (FN)} if there is
an interpolating map from  $P$ to 
$[P]^{<\alo}$, the set of finite subsets of $P$.
Such a map is called an \emph{FN map}.
\zti
\end{definition}

When a $P$ is also a boolean algebra, the FN can be
understood as an abstraction of the Interpolation
Theorem of propositional logic, which states that
if the implication $\varphi\rightarrow\psi$
is tautological for two propositions $\varphi$
and $\psi$, then there is a proposition $\chi$
such that $\varphi\rightarrow\chi$ and $\chi\rightarrow\psi$
are tautological and the propositional variables of $\chi$
are common to $\varphi$ and $\psi$. An easy 
consequence of the Interpolation Theorem is that
free boolean algebras have the FN.

The FN is named after
Freese and Nation~\cite{freesenation}, 
who introduced it in 1978 as part of 
a characterization of projective lattices.
In particular, every projective lattice has the FN
(but the converse was already known to be false, 
even for finite lattices).
Since the morphisms in the category
of lattices and lattice homomorphisms
are epimorphisms if and only if they are surjective,
a lattice is projective if and only if it is a retract
of a free lattice. Likewise, a boolean algebra is
projective if and only if it is a retract of a free
boolean algebra. The Stone duals of the projective
boolean algebras are exactly the Dugundji spaces,
\ie\ the continuous retracts of powers of 2.

The Stone duals of the boolean algebras with the
FN were elegantly characterized in two ways 
by \v S\v cepin~\cite{sckmet,scfunct},
as the existence of a distance function between 
points and regular closed sets and as the existence
of a rich family of open quotient maps.
Succinctly, a compact Hausdorff space is ``$k$-metrizable''
if and only if it is ``openly generated;''
a boolean space is openly generated if and only if 
its clopen algebra has the FN.
\v S\v cepin also proved that every Dugundji space
is openly generated, that the Vietoris hyperspace operation
preserves open generation, and that every openly
generated boolean space of weight at most $\al_1$
is Dugundji. However, Shapiro~\cite{shapiro} proved that the 
Vietoris hyperspace of $2^\ka$ is not a continuous
image of a power of 2 if $\ka\geq\al_2$.
Thus, for boolean algebras up to size $\al_1$, 
the FN is equivalent to projectivity, while
for boolean algebras in general, projectively
strictly implies the FN.

Fuchino translated \v S\v cepin's notion
of openly generated into the language of elementary
substructures in an appendix to \cite{hs}.
Before we can state this characterization,
we need a few definitions.

\begin{definition}\
\itz
\item If $P$ is a poset, $S$ is a set, and $p\in P$,
then, when they exist, let
 \itz
 \item $\pi_+^S(p)=\min\{q\in P\cap S:q\geq p\}$ and
 \item $\pi_-^S(p)=\min\{q\in P\cap S:q\leq p\}$.
 \zti
\item
Given a poset $P$ and $Q\subseteq P$,
we say that $Q$ is a \emph{relatively complete} suborder of $P$
if, for all $p\in P$, $\pi_+^Q(p),\pi_-^Q(p)$ exist.
\item 
Given boolean algebras $A$ and $B$, we write $A\subalg B$
to indicate that $A$ is a subalgebra of $B$.
\item
If $B$ is a boolean algebra, $A\subalg B$,
and $A$ is a relatively complete suborder of $B$, 
then we write $A\subrc B$.
\item 
If $\psi\colon A\rightarrow B$ is a boolean homomorphism,
we say that $\psi$ is relatively complete if $\psi[A]\subrc B$.
\zti
\end{definition}

Note that the topological dual of a relatively complete boolean
homomorphism is an open map between two boolean spaces.

\begin{definition}\
\itz
\item
Given two sets $P$ and $Q$, we write $P\elemsub Q$ if
$(P,\in)$ is an elementary substructure of $(Q,\in)$.
\item 
Given a cardinal $\mu$, let $H(\mu)$ denote the set 
of all sets with transitive closure smaller than $\mu$.
\zti
\end{definition}

Given a boolean algebra 
$\mc{A}=(A,0,1,\wedge,\vee,{-})$, 
we will abuse notation by using $A$ to denote both $A$ 
and $\mc{A}$. In particular, when we write $A\in M$ 
for some set $M$, we mean $\mc{A}\in M$.

\begin{theorem}[Fuchino]\label{fuchinochar}
Let $A$ be a boolean algebra and let $\bighcard$
be a regular uncountable cardinal such that $A\in\bigh$.
The following are then equivalent.
\enu
\item
$A$ has the FN.
\item 
$A\cap M\subrc A$ for all countable $M$ satisfying $A\in M\elemsub\bigh$.
\item 
$A\cap M\subrc A$ for all $M$ satisfying $A\in M\elemsub\bigh$.
\une
\end{theorem}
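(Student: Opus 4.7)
My plan is to prove the cycle $(1) \Rightarrow (3) \Rightarrow (2) \Rightarrow (1)$; $(3) \Rightarrow (2)$ is immediate, so the real content lies in the other two implications. For $(1) \Rightarrow (3)$, I fix $M$ with $A \in M \elemsub \bigh$ and invoke elementarity: since ``$A$ has the FN'' is a first-order assertion about $A$ satisfied in $\bigh$, there is an FN map $f \in M$ for $A$. I will show $\pi_+^{A \cap M}(p)$ exists for each $p \in A$; the $\pi_-$ case is dual. For any $q \in A \cap M$ with $q \geq p$, interpolation yields some $r \in f(p) \cap f(q) \cap [p, q]$; since $f, q \in M$ we have $f(q) \in M$, and because $f(q)$ is finite each of its elements also lies in $M$. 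Thus $r \in f(p) \cap M \cap [p, q]$, and so the finite set $F := f(p) \cap A \cap M \cap [p, \infty)$ has some element below every $q \in A \cap M$ above $p$. Since $A \cap M$ is a subalgebra (the boolean operations of $A$ belong to $M$), the meet $\bigwedge F$ lies in $A \cap M$, dominates $p$, and is below every upper bound of $p$ in $A \cap M$; that meet is $\pi_+^{A \cap M}(p)$.

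For $(2) \Rightarrow (1)$, I would construct an FN map out of projections onto rc subalgebras of the form $A \cap M$. I build a skeleton $\mc{S} = \{M_\xi : \xi < \lm\}$ of countable elementary submodels of $\bigh$, each containing $A$, with $A \subseteq \bigcup_\xi M_\xi$ and arranged as a \lapps{\oml} or Davies tree of the kind advertised by the paper's keywords. Hypothesis $(2)$ guarantees $A \cap M_\xi \subrc A$, so every projection $\pi_\pm^{A \cap M_\xi}(p)$ is defined. I would define $f(p)$ by collecting projections of $p$ onto $A \cap M_\xi$ as $\xi$ ranges over a finite subfamily of $\mc{S}$ determined by where $p$ first enters the skeleton. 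For interpolation of $p \leq q$, I locate $\xi$ so that exactly one of $p, q$ lies in $M_\xi$; then the projection of the external element through $A \cap M_\xi$ (which is rc and contains the internal one) falls in $[p, q]$, and the skeleton should be organized so that the resulting interpolant is witnessed simultaneously by $f(p)$ and $f(q)$.

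The principal obstacle is ensuring $f(p)$ is finite: a priori the full set of projections $\{\pi_\pm^{A \cap M}(p) : M \in \mc{S}\}$ could be merely countable. Trimming it to a finite set requires essential use of the tree-like branching of $\mc{S}$. I expect the cleanest route to proceed by induction on $|A|$: the FN is trivial when $A$ is countable, and in the inductive step the rc subalgebras $A \cap M$ from $(2)$ have strictly smaller cardinality, so one can pull back FN maps from them and stitch them using projections. The delicate step is checking that this stitching does not accumulate unboundedly many interpolants near the boundaries between successive rc subalgebras; controlling that accumulation is where the Davies-sequence technology developed later in the paper should do the real work.
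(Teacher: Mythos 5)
The paper does not supply its own proof of Theorem~\ref{fuchinochar}; it is quoted verbatim from Fuchino's appendix to~\cite{hs}. The nearest thing to a proof in the paper is Theorem~\ref{fnchar}, whose implication \eqref{fn:somepi}$\Rightarrow$\eqref{fn:fn}\ is precisely the engine needed for the hard direction, so that is the right benchmark for your sketch.

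Your $(1)\Rightarrow(3)$ argument is correct and essentially the standard one: pull an FN map $f$ into $M$ by elementarity; observe that for $q\in A\cap M$ above $p$, any interpolant $r\in f(p)\cap f(q)\cap[p,q]$ lies in $M$ because $f(q)\in M$ is finite; and then $\bigwedge\bigl(f(p)\cap M\cap[p,1]\bigr)$, which is nonempty since $1\in A\cap M$ and belongs to the subalgebra $A\cap M$, serves as $\pi_+^{A\cap M}(p)$. And $(3)\Rightarrow(2)$ is indeed trivial.

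Your $(2)\Rightarrow(1)$, however, is a plan rather than a proof, and it has two genuine gaps. First, the finiteness of $f(p)$ is the whole game, and ``tree-like branching'' is not by itself a justification. The mechanism that actually works is the one developed in Section~2: cover $A$ by a \lapps{\oml}\ $(M_\af)_{\af<\card{A}}$ with $A\in M_0$, and use the $\oml$-truncated cardinal normal form to decompose $\bigcup_{\beta<\rho(x)}M_\beta$ into $\daleth(\rho(x))<\om$ directed pieces $M_{\rho(x),i}$ (Lemmas~\ref{lappsfund}, \ref{strata}). Each projection $\pi_\pm^i(x)$ has strictly smaller $\rho$-rank, so the recursion $f(x)=\{y:y\sqsubseteq x\}\cup\bigcup_{i<\daleth(\rho(x))}\bigl(f(\pi_+^i(x))\cup f(\pi_-^i(x))\bigr)$ bottoms out and produces a finite set; interpolation of $x\leq y$ is then verified by induction on $\max\{\rho(x),\rho(y)\}$. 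Nothing in your sketch isolates a finite bound on the number of directed pieces, and without it the set of projections you collect is a priori countable.

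Second, and more subtly, the pieces $M_{\rho(x),i}$ onto which you must project are typically \emph{uncountable} unions of the countable models $M_\beta$, so hypothesis $(2)$ does not directly hand you $A\cap M_{\rho(x),i}\subrc A$. You must either first upgrade $(2)$ to $(3)$ by an elementary-submodel argument (fix $p$ and $M$, take a countable $N\elemsub(\bigh,\in,M)$ with $A,p\in N$, verify $\pi_+^{A\cap N\cap M}(p)$ works, and transfer by elementarity), or else argue via the countable traces $M'_{\rho(x),i}=M_{\rho(x),i}\cap M_{\rho(x)}$ together with the commuting condition $A\cap M_{\rho(x)}\commute A\cap M_{\rho(x),i}$ and Lemma~\ref{projrestrict}, as the paper does in Theorem~\ref{fnchar}. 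Your sketch quietly treats each $M_{\rho(x),i}$ as if it were itself a member of the countable skeleton, which it is not; as written, the appeal to hypothesis $(2)$ there does not apply.
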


If we weaken the definition of FN map to allow as outputs
countable sets instead of merely finite sets, then we obtain
the weak Freese-Nation property (WFN), which was
initially investigated in topological terms by
\v S\v cepin~\cite{scfunct} and later systematically
studied in Heindorf and Shapiro's 1994 book {\it Nearly Projective
Boolean Algebras}~\cite{hs}. For our purposes, their
most interesting result about the WFN is a characterization
of it as the existence of a rich family of commuting subalgebras.
Elementary substructure characterizations 
analogous to (2) and (3) from the previous theorem were 
proved by Fuchino, Koppelberg, and Shelah in~\cite{fks} and
by the author in~\cite{mlkfn}, respectively.

\begin{definition}\
\itz
\item
Given a poset $P$ and $A,B\subseteq P$, we say
that $A$ and $B$ \emph{commute}, writing $A \commute B$,
if, for all pairs $(x,y)\in A\times B$, if
$x\leq y$, then $[x,y]\cap A\cap B$ is nonempty, and
if $y\leq x$, then $[y,x]\cap A\cap B$ is nonempty.
\item Given a poset $P$ and $Q\subseteq P$, we say
that $Q\subseteq_\sigma P$ if, for all $p\in P$,
there exist countable sets $L(p),U(p)\subseteq Q$ such that 
 \itz
 \item
 $\{q\in Q:q\leq p\}=\bigcup_{r\in L(p)}\{q\in Q:q\leq r\}$ and
 \item $\{q\in Q:q\geq p\}=\bigcup_{r\in U(p)}\{q\in Q:q\geq r\}$.
 \zti
\zti
\end{definition}
Note that if $A$ and $B$ are subalgebras of a boolean algebra $C$,
then $A\commute B$ if and only if, for all ultrafilters $U$ of $A$
and $V$ of $B$, if $U\cap B=V\cap A$, then $U\cup V$ extends to an
ultrafilter of $C$.

\begin{theorem}
Let $A$ be a boolean algebra and let $\bighcard$
be a regular uncountable cardinal such that $A\in\bigh$.
The following are then equivalent.
\enu
\item $A$ has the WFN, \ie\ there is an interpolating map
from $A$ to $[A]^{<\al_1}$.
\item (Fuchino, Koppelberg, Shelah) $A\cap M\subseteq_\sigma A$
for all $M$ satisfying $A\in M\elemsub\bigh$ and 
$\card{M}=\oml\subseteq M$.
\item $A\cap M\subseteq_\sigma A$
for all $M$ satisfying $A\in M\elemsub\bigh$ and 
$\oml\subseteq M$.
\item (\v S\v cepin, Heindorf, Shapiro) There is a cofinal
family $\mc{C}$ of countable subalgebras of $A$ such that
$F\commute G$ for all $F,G\in\mc{C}$.
\une
\end{theorem}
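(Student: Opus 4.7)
The new content is the implication $(1)\Rightarrow(3)$; the equivalence $(1)\Leftrightarrow(2)$ is from~\cite{fks}, the equivalence $(1)\Leftrightarrow(4)$ is from \v S\v cepin and Heindorf-Shapiro~\cite{hs}, and $(3)\Rightarrow(2)$ is immediate since any $M$ with $|M|=\omega_1\subseteq M$ is already among those considered in~(3). My plan is to prove $(1)\Rightarrow(3)$ in detail and to sketch the other three implications briefly.

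For $(1)\Rightarrow(3)$, fix a WFN map $f\colon A\to[A]^{<\aleph_1}$ and, by elementarity (enlarging $\bighcard$ first if needed so that $\bigh$ contains a WFN map for $A$), obtain such an $f$ with $f\in M$. For each $p\in A$ set
\[U(p)=\{r\in f(p)\cap M:r\geq p\},\qquad L(p)=\{r\in f(p)\cap M:r\leq p\},\]
both countable subsets of $A\cap M$. To verify the upward half of $A\cap M\subseteq_\sigma A$ at $p$, fix $q\in A\cap M$ with $q\geq p$. Interpolation yields some $r\in f(p)\cap f(q)\cap[p,q]$. Since $f,q\in M$ we have $f(q)\in M$, and because $f(q)$ is countable with $\omega_1\subseteq M$, in fact $f(q)\subseteq M$; hence $r\in f(p)\cap M$ with $p\leq r\leq q$, i.e., $r\in U(p)$ and $r\leq q$, as required. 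The downward half is symmetric. Notably, this argument handles $p\notin M$ equally well, since elementarity is applied only at the parameter $q\in M$.

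For $(2)\Rightarrow(1)$, cited from~\cite{fks}, the strategy is to take a continuous $\in$-chain of $\omega_1$-sized elementary submodels of $\bigh$ whose union contains $A$, apply~(2) at each stage to fix countable $\subseteq_\sigma$-witnesses lying in the next model of the chain, and assemble them into a WFN map by stratifying elements of $A$ according to their first appearance in the chain. For $(1)\Rightarrow(4)$: closing countable subsets of $A$ under $f$ and the boolean operations produces a cofinal family of countable subalgebras $F$ with $f[F]\subseteq F$, and any two such $F,F'$ commute because the interpolant of $x\in F,y\in F'$ lies in $f(x)\cap f(y)\subseteq F\cap F'$. For $(4)\Rightarrow(1)$: choose for each $p$ some $F_p\in\mc C$ containing $p$ and set $f(p)=F_p$; interpolation for $p\leq q$ follows directly from $F_p\commute F_q$.

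The real obstacle is the cited $(2)\Rightarrow(1)$. For the new step $(1)\Rightarrow(3)$ itself, the only delicate point is arranging $f\in M$, which requires $\bigh$ to see a WFN map; once that is secured, the hypothesis $\omega_1\subseteq M$ pins countable members of $M$ inside $M$, which is exactly what lets $(3)$ (allowing $M$ of arbitrary cardinality) be no stronger than the FKS version~(2).
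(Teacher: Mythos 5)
The paper does not actually prove this theorem: it states it as a survey of known results, attributing the equivalence with (2) to Fuchino--Koppelberg--Shelah \cite{fks}, the equivalence with (3) to the author's earlier paper \cite{mlkfn}, and the equivalence with (4) to \v S\v cepin and Heindorf--Shapiro \cite{hs}. So there is no internal proof to compare against; I can only assess your argument on its own merits.

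Your direct proof of $(1)\Rightarrow(3)$ is correct, and it is the key implication not covered by simply quoting \cite{fks} or \cite{hs}. The heart of it is exactly as you identify: $\om_1\subseteq M$ forces $f(q)\subseteq M$ for $q\in M$ (a countable set that is an element of such an $M$ is a subset of it), so the interpolant $z\in f(p)\cap f(q)\cap[p,q]$ lands in $f(p)\cap M$, making $U(p)=\{r\in f(p)\cap M:r\geq p\}$ a countable witness in $A\cap M$ for the upward half of $\subseteq_\sigma$, and symmetrically for $L(p)$. Your observation that the argument applies uniformly for $p\notin M$, because elementarity is invoked only at $q$, is the right thing to emphasize.

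Two small points. First, the parenthetical about ``enlarging $\bighcard$'' is both unnecessary and slightly off, since $\bighcard$ is fixed in the theorem's hypothesis and cannot be replaced. It is unnecessary because a WFN map $f\colon A\to[A]^{<\al_1}$ has transitive closure of size $\card{A}$, so $A\in\bigh$ already forces $f\in\bigh$; and the assertion that such an $f$ interpolates is absolute for $\bigh$, so $\bigh$ (and hence $M$, by elementarity) correctly believes $A$ has the WFN and supplies a genuine WFN map $f\in M$. Second, in the sketch of $(1)\Rightarrow(4)$ you want the closure condition $\bigcup f[F]\subseteq F$ (that is, $f(x)\subseteq F$ for all $x\in F$), not literally $f[F]\subseteq F$. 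The remaining sketches are sound, and your cycle $(1)\Rightarrow(3)\Rightarrow(2)\Rightarrow(1)$ together with $(1)\Leftrightarrow(4)$ closes all four equivalences.
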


In~\cite{hs}, Heindorf and Shapiro defined the 
natural analog of (4) for the FN to be the strong Freese-Nation 
property (SFN).

\begin{definition}
A boolean algebra has the \emph{SFN} if and only if it
has a pairwise commuting cofinal family of finite subalgebras.
\end{definition}

Also in~\cite{hs}, Heindorf and Shapiro showed 
that projectivity implies the SFN implies the FN. 
Hence, the three properties are equivalent for 
boolean algebras of size at most $\al_1$.
They further showed that the symmetric square 
and exponential operations preserve the SFN;
\v S\v cepin had already shown the same for
the FN~\cite{sckmet}. 
On the other hand, if $\ka\geq\al_2$, then
the exponential~\cite{shapiro}\
and symmetric square~\cite{scfunct}\ of a
free boolean algebra of size $\ka$
are not projective. Thus, the two most natural 
examples of non-projective boolean algebras 
with the FN also have the SFN. Naturally, 
Heindorf and Shapiro posed the question of whether 
the SFN is actually equivalent to the FN. Twenty years later,
there appears to have been no subsequent published work on the SFN.
The primary motivation of this work is to answer 
Heindorf and Shapiro's question.

\begin{theorem}\label{mainthm}
There is a boolean algebra of size $\al_2$ that
has the FN but not the SFN.
\end{theorem}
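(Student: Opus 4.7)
The plan is to build $A$ as a continuous increasing chain $A=\bigcup_{\alpha<\omega_2}A_\alpha$ of subalgebras, where each $A_\alpha$ has cardinality at most $\aleph_1$ and $A_\alpha\subrc A_\beta$ whenever $\alpha\leq\beta$. Since every boolean algebra of size at most $\aleph_1$ with the FN is projective, and hence has the SFN, I arrange that each $A_\alpha$ is itself projective; the difficulty is therefore purely in the $\omega_2$-scale gluing. The algebra $A$ can be described via a long $\omega_1$-approximation sequence (in the sense highlighted in the paper's keywords), so that for every countable $M\elemsub\bigh$ with $A\in M$ the restriction to $A\cap M$ is controlled by the chain.

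To verify the FN, I apply Fuchino's characterization (Theorem~\ref{fuchinochar}(2)): it suffices to show $A\cap M\subrc A$ for all countable $M\elemsub\bigh$ with $A\in M$. For such $M$, elementarity gives $A\cap M=\bigcup_{\alpha\in M\cap\omega_2}(A_\alpha\cap M)$. Using the induction hypothesis that $A_\alpha$ has the FN, I get $A_\alpha\cap M\subrc A_\alpha$, and the relative completeness of the chain $A_\alpha\subrc A_\beta$ lets me lift this to $A_\alpha\cap M\subrc A$. A cofinality argument on $M\cap\omega_2$ (which has cofinality $\omega$) then yields that $\pi_\pm^{A\cap M}(p)$ exists for every $p\in A$, by locating the stage where $p$ first enters the chain and applying the previously established relative completeness below.

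The heart of the matter is the failure of the SFN, which is where a combinatorial obstruction at the $\aleph_2$ scale must be engineered into $A$. My approach is to include, during the construction, a family $\{b_{\alpha,\beta}:\alpha<\beta<\omega_2\}$ (or a family indexed by a suitably chosen triple system) coding incompatibilities in the spirit of Shapiro's non-projectivity arguments for $\exp(2^{\aleph_2})$: any pair of finite subalgebras of $A$ capturing ``enough'' of this family at two sufficiently separated coordinates is forced to fail commutativity, because a required witness in $[x,y]\cap F\cap F'$ would lie in a subalgebra generated by too few coordinates of the family. Assuming towards contradiction that $\mathcal{F}$ is a pairwise commuting cofinal family of finite subalgebras, I apply a $\Delta$-system lemma on the finite supports of members of $\mathcal{F}$ to extract an uncountable subfamily with a uniform root, and within it a pair $(F,F')\in\mathcal{F}^2$ whose supports meet the built-in incompatibility; the commuting requirement $F\commute F'$ then yields the desired contradiction.

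The main obstacle is precisely this step: the obstruction must be \emph{global}, invisible to every countable elementary submodel (or else the FN would fail), yet visible to any cofinal family of finite subalgebras. Constructing such an obstruction is delicate because ``relatively complete in every countable submodel'' is a strong tameness property; the design must exploit that finite subalgebras, unlike countable ones, cannot absorb limits and so are constrained by coherence conditions that only manifest at size $\aleph_2$.
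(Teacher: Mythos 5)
Your proposal takes a genuinely different route from the paper, and both halves have gaps.

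\textbf{FN verification.} You build $A$ as a continuous $\subrc$-chain $(A_\alpha)_{\alpha<\omega_2}$ of projective algebras of size at most $\aleph_1$ and then claim $A\cap M\subrc A$ for countable $M\ni A$ by writing $A\cap M=\bigcup_{\alpha\in M\cap\omega_2}(A_\alpha\cap M)$ and invoking a ``cofinality argument.'' The gap is that $M\cap\omega_2$ has cofinality $\omega$, and a countable increasing union of relatively complete suborders of $A$ need not be relatively complete in $A$. Concretely, for $p\in A$ and $\alpha_n\nearrow\sup(M\cap\omega_2)$ you do get each $\pi_+^{A_{\alpha_n}\cap M}(p)$, but these form a non-increasing $\omega$-sequence whose infimum need not exist in $A$, let alone lie in $A\cap M$. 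The paper steers around precisely this: it replaces the chain by a directed system of \emph{countable} algebras aligned to a \lapps{\oml}\ and obtains the FN from Theorem~\ref{fnchar}, where the projections $\pi_\pm^i$ onto the finitely many strands $M_{\rho(x),i}$ give the required interpolants without any infinitary limit.

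\textbf{SFN failure.} The ``Shapiro-style incompatibility plus $\Delta$-system'' plan points the wrong way. Shapiro's obstruction defeats projectivity, but, as the paper observes, his exponential example \emph{does} have the SFN; so the incompatibilities you propose to import are invisible to commuting families of finite subalgebras. What actually carries the paper's argument is Theorem~\ref{mlocfin}: if $A$ has the SFN and $(M_\alpha)$ is a \lapps{\oml}\ with $A\in M_0$, then the expansion $A[M]$ by the retrospective projection functions $\pi_\pm^i$ is \emph{locally finite}. The counterexample is then engineered, at each two-strand stage $\alpha$, by a pushout-plus-quotient step that forces $\pi_+^0(b_{\alpha,1}^m\wedge h_{\alpha,1})=-b_{\alpha,0}^{m+1}$ and $\pi_+^1(b_{\alpha,0}^m\wedge h_{\alpha,0})=-b_{\alpha,1}^m$, creating an infinite orbit of the pair $\{\pi_+^0,\pi_+^1\}$ and so killing local finiteness. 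You correctly identify the tension (the obstruction must be invisible to countable elementary submodels yet visible to finite subalgebras), but a $\Delta$-system argument supplies no mechanism to realize it, and indeed doesn't engage with the one constraint that matters: since $A$ must retain the FN, every ``bad pair'' of finite subalgebras sits inside a countable $M$ with $A\cap M\subrc A$, and nothing in the $\Delta$-system sketch explains why this tameness fails to dissolve the incompatibility. The paper's interleaved $\pi_+^0/\pi_+^1$ recursion is exactly that missing mechanism.
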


To prove the above, we require new characterizations of
the FN and SFN in terms of ``retrospective'' sequences of
countable elementary submodels, as we shall explain shortly.
We expect that the techniques we use here for separating
the FN and SFN will see wider application in the future,
and have stated many lemmas in anticipating generality. 

For additional background information about the classes
of boolean algebras defined by the SFN, FN, WFN, 
and projectivity, we refer the reader to~\cite{hs}.

\section{Retrospective sequences of elementary substructures}

Our proof of Theorem~\ref{mainthm} uses \lapps{\lm}s, 
which one can think of as a poor man's higher-gap morasses, 
available in ZFC. These sequences were introduced in~\cite{nthomog} 
as a more flexible version of Davies' 
trees of substructures~\cite{davies}.
Davies used such a tree to prove that the plane is a union
of countably many rotated graphs of functions;
Jackson and Mauldin~\cite{jackson} used such a tree to prove 
that there exists a subset of the plane intersecting every
isometric copy of $\integers^2$ at exactly one point.
The main application of \lapps{\lm}s in~\cite{nthomog}
was to prove that, for a class of topological spaces
that includes every compact group, every topological base 
of a space contains a base of the same space
which is upper finite with respect to inclusion.

\begin{definition}\
\itz
\item
Call a sequence of sets $(A_i)_{i\in I}$ \emph{retrospective}
if $I$ is an ordinal and, for all $i\in I$, the
the sequence $(A_j)_{j<i}$ is an element of $A_i$.
\item 
Given $\bighcard$ a regular uncountable cardinal and
$\lm$ a regular uncountable cardinal at most $\bighcard$,
call a set $M$ \emph{$\lm$-approximating} if
$\card{M}<\lm$, $M\cap\lm\in \lm$, and $M\elemsub\bigh$.
\item
Given $\bighcard$ and $\lm$ as above,
and $\eta$ an ordinal at most $\bighcard$,
a transfinite sequence $(M_i)_{i<\eta}$ is called
a \emph{\lapps{\lm}}\ if it is retrospective and
$M_i$ is $\lm$-approximating for all $i<\eta$.
\zti
\end{definition}

(In Definition 3.16 of~\cite{nthomog}, it was required 
of \lapps{\lm}s that also $\card{M_i}\subseteq M_i$ 
and $\lm\in M_i$. Here, we do not require 
$\card{M_i}\subseteq M_i$ because it is not needed 
for any applications (so far).
We do not require $\lm\in M_i$ for the
same reason, and because if $\lm\leq i<\eta$, 
then $\lm$ is definable in $M_i$
as $\sup_{j<i}\min(i\setminus M_j)$.)

The requirement $M_i\cap\lm\in\lm$ is succinct but perhaps
obscures its intended application, which is that for all
$A\in M_i$, if $\card{A}<\lm$, then $A\subseteq M_i$.
In particular, if $i<\lm$, then $\bigcup_{j<i}M_j\subseteq M_i$.
Also notice that the requirement
$M_i\cap\lm\in\lm$ is redundant if $\lm=\oml$.

\begin{lemma}\label{ubiquity}
Given regular uncountable cardinals $\lm\leq\bighcard$,
$A\in[\bigh]^{<\lm}$, $\eta<\bighcard$, and $(M_i)_{i<\eta}$
a \lapps{\lm}, there exists $M_\eta$ such that
$A\subseteq M_\eta$ and $(M_i)_{i<\eta+1}$ is a \lapps{\lm}.
\end{lemma}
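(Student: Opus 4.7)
The plan is to obtain $M_\eta$ as the union of a countable elementary chain of submodels of $H(\mu)$, each of size strictly less than $\lambda$, where at every step we fill in enough ordinals below $\lambda$ so that the ordinal-part condition $M_\eta\cap\lambda\in\lambda$ holds in the limit. The only conditions to verify at the end are that $M_\eta\elemsub\bigh$, that $\card{M_\eta}<\lm$, that $M_\eta\cap\lm\in\lm$, that $A\subseteq M_\eta$, and that $(M_j)_{j<\eta}\in M_\eta$; all are straightforward to arrange.

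First I would note that the sequence $(M_j)_{j<\eta}$ lies in $\bigh$: each $M_j$ has cardinality below $\lm\leq\bighcard$, so each $M_j\in\bigh$, and $\eta<\bighcard$ then forces the sequence itself into $\bigh$. Then by Löwenheim–Skolem, pick $N_0\elemsub\bigh$ with $N_0\supseteq A\cup\{(M_j)_{j<\eta}\}$ and $\card{N_0}\leq\card{A}+\alo<\lm$, using that $\lm$ is uncountable. Recursively, given $N_k$ with $\card{N_k}<\lm$, set $\af_k=\sup(N_k\cap\lm)+1$; regularity of $\lm$ gives $\af_k<\lm$, and I would then pick $N_{k+1}\elemsub\bigh$ with $N_{k+1}\supseteq N_k\cup\af_k$ and $\card{N_{k+1}}<\lm$. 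Finally set $M_\eta=\bigcup_{k<\om}N_k$.

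Then $M_\eta\elemsub\bigh$ as the union of an elementary chain, and $\card{M_\eta}<\lm$ by uncountable regularity of $\lm$. For the ordinal-part condition, $M_\eta\cap\lm=\bigcup_k(N_k\cap\lm)$, which sits between $\sup_k\af_k$ (contained by construction in $M_\eta$) and the same supremum $\sup_k\af_k$ viewed as an upper bound; hence $M_\eta\cap\lm$ equals this ordinal, which lies in $\lm$ by regularity. The conditions $A\subseteq M_\eta$ and $(M_j)_{j<\eta}\in M_\eta$ are inherited from $N_0\subseteq M_\eta$, the latter giving retrospectivity at stage $\eta$. I do not expect any serious obstacle: the only mildly delicate step is ensuring $M_\eta\cap\lm$ is transitive, which is precisely what the ordinal-closure step in the recursion is designed to achieve, and which uses nothing beyond the regularity and uncountability of $\lm$.
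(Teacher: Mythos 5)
Your proposal is correct and takes essentially the same route as the paper: both build $M_\eta$ as the union of an $\omega$-chain $N_0\subseteq N_1\subseteq\cdots$ of elementary submodels of $\bigh$ of size less than $\lm$, seeding $N_0$ with $A$ and $(M_j)_{j<\eta}$, and at each step swallowing the ordinals below $\sup(N_k\cap\lm)$ to guarantee $M_\eta\cap\lm\in\lm$ in the limit. The only difference is that you spell out the preliminary checks (that the sequence lies in $\bigh$, the cardinality and cofinality arguments) which the paper leaves implicit.
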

\begin{proof}
Let $B=A\cup\{(M_i)_{i<\eta}\}$ and choose $M_\eta=\bigcup_{n<\om}N_n$
where $B\subseteq N_0$, $\card{N_n}<\lm$, $N_n\elemsub\bigh$,
and $N_n\cup\sup(\lm\cap N_n)\subseteq N_{n+1}$ for all $n$.
\end{proof}

\begin{lemma}\label{lappsorder}
Given $(M_i)_{i<\eta}$ as in the above definition and $\af,\beta<\eta$,
the following are equivalent.
 \enu
 \item\label{lo:ordinm} $\af\in\beta\cap M_\beta$
 \item\label{lo:minm} $M_\af\in M_\beta$
 \item\label{lo:msubm} $M_\af\subsetneq M_\beta$
 \une
\end{lemma}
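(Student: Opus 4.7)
The plan is to establish the cyclic chain $(\ref{lo:ordinm}) \Rightarrow (\ref{lo:minm}) \Rightarrow (\ref{lo:msubm}) \Rightarrow (\ref{lo:ordinm})$. A preliminary observation that drives everything: since the retrospective property supplies $(M_j)_{j<i} \in M_i$ and the ordinal $i$ is definable from this sequence as its domain, elementarity forces $i \in M_i$ for every $i < \eta$.

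For $(\ref{lo:ordinm}) \Rightarrow (\ref{lo:minm})$: assuming $\alpha \in \beta \cap M_\beta$, I would pull $(M_j)_{j<\beta}$ into $M_\beta$ by the retrospective property and evaluate at the index $\alpha \in M_\beta$. Elementarity then deposits the value $M_\alpha$ inside $M_\beta$.

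For $(\ref{lo:minm}) \Rightarrow (\ref{lo:msubm})$: non-equality comes free from foundation. For the inclusion $M_\alpha \subseteq M_\beta$, I would invoke the observation recorded in the text just after Lemma~\ref{ubiquity}, namely that the condition $M_\beta \cap \lm \in \lm$ makes $M_\beta$ closed under taking elements of any of its members of size $<\lm$; since $M_\alpha$ is $\lm$-approximating, $\card{M_\alpha} < \lm$, and applying this closure to $M_\alpha \in M_\beta$ gives $M_\alpha \subseteq M_\beta$.

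For $(\ref{lo:msubm}) \Rightarrow (\ref{lo:ordinm})$: the crux is showing $\alpha < \beta$; once that is in hand, the preliminary observation $\alpha \in M_\alpha$, combined with $M_\alpha \subsetneq M_\beta$, yields $\alpha \in M_\beta$. Strictness rules out $\alpha = \beta$. Suppose toward contradiction that $\alpha > \beta$. Then $(M_j)_{j<\alpha}$ is retrospective into $M_\alpha$, hence into $M_\beta$ by strict inclusion, while $\beta \in M_\beta$ by the preliminary observation. Elementarity then evaluates this sequence at $\beta$ and produces $M_\beta \in M_\beta$, contradicting foundation. I expect this last step to be the main obstacle: it is the only place where the retrospective hypothesis does nontrivial work, and the trick of converting strict inclusion plus a retrospective sequence into a foundation violation is what makes the equivalence tick.
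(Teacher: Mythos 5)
Your proof is correct, and the first two implications match the paper's essentially verbatim: $(\ref{lo:ordinm})\Rightarrow(\ref{lo:minm})$ pulls $(M_j)_{j<\beta}$ into $M_\beta$ and evaluates at $\af$ by elementarity, and $(\ref{lo:minm})\Rightarrow(\ref{lo:msubm})$ uses $\card{M_\af}<\lm$ together with $M_\beta\cap\lm\in\lm$ plus foundation. The genuine divergence is in how you rule out $\af>\beta$ in $(\ref{lo:msubm})\Rightarrow(\ref{lo:ordinm})$. You extract $(M_j)_{j<\af}$ from $M_\af\subsetneq M_\beta$, note $\beta\in M_\beta$, and evaluate at $\beta$ inside $M_\beta$ to produce $M_\beta\in M_\beta$, a direct foundation violation. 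The paper instead observes that $\af>\beta$ would force $M_\af\subseteq\bigcup_{\gamma<\af}M_\gamma$, and since that union belongs to $M_\af\elemsub\bigh$ while $\bigh$ is not contained in it, elementarity makes $M_\af$ ``see'' a point outside the union --- a size-and-elementarity contradiction. Your route is a bit more self-contained: it needs no cardinality bound on $\bigcup_{\gamma<\af}M_\gamma$ relative to $\bigh$, only absoluteness of function evaluation and foundation. The paper's route packages the same retrospective input through the elementarity of $M_\af$ rather than through a membership cycle; it is arguably more in the spirit of the other elementary-submodel arguments in the paper, but your version is equally rigorous and slightly leaner.
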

\begin{proof}
Given \eqref{lo:ordinm}, we have \eqref{lo:minm} 
because $M_\af$ is definable from
$\af$ and $(M_\gamma)_{\gamma<\beta}$.
Given \eqref{lo:minm}, we have $M_\af\subseteq M_\beta$ because
$|M_\af|\in M_\beta\cap\lm\in\lm$; 
we also have $M_\af\in M_\beta\setminus M_\af$.
Given \eqref{lo:msubm}, we have $\af\not=\beta$; 
we also have $\af\in M_\beta$ because 
$\af$ is definable in $M_\af$ from $(M_\gamma)_{\gamma<\af}$; 
we also have $\af\leq\beta$ because otherwise 
$M_\af\subseteq\bigcup_{\gamma<\af}M_\gamma$, which is 
impossible because $\bigh\not\subseteq\bigcup_{\gamma<\af}M_\gamma$ 
and $\bigcup_{\gamma<\af}M_\gamma\in M_\af\elemsub\bigh$.
\end{proof}

\begin{lemma}\label{cover0}
If $S\in M_0$ and $(M_i)_{i<\card{S}}$ is a \lapps{\lm},
then $S\subseteq \bigcup_{i<\card{S}}M_i$.
\end{lemma}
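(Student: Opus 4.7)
The plan is to enumerate $S$ using elementarity of $M_0$ and then invoke Lemma~\ref{lappsorder} to locate each ordinal index in the approximation sequence. Let $\kappa=\card{S}$; the case $\kappa=0$ is immediate, so I would assume $\kappa\geq 1$, whence $M_0$ exists.

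Since $S\in M_0$ and $\kappa$ is definable from $S$, I have $\kappa\in M_0$, and elementarity of $M_0\elemsub\bigh$ yields a bijection $f\colon\kappa\to S$ with $f\in M_0$. It then suffices to show that for every $\af<\kappa$ there is $i<\kappa$ with both $\af\in M_i$ and $f\in M_i$, since elementarity of $M_i$ will force $f(\af)\in M_i\subseteq\bigcup_{i<\kappa}M_i$.

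For the location step I would split on whether $\kappa$ is infinite or finite. If $\kappa$ is infinite, set $i=\af+1<\kappa$: Lemma~\ref{lappsorder} applied to the pair $\af<\af+1$ gives $M_\af\in M_{\af+1}$, hence $\af\in M_{\af+1}$; likewise $M_0\in M_{\af+1}$, and because $\card{M_0}<\lm$ the remark following the definition of \lapps{\lm} yields $M_0\subseteq M_{\af+1}$, so $f\in M_{\af+1}$. If $\kappa$ is a positive finite ordinal, take $i=0$: every $\af<\kappa<\om$ lies in $M_0$ (using $\om\leq M_0\cap\lm\in\lm$), and $f\in M_0$ by choice. I do not anticipate a real obstacle: once $f\in M_0$ is fixed, the retrospective structure of the sequence routinely forces each index into the required $M_i$ via Lemma~\ref{lappsorder}. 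The mildly delicate point is the finite case, where $\af+1$ might equal $\kappa$ and one must fall back on the fact that each natural number, and $f$ itself, already sits in $M_0$.
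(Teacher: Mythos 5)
Correct, and essentially the paper's approach: choose $f\in M_0$ enumerating $S$, then use elementarity and the retrospective structure to place each $f(\af)$ in some $M_i$. The paper avoids your finite/infinite case split by putting $f(\af)$ in $M_\af$ itself rather than $M_{\af+1}$: retrospectiveness gives $(M_j)_{j<\af}\in M_\af$, so $\af$ (the domain of that sequence) lies in $M_\af$, and $f\in M_0\subseteq M_\af$ by Lemma~\ref{lappsorder}, whence $f(\af)\in M_\af$ with no offset and no cases. Also note that Lemma~\ref{lappsorder} only asserts an equivalence; to ``get $M_\af\in M_{\af+1}$'' you must first supply one of its three conditions, e.g.\ observe from retrospectiveness that $(M_j)_{j<\af+1}\in M_{\af+1}$, hence $\af+1$ and thus $\af$ lie in $M_{\af+1}$ -- citing the lemma to produce $M_\af\in M_{\af+1}$ and then inferring $\af\in M_{\af+1}$ from it is circular as written, though easily repaired.
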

\begin{proof}
Some $f\in M_0$ is a surjection from $\card{S}$ to $S$.
By Lemma~\ref{lappsorder}, $M_0\subsetneq M_\af$ for all $\af>0$, 
so $f(\af)\in M_\af$ for all $\af<\card{S}$.
\end{proof}

\begin{definition}
Given an ordinal $\af$ and an infinite cardinal $\lm$,
let the \emph{$\lm$-truncated cardinal normal form of $\af$}
denote the unique polynomial 
$$\om_{\beta_0}\gamma_0+\cdots+\om_{\beta_{m-1}}\gamma_{m-1}+\gamma_m$$
equal to $\af$ and satisfying $\om_{\beta_0}>\cdots>\om_{\beta_{m-1}}\geq\lm$, 
$\gamma_i\in[1,\om_{\beta_i}^+)$ for all $i<m$, and $\gamma_m<\lm$.
For each $i<m$, let $\cardterm{i}{\af}$ denote $\om_{\beta_i}\gamma_i$
and let $\cardhead{i+1}{\af}$ denote $\sum_{j<i+1}\cardterm{j}{\af}$; 
let $\cardhead{0}{\af}=0$ and $\cardhead{m+1}{\af}=\af$.
Let $\daleth(\af)$ denote $m$ if $\gamma_m=0$; $m+1$ if $\gamma_m>0$.
\end{definition}

Observe that $\cardhead{i}{\af}$ is $\{\af\}$-definable 
in $H(\card{\af}^+)$ for each $i\leq\daleth(\af)$. Hence, 
if $(M_\beta)_{\beta<\af}$ is a \lapps{\lm}, $\zeta+\eta\leq\af$, 
and $\cardhead{\daleth(\zeta)}{\zeta+\beta}=\zeta$ 
for all $\beta<\eta$, then, for each $\beta<\eta$,  
$(M_{\zeta+\gamma})_{\gamma<\beta}$ is definable in $M_{\zeta+\beta}$. 
Thus, such an $(M_{\zeta+\beta})_{\beta<\eta}$ is a \lapps{\lm}.
We will use this last observation to prove the fundamental lemma 
for \lapps{\lm}s, which is the existence
of a definable finite partition into directed segments.

\begin{lemma}\label{lappsfund}
Given a \lapps{\lm}\ $(M_\beta)_{\beta<\af}$,
the sets $\{M_\beta:\cardhead{i}{\af}\leq\beta<\cardhead{i+1}{\af}\}$ 
are directed with respect to inclusion for all $i<\daleth(\af)$.
\end{lemma}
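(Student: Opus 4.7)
The plan is to treat each interval $[\cardhead{i}{\af},\cardhead{i+1}{\af})$ separately: after shifting by $\zeta=\cardhead{i}{\af}$, the observation preceding the lemma makes $(N_\gamma)_{\gamma<\eta}$ given by $N_\gamma=M_{\zeta+\gamma}$ into a \lapps{\lm}\ of length $\eta$, where $\eta=\cardterm{i}{\af}$ if $i<m$ and $\eta=\gamma_m$ if $i=m$. Directedness of the original family is then equivalent to directedness of this sub-sequence.

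For the tail interval (the case $i=m$ with $\gamma_m>0$, so $\eta<\lm$), I would show that the sub-\lapps{\lm}\ is in fact totally ordered by $\subsetneq$. For each $\beta<\eta$ the retrospective sequence $(N_\delta)_{\delta<\beta}$ lies in $N_\beta$ and has cardinality below $\lm$, so the $\lm$-approximating property of $N_\beta$ forces it to lie in $N_\beta$ as a subset. Hence $N_\delta\in N_\beta$ for all $\delta<\beta$, and Lemma~\ref{lappsorder} upgrades this to $N_\delta\subsetneq N_\beta$.

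For a cardinal interval ($i<m$, so $\eta=\om_\xi\rho$ with $\om_\xi=\om_{\beta_i}\geq\lm$ and $\rho=\gamma_i<\om_\xi^+$), I would argue by induction on $\om_\xi$. Given $\beta_1<\beta_2<\om_\xi\rho$, decompose $\beta_j=\om_\xi\tau_j+\delta_j$ with $\tau_j<\rho$ and $\delta_j<\om_\xi$, set $\tau_3=\max(\tau_1,\tau_2)$, and look for $\beta_3=\om_\xi\tau_3+\delta_3$ with a suitable $\delta_3<\om_\xi$. A second application of the shift observation (with $\zeta'=\om_\xi\tau_3$) turns $(N_{\zeta'+\gamma})_{\gamma<\om_\xi}$ into a \lapps{\lm}\ of length $\om_\xi$. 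In the base case $\om_\xi=\lm$, the bound $\rho<\lm^+$ gives $\rho\leq\lm$ and hence $\tau_j<\lm$; choosing $\delta_3<\lm$ strictly above $\tau_1,\tau_2,\delta_1,\delta_2$ forces $\delta_3\in N_{\beta_3}$ by retrospectiveness, and then $\card{\delta_3}<\lm$ pulls $\delta_3$ into $N_{\beta_3}$ as a subset, sweeping $\tau_1,\tau_2,\delta_1,\delta_2$ along with it. Since $\lm$ is $\{\beta_3\}$-definable (as the largest cardinal at most $\beta_3$, using $\beta_3<\lm^+$) and ordinal arithmetic is definable, elementarity places $\beta_1,\beta_2$ into $N_{\beta_3}$, so $M_{\beta_j}\subseteq M_{\beta_3}$ by Lemma~\ref{lappsorder}.

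The main obstacle is the inductive step $\om_\xi>\lm$, where $\delta_j<\om_\xi$ may itself be large and the argument above fails to pin $\delta_j$ inside $N_{\beta_3}$. When $\beta_2<\om_\xi$ this is handled by a direct reduction: the leading cardinal $\om_{\xi'}$ of $\beta_2$'s $\lm$-truncated normal form is strictly below $\om_\xi$, both $\beta_1,\beta_2$ lie in the truncated sub-\lapps{\lm}\ $(N_\gamma)_{\gamma<\om_{\xi'}(\tau_0+1)}$ (where $\tau_0$ is the coefficient of the leading term of $\beta_2$), and this sub-sequence has strictly smaller cardinal base, so the inductive hypothesis supplies $\beta_3$. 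The remaining ``cross-block'' subcase $\beta_2\geq\om_\xi$ should be handled by a parallel induction on $\rho$, iterating the shift observation along the $\lm$-truncated normal forms of $\beta_1$ and $\beta_2$ in turn to locate all the required ordinal parameters inside $N_{\beta_3}$.
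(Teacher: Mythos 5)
The overall architecture of your proposal (shift to each interval via the observation preceding the lemma, treat the short tail as a chain, induct on the leading cardinal for the cardinal blocks) is consistent with the paper's induction on $\af$, and your tail and same-block arguments are sound. However there are two genuine problems.

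First, the base case $\om_\xi=\lm$ contains a false step: ``$\rho<\lm^+$ gives $\rho\leq\lm$.'' An ordinal below $\lm^+$ need not be $\leq\lm$ (take $\rho=\lm+1$). When $\rho>\lm$, the quotients $\tau_j=\cardfloor{\beta_j/\lm}$ can themselves exceed $\lm$, so no choice of $\delta_3<\lm$ can ``sweep'' $\tau_1$ into $N_{\beta_3}\cap\lm$, and the definability-via-ordinal-division route also fails since $\tau_1$ is not definable from $\tau_3$ and $\lm$ in general. So your base case only handles the same-$\lm$-block situation (or $\rho\leq\lm$), not the full block $[0,\lm\rho)$.

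Second, and more fundamentally, the cross-block case (already in the base case as noted, and certainly for $\om_\xi>\lm$) is left as a gesture --- ``iterating the shift observation along the normal forms to locate all the required ordinal parameters inside $N_{\beta_3}$.'' There is no way to do this by local definability: the model $N_{\beta_3}$ has size $<\lm$, so its intersection with the earlier block $[0,\om_\xi\tau_3)$ is a tiny set and there is no reason for an arbitrary $\beta_1$ in the earlier block to land in it. What actually closes this gap is Lemma~\ref{cover0} (the covering lemma), which is the heart of the paper's proof and is entirely absent from your sketch. The paper reduces the non-chain cardinal case to $\af=\ka(\gamma+1)$, sets $\beta=\ka\gamma$, and uses $S=\{M_\delta:\delta<\beta\}\in M_\beta$ together with Lemma~\ref{cover0} (applied to the shifted \lapps{\lm}\ $(M_{\beta+\delta})_{\delta<\ka}$) to conclude $S\subseteq\bigcup_{\delta<\ka}M_{\beta+\delta}$. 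Combined with Lemma~\ref{lappsorder}, this makes $\{M_{\beta+\delta}:\delta<\ka\}$ a cofinal subset of $\{M_\delta:\delta<\af\}$, which is directed by the inner induction on $\ka$; a directed cofinal subset makes the whole family directed. The limit case $\af=\sup\{\beta<\af:\daleth(\beta)=1\}$ is disposed of first by taking increasing unions. You should incorporate the covering lemma; without it the cross-block step cannot be completed by the approach you outline.
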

\begin{proof}
A proof is implicit in the proof of Lemma 3.17 of~\cite{nthomog},
but we will provide a shorter explicit proof here.
Proceed by induction on $\af$.
If $\af\leq\lm$, then $\{M_\beta:\beta<\af\}$ is a chain.
If $\daleth(\af)\geq 2$, then each
$\{M_\beta:\cardhead{i}{\af}\leq\beta<\cardhead{i+1}{\af}\}$
is directed by our induction hypothesis applied to
$(M_{\cardhead{i}{\af}+\beta})_{\beta<\cardterm{i}{\af}}$.
So, suppose that $\af>\lm$ and $\daleth(\af)=1$.
If $\af=\sup\{\beta<\af:\daleth(\beta)=1\}$, then
$\{M_\gamma:\gamma<\af\}$ is directed because each
$\{M_\gamma:\gamma<\beta\}$ is directed by induction.
So, suppose that $\af=\ka(\gamma+1)$ where
$\ka$ is a cardinal and $1\leq\gamma<\ka^+$.
Set $\beta=\ka\gamma$ and $S=\{M_\delta:\delta<\beta\}$. 
By Lemma~\ref{cover0} applied to $S$ and $(M_{\beta+\delta})_{\delta<\ka}$,
we have $S\subset\bigcup_{\delta<\ka}M_{\beta+\delta}$. Hence,
by Lemma~\ref{lappsorder}, for every $\eps<\beta$ there
exists $\delta<\ka$ such that $M_\eps\subsetneq M_{\beta+\delta}$.
Therefore, $\{M_\delta:\delta<\af\}$ is directed because
its cofinal subset $\{M_{\beta+\delta}:\delta<\ka\}$ 
is directed by our inductive hypothesis applied to
$(M_{\beta+\delta})_{\delta<\ka}$.
\end{proof}

If $n<\om$ and $(M_\af)_{\af<\lm^{+n}}$ is a \lapps{\lm}, then,
since $\daleth(\af)\leq n+1$ for all $\af<\lm^{+n}$, we
can sometimes use $(M_\af)_{\af<\lm^{+n}}$ like a $(\lm,n)$-morass, 
in the weak sense that we can build a $\lm^{+n}$-sized object 
as a direct limit of small (that is, $(<\lm)$-sized) pieces 
while locally only having to fit together at most $n+1$ 
small direct limits of these small pieces.
Of course, we lack the additional coherence properties of
a $(\lm,n)$-morass, which require assumptions beyond ZFC.
However, the citations given at the beginning of this
section demonstrate that \lapps{\lm}\ are useful
even without such coherence. We will find them useful 
again in this paper. See also~\cite{dsoukup}
for very recent additional applications, noting that
there \lapps{\oml}s are called Davies sequences.

We finish this section with some additional lemmas
about \lapps{\lm}s that we will need later.

\begin{definition}
Given a \lapps{\lm}\ $(M_\beta)_{\beta<\eta}$, $\af\leq\eta$, and
$i<\daleth(\af)$, let
\itz
\item $I_i(\af)=[\cardhead{i}{\af},\cardhead{i+1}{\af})$;
\item $I'_i(\af)=I_i(\af)\cap M_\af$;
\item $\mc{I}_i(\af)=\{M_\beta: \beta\in I_i(\af)\}$;
\item $\mc{I}'_i(\af)=\{M_\beta: \beta\in I'_i(\af)\}$;
\item $M_{\af,i}=\bigcup\mc{I}_i(\af)$;
\item $M'_{\af,i}=M_{\af,i}\cap M_\af$.
\zti
\end{definition}

\begin{lemma}\label{lappscard}
If $(M_\af)_{\af<\eta}$ is a \lapps{\lm}, $i<\daleth(\eta)$, and 
$\cardterm{i}{\eta}\geq\lm$, then 
$\card{M_{\eta,i}}=\cardterm{i}{\eta}\subseteq M_{\eta,i}$.
\end{lemma}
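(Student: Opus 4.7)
The plan is to establish the containment $\cardterm{i}{\eta}\subseteq M_{\eta,i}$ ordinalwise, then derive the cardinality equality by a counting argument. Write $\zeta=\cardhead{i}{\eta}$ and $\cardterm{i}{\eta}=\om_{\beta_i}\gamma_i$, so that $I_i(\eta)=[\zeta,\zeta+\cardterm{i}{\eta})$ and $M_{\eta,i}=\bigcup_{\delta<\cardterm{i}{\eta}}M_{\zeta+\delta}$. The central claim I want to verify is that each ordinal $\delta<\cardterm{i}{\eta}$ lies in the set $M_{\zeta+\delta}$, hence in $M_{\eta,i}$.

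To see this, fix $\delta<\cardterm{i}{\eta}$. Retrospectiveness puts $(M_\gamma)_{\gamma<\zeta+\delta}$ in $M_{\zeta+\delta}$, and then elementarity places its domain $\zeta+\delta$ in $M_{\zeta+\delta}$ as well. Next I would invoke the normal-form identity $\cardhead{i}{\zeta+\delta}=\zeta$. This is vacuous for $i=0$ (where $\zeta=0$), and for $i\geq 1$ it follows from the chain $\cardterm{i}{\eta}<\om_{\beta_i}^+\leq\om_{\beta_{i-1}}$ built into the definition of the $\lm$-truncated cardinal normal form: adding $\delta<\om_{\beta_{i-1}}$ to $\zeta$ only appends strictly smaller-cardinal terms and leaves the first $i$ terms of the normal form undisturbed. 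Now $\cardhead{i}{\cdot}$ is definable from its ordinal argument together with $i$ and $\lm$; the index $i<\om$ is automatically in $M_{\zeta+\delta}$, and when $i\geq 1$ we have $\zeta\geq\om_{\beta_0}\geq\lm$, so $\lm$ is definable in $M_{\zeta+\delta}$ by the parenthetical remark following the definition of \lapps{\lm}s (the case $i=0$ trivializes the whole computation, since $\zeta=0$ is in $M_{\zeta+\delta}$ without any work). Consequently $\zeta\in M_{\zeta+\delta}$, and since $\delta$ is the unique ordinal $\eps$ with $\zeta+\eps=\zeta+\delta$, elementarity yields $\delta\in M_{\zeta+\delta}\subseteq M_{\eta,i}$.

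The cardinality equality then drops out: the containment $\cardterm{i}{\eta}\subseteq M_{\eta,i}$ gives $\card{M_{\eta,i}}\geq\aleph_{\beta_i}=\card{\cardterm{i}{\eta}}$, while $\card{M_{\eta,i}}\leq\card{I_i(\eta)}\cdot\lm=\aleph_{\beta_i}\cdot\lm=\aleph_{\beta_i}$ since $\card{I_i(\eta)}=\aleph_{\beta_i}\geq\lm$ and every $\card{M_\beta}<\lm$. The only real friction in the argument is the normal-form bookkeeping underlying $\cardhead{i}{\zeta+\delta}=\zeta$; everything else is a routine deployment of retrospectiveness and elementarity.
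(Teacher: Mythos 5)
Your proposal is correct and follows essentially the same route as the paper: the paper's proof simply cites the observation preceding Lemma~\ref{lappsfund} to conclude that $(M_{\cardhead{i}{\eta}+\af})_{\af<\cardterm{i}{\eta}}$ is itself a \lapps{\lm}\ (whence $\cardterm{i}{\eta}\subseteq M_{\eta,i}$, since each index of a retrospective sequence lies in the corresponding model), and then closes with the same cardinality count; you are unpacking that cited observation into an explicit argument that $\zeta=\cardhead{i}{\zeta+\delta}$ is definable in $M_{\zeta+\delta}$ and hence $\delta\in M_{\zeta+\delta}$. The normal-form bookkeeping you carry out is exactly what the paper's observation is compressing, so the two proofs coincide in substance.
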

\begin{proof}
Since $(M_{\cardhead{i}{\eta}+\af})_{\af<\cardterm{i}{\eta}}$ is a \lapps{\lm}, 
we have $\cardterm{i}{\eta}\subseteq M_{\eta,i}$. 
Since each $M_{\cardhead{i}{\eta}+\af}$ is smaller than $\lm$, 
we have $\cardterm{i}{\eta}=\card{M_{\eta,i}}$.
\end{proof}

\begin{lemma}\label{retro0}
If $(M_\af)_{\af<\eta}$ is a \lapps{\lm}\ and $S\in M_0$, 
then, for all $\af<\eta$, $S\in M_\af$ and
$S\in M_{\af,i}$ for all $i<\daleth(\af)$.
\end{lemma}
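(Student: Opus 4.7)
The plan is to reduce both conclusions to the monotonicity given by Lemma~\ref{lappsorder}, which in particular yields $M_0 \subsetneq M_\beta$ whenever $0 < \beta$. Granted that, the first assertion ``$S \in M_\af$ for all $\af<\eta$'' is immediate: if $\af=0$ it is the hypothesis, and if $\af>0$ then $S\in M_0\subseteq M_\af$.

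For the second assertion, I would argue that, for each $i<\daleth(\af)$, the left endpoint $\beta_* := \cardhead{i}{\af}$ lies in the index set $I_i(\af) = [\cardhead{i}{\af},\cardhead{i+1}{\af})$. To see $\beta_*<\cardhead{i+1}{\af}$, one checks from the definition that $\cardterm{i}{\af}\geq 1$ for every $i<\daleth(\af)$: when $i$ is one of the ``large'' indices $<m$ this is $\om_{\beta_i}\gamma_i\geq\lm$, and when $i=m$ (only relevant if $\daleth(\af)=m+1$) one has $\gamma_m\geq 1$ by definition of $\daleth$. Hence $M_{\beta_*}\in\mc{I}_i(\af)$, and consequently $M_{\beta_*}\subseteq M_{\af,i}$. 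Applying the first assertion with $\af$ replaced by $\beta_*$ gives $S\in M_{\beta_*}\subseteq M_{\af,i}$, as required.

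There is no real obstacle here: the only thing to watch is the degenerate case $\af=0$, in which $\daleth(0)=0$ so the second clause is vacuous, and the case $\beta_*=0$, in which $S\in M_0=M_{\beta_*}$ directly. In short, I would present the lemma as a one-paragraph proof that first invokes Lemma~\ref{lappsorder} to get $S\in M_\af$, and then picks out the specific element $M_{\cardhead{i}{\af}}$ of $\mc{I}_i(\af)$ and invokes Lemma~\ref{lappsorder} once more to place $S$ inside it.
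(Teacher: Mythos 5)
Your argument is correct and follows essentially the same route as the paper: invoke Lemma~\ref{lappsorder} to get $M_0\subseteq M_\beta$ for every $\beta>0$, giving $S\in M_\af$ directly, and then observe that $M_0\subseteq M_{\cardhead{i}{\af}}\subseteq M_{\af,i}$. The paper compresses the second step into a single ``hence'' ($M_0\subseteq\bigcup_{\beta\in I_i(\af)}M_\beta$), while you spell out the witness $\cardhead{i}{\af}\in I_i(\af)$ and check nonemptiness of $I_i(\af)$ via $\cardterm{i}{\af}\geq 1$; the extra detail is accurate but not a different proof.
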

\begin{proof}
By Lemma~\ref{lappsorder}, $M_0\subseteq M_\af$ for all
$\af<\eta$. Hence, also $M_0\subseteq\bigcup_{\beta\in I_i(\af)}M_\beta$
for all $\af<\eta$ and $i<\daleth(\af)$.
\end{proof}

\begin{definition}
Because $\mc{I}_i(\af)$ and $\mc{I}'_i(\af)$ may not
be downward closed in $\{M_\beta:\beta<\eta\}$ with respect to 
inclusion, we also define
\itz
\item $J_i(\af)=\bigcup\{M_\beta\cap(\beta+1):\beta\in I_i(\af)\}$;
\item $J'_i(\af)=\bigcup\{M_\beta\cap(\beta+1):\beta\in I'_i(\af)\}$;
\item $\mc{J}_i(\af)=\{M_\beta: \beta\in J_i(\af)\}$;
\item $\mc{J}'_i(\af)=\{M_\beta: \beta\in J'_i(\af)\}$.
\zti
\end{definition}
By Lemma~\ref{lappsorder}, 
$\mc{J}_i(\af)$ and $\mc{J}'_i(\af)$ are downward closed in
$\{M_\beta:\beta<\eta\}$ with respect to inclusion.
Also observe that, by elementarity and retrospectiveness,
\itz
\item $\mc{I}'_i(\af)=\mc{I}_i(\af)\cap M_\af$;
\item $J'_i(\af)=J_i(\af)\cap M_\af$;
\item $\mc{J}'_i(\af)=\mc{J}_i(\af)\cap M_\af$.
\zti

\begin{lemma}\label{lappsref}
Given a \lapps{\lm}\ $(M_\beta)_{\beta<\af+1}$ and $i<\daleth(\af)$,
$\mc{I}_i(\af)$ and $\mc{I}'_i(\af)$ are directed 
with respect to inclusion 
with respective unions $M_{\af,i}$ and $M'_{\af,i}$.
Moreover, $\mc{I}_i(\af)$ is cofinal in $\mc{J}_i(\af)$
and $\mc{I}'_i(\af)$ is cofinal in $\mc{J}'_i(\af)$.
\end{lemma}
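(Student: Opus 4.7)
The plan is to dispatch the four claims in the stated order, using Lemma~\ref{lappsfund} for the unprimed directedness and then transferring to the primed version by elementarity. The entire argument hinges on the retrospective observation that $(M_\beta)_{\beta<\af}\in M_\af$, whence $\af\in M_\af$ and therefore both endpoints $\cardhead{i}{\af}$ and $\cardhead{i+1}{\af}$ belong to $M_\af$ as $\{\af\}$-definable objects; the same parameter also gives us the whole initial segment $(M_\gamma)_{\gamma<\af}$ as a legitimate parameter for elementarity.

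First I would note that directedness of $\mc{I}_i(\af)$ with respect to inclusion is exactly Lemma~\ref{lappsfund} applied to $(M_\beta)_{\beta<\af}$, and $\bigcup\mc{I}_i(\af)=M_{\af,i}$ by definition.

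For $\mc{I}'_i(\af)$, given $\beta_1,\beta_2\in I'_i(\af)$, the statement that some $\beta\in I_i(\af)$ satisfies $M_{\beta_1}\cup M_{\beta_2}\subseteq M_\beta$ is first-order in the parameters $\beta_1,\beta_2,\af,(M_\gamma)_{\gamma<\af}$, all of which lie in $M_\af$; since the statement is true in $\bigh$ by the previous paragraph, elementarity yields a witness $\beta\in M_\af\cap I_i(\af)=I'_i(\af)$, proving directedness. For the union equality $\bigcup\mc{I}'_i(\af)=M'_{\af,i}$, the $\subseteq$ direction uses the standard fact that $\card{M_\beta}\in M_\af\cap\lm\in\lm$ forces $M_\beta\subseteq M_\af$ whenever $\beta\in M_\af$; the reverse direction reflects the statement ``some $\beta\in I_i(\af)$ has $x\in M_\beta$'' from $\bigh$ into $M_\af$, using that $x\in M'_{\af,i}$ is in $M_\af$.

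Finally, the two cofinality claims reduce to Lemma~\ref{lappsorder}. Given $\gamma\in J_i(\af)$, fix $\beta\in I_i(\af)$ with $\gamma\in M_\beta\cap(\beta+1)$; then either $\gamma=\beta$ or $\gamma<\beta$ with $\gamma\in M_\beta$, and Lemma~\ref{lappsorder} gives $M_\gamma\subseteq M_\beta$, so $M_\beta\in\mc{I}_i(\af)$ witnesses cofinality. For the primed variant, if $\gamma\in J'_i(\af)=J_i(\af)\cap M_\af$ then the witnessing $\beta$ can be chosen inside $M_\af$ by the same elementarity argument as before. The only non-routine point in the proof is careful tracking of which parameters lie in $M_\af$; no genuine obstacle arises.
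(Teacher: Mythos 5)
Your proposal is correct and follows essentially the same route as the paper: directedness of $\mc{I}_i(\af)$ from Lemma~\ref{lappsfund}, the primed version by reflecting through $M_\af\elemsub\bigh$ with $(M_\beta)_{\beta<\af}\in M_\af$ as parameter, and cofinality from Lemma~\ref{lappsorder}. The paper's proof is just a more compressed version of the same argument, leaving the elementarity bookkeeping implicit.
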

\begin{proof}
By Lemma~\ref{lappsfund}, $\mc{I}_i(\af)$ is directed;
by definition, its union is $M_{\af,i}$.
Since $(M_\beta:\beta\in I_i(\af))\in M_\af\elemsub\bigh$, 
the set $\mc{I}_i(\af)\cap M_\af$ is directed with union 
$M_{\af,i}\cap M_\af$.
Having thus proved the first sentence of the lemma,
the second sentence immediately follows from Lemma~\ref{lappsorder}.
\end{proof}

\begin{lemma}\label{strata}
Given a \lapps{\lm}\ $(M_\beta)_{\beta<\af+1}$, we have 
\itz
\item 
$\bigcup_{\beta<\af}M_\beta=\bigcup_{i<\daleth(\af)}M_{\af,i}$,
\item 
$M_\af\cap\bigcup_{\beta<\af}M_\beta=\bigcup_{i<\daleth(\af)}M'_{\af,i}$, 
and 
\item 
$M_{\af,i},M'_{\af,i}\elemsub\bigh$ for all $i<\daleth(\af)$.
\zti
\end{lemma}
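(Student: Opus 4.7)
The plan is to treat the lemma as a light application of the previous machinery rather than anything substantive: the first two bullets are bookkeeping about the partition $[0,\af)=\bigsqcup_{i<\daleth(\af)}I_i(\af)$, and the third reduces to the standard elementary-chain fact.

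For the first bullet, I would begin by observing that the intervals $I_i(\af)=[\cardhead{i}{\af},\cardhead{i+1}{\af})$ for $i<\daleth(\af)$ partition $[0,\af)$; this is immediate from the definition of the $\lm$-truncated cardinal normal form, since the $\cardterm{i}{\af}$ sum to $\af$. Splitting $\bigcup_{\beta<\af}M_\beta$ along this partition and recognizing each inner union as $M_{\af,i}=\bigcup\mc{I}_i(\af)$ gives the first equality. For the second, I would intersect both sides of the first with $M_\af$ and distribute intersection across union, using the definitional identity $M'_{\af,i}=M_{\af,i}\cap M_\af$.

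For the third bullet, I would invoke Lemma~\ref{lappsref} to get that $\mc{I}_i(\af)$ and $\mc{I}'_i(\af)$ are directed families (under inclusion) with unions $M_{\af,i}$ and $M'_{\af,i}$; each member is elementary in $\bigh$ by the definition of $\lm$-approximating. Then I would cite the standard fact that a directed union of elementary submodels of a common structure is itself an elementary submodel, verified via Tarski--Vaught: any finitely many parameters lie in some common member of the directed family by directedness, and that member supplies the needed witnesses to existential formulas. To be safe, I would also note nonemptiness of these families: $\cardhead{i}{\af}$ sits in $I_i(\af)$ as its left endpoint (the interval is nondegenerate since $\cardterm{i}{\af}\geq 1$), and lies in $M_\af$ because $\af\in M_\af$ and the normal form is definable from $\af$ (and $\lm$, which for $\af\geq\lm$ is recoverable from the retrospective sequence as noted after the definition of \lapps{\lm}). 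The only mildly non-obvious point is this nonemptiness check; the rest is straightforward assembly from prior lemmas.
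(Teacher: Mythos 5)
Your proposal is correct and follows essentially the same route as the paper: the first two bullets come from the partition of $[0,\af)$ by the intervals $I_i(\af)$ (and intersecting with $M_\af$), and the third from Lemma~\ref{lappsref} plus the standard fact that a directed union of elementary submodels of $\bigh$ is again an elementary submodel. The explicit nonemptiness check for $\mc{I}'_i(\af)$ is left implicit in the paper's proof, so flagging it is a small but worthwhile addition.
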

\begin{proof}
Clearly, $\af=\bigcup_{i<\daleth(\af)}I_i(\af)$;
the two equations of the lemma immediately follow.
By Lemmas~\ref{lappsfund} and \ref{lappsref}, 
each $M_{\af,i}$ and each $M'_{\af,i}$ is a directed union
of elementary substructures of $\bigh$, so 
$M_{\af,i},M'_{\af,i}\elemsub\bigh$.
\end{proof}

\begin{lemma}\label{lappsindep}
Given a \lapps{\lm}\ $(M_\beta)_{\beta<\af+1}$ and $i<\daleth(\af)$,
we have $I'_i(\af)\not\subseteq\bigcup_{j\not=i}J_j(\af)$.
\end{lemma}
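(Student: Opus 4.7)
The plan is to reduce the non-inclusion question to a first-order existence statement inside $M_\af$ and then apply elementarity. For any candidate $\beta\in I_i(\af)$ and any $j<i$, every $\delta\in I_j(\af)$ satisfies $\delta+1\leq\cardhead{j+1}{\af}\leq\cardhead{i}{\af}\leq\beta$, so $\beta\notin\delta+1$ and hence $\beta\notin J_j(\af)$ is automatic. For $j>i$, every $\delta\in I_j(\af)$ satisfies $\delta\geq\cardhead{i+1}{\af}>\beta$, so $\beta\leq\delta$ is automatic and membership $\beta\in J_j(\af)$ collapses to $\beta\in M_\delta$ for some $\delta\in I_j(\af)$, i.e., $\beta\in M_{\af,j}$. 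Setting $U:=\bigcup_{i<j<\daleth(\af)}M_{\af,j}$, the problem reduces to producing $\beta\in I_i(\af)\cap M_\af$ with $\beta\notin U$.

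Next I would verify $U\in M_\af$. Since $(M_\delta)_{\delta<\af}\in M_\af$ by retrospectiveness and each $I_j(\af)$ is $\{\af\}$-definable, each $M_{\af,j}=\bigcup_{\delta\in I_j(\af)}M_\delta$ is a $\{\af\}$-definable element of $\bigh$, so it lies in $M_\af$ by elementarity; finiteness of $\daleth(\af)$ then gives $U\in M_\af$. If $i=\daleth(\af)-1$, then $U=\varnothing$ and $\cardhead{i}{\af}\in I'_i(\af)$ serves as an immediate witness. Otherwise I would compare cardinalities: by Lemma~\ref{lappscard}, $\card{M_{\af,j}}=\cardterm{j}{\af}$ whenever $\cardterm{j}{\af}\geq\lm$, while a short regular-cardinal count gives $\card{M_{\af,m}}<\lm$ in the small-tail case; meanwhile $\card{I_i(\af)}=\aleph_{\beta_i}$. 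The strictly decreasing exponents $\om_{\beta_0}>\cdots>\om_{\beta_{m-1}}\geq\lm$ push each $\card{M_{\af,j}}$ for $j>i$ strictly below $\aleph_{\beta_i}$, and since $\daleth(\af)$ is finite, $\card{U}<\card{I_i(\af)}$.

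To close, the first-order statement ``$I_i(\af)\setminus U\neq\varnothing$'' has parameters in $M_\af$ and holds in $\bigh$ by the strict inequality, so elementarity produces a witness $\beta\in M_\af$; this $\beta$ lies in $I'_i(\af)$ and by construction avoids every $J_j(\af)$ with $j\neq i$. The fussiest piece of the argument is the cardinality bookkeeping for $\card{U}<\card{I_i(\af)}$, most delicately when $\daleth(\af)=m+1$ and $i=m-1$, because then the small tail $M_{\af,m}$ and the interval $I_{m-1}(\af)$ have comparable cardinalities; regularity of $\lm$ and the normal-form condition $\om_{\beta_{m-1}}\geq\lm$ are exactly what give the needed strict inequality there.
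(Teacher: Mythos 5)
Your proof is correct and follows essentially the same route as the paper's: a cardinality count (using regularity of $\lm$ and the strictly decreasing block sizes) to show $\card{I_i(\af)}$ exceeds $\card{\bigcup_{j>i}J_j(\af)}$, definability/elementarity to pull a witness into $M_\af$, and the observation that $j<i$ is ruled out automatically by ordinal position. The paper is terser, simply taking $\beta=\min\bigl(I_i(\af)\setminus\bigcup_{j>i}J_j(\af)\bigr)$ as a definable witness rather than reflecting an existence statement, which avoids your separate verification that $U\in M_\af$, but the difference is cosmetic.
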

\begin{proof}
Since $\lm$ is regular and 
$|I_j(\af)|>|I_k(\af)|$ for all $j<k<\daleth(\af)$,
we have $|I_i(\af)|>\sum_{i<j<\daleth(\af)}|J_j(\af)|$.
Let $\beta=\min\left(I_i(\af)\setminus
\bigcup_{i<j<\daleth(\af)}J_j(\af)\right)$,
which is definable in $M_\af$ and thus in 
$I'_i(\af)\setminus\bigcup_{i<j<\daleth(\af)}J_j(\af)$.
Since $\beta\geq\cardhead{i}{\af}$, we also have
$\beta\not\in\bigcup_{j<i}J_j(\af)$. 
\end{proof}

\begin{definition}
Given a \lapps{\lm}\ $(M_\beta)_{\beta<\af+1}$ and 
nonempty $s\subseteq\daleth(\af)$, let
\itz
\item $K_s(\af)=\bigcap_{i\in s}J_i(\af)$;
\item $K'_s(\af)=\bigcap_{i\in s}J'_i(\af)$;
\item $\mc{K}_s(\af)=\bigcap_{i\in s}\mc{J}_i(\af)$;
\item $\mc{K}'_s(\af)=\bigcap_{i\in s}\mc{J}'_i(\af)$.
\zti
\end{definition}
Observe that, by elementarity and retrospectiveness,
\itz
\item $K'_s(\af)=K_s(\af)\cap M_\af$;
\item $\mc{K}'_s(\af)=\mc{K}_s(\af)\cap M_\af$;
\item $\mc{K}_s(\af)=\{M_\beta:\beta\in K_s(\af)\}$;
\item $\mc{K}'_s(\af)=\{M_\beta:\beta\in K'_s(\af)\}$.
\zti

\begin{lemma}\label{lappsint}
Given a \lapps{\lm}\ $(M_\beta)_{\beta<\af+1}$ and 
nonempty $s\subseteq\daleth(\af)$,
the sets $\mc{K}_s(\af)$ and $\mc{K}'_s(\af)$
are directed with respect to inclusion.
\end{lemma}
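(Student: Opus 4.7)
The plan is to prove by induction on $|s|$ the stronger statement that, for any $\epsilon_1,\epsilon_2\in K_s(\alpha)$, there exists $\epsilon\in I_{i_0}(\alpha)\cap\bigcap_{i\in s\setminus\{i_0\}}M_{\alpha,i}$ with $M_{\epsilon_1},M_{\epsilon_2}\subseteq M_\epsilon$, where $i_0=\min s$. Such an $\epsilon$ witnesses directedness of $\mc{K}_s(\alpha)$, because it lies in $K_s(\alpha)=\bigcap_{i\in s}J_i(\alpha)$: for $i=i_0$ we have $\epsilon\in I_{i_0}(\alpha)\subseteq J_{i_0}(\alpha)$, and for $i>i_0$ in $s$, $\epsilon\in M_{\alpha,i}$ yields some $\gamma\in I_i(\alpha)$ with $\epsilon\in M_\gamma$, and since $\epsilon<\cardhead{i_0+1}{\alpha}\leq\cardhead{i}{\alpha}\leq\gamma$, we get $\epsilon\in M_\gamma\cap(\gamma+1)\subseteq J_i(\alpha)$.

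The base case $|s|=1$ is immediate from Lemma~\ref{lappsref}, which provides directedness of $\mc{I}_{i_0}(\alpha)$ together with its cofinality in $\mc{J}_{i_0}(\alpha)$. For the inductive step, put $i_{\mathrm{top}}=\max s$ and $s'=s\setminus\{i_{\mathrm{top}}\}$; since $K_s(\alpha)\subseteq K_{s'}(\alpha)$, the inductive hypothesis produces some $\epsilon\in I_{i_0}(\alpha)\cap\bigcap_{i\in s'\setminus\{i_0\}}M_{\alpha,i}$ with $M_{\epsilon_l}\subseteq M_\epsilon$. To upgrade $\epsilon$ so that it additionally lies in $M_{\alpha,i_{\mathrm{top}}}$, I invoke Lemma~\ref{strata}, which gives $M_{\alpha,i_{\mathrm{top}}}\elemsub\bigh$, and reflect the just-established existential statement into $M_{\alpha,i_{\mathrm{top}}}$.

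The one technical check is that the parameters of the reflected formula all lie in $M_{\alpha,i_{\mathrm{top}}}$. For any $\delta\in I_{i_{\mathrm{top}}}(\alpha)$, retrospectiveness places $(M_\gamma)_{\gamma<\delta}$ in $M_\delta$, and since the initial summands of the $\lambda$-truncated cardinal normal form of $\delta$ coincide with those of $\alpha$, each $\cardhead{i}{\alpha}$ for $i\leq i_{\mathrm{top}}$ is definable from $\delta$; hence $I_{i_0}(\alpha)$ and each $M_{\alpha,i}$ with $i<i_{\mathrm{top}}$ belong to $M_\delta\subseteq M_{\alpha,i_{\mathrm{top}}}$. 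Membership of $M_{\epsilon_l}$ in $M_{\alpha,i_{\mathrm{top}}}$ follows from $\epsilon_l\in J_{i_{\mathrm{top}}}(\alpha)$: this places $\epsilon_l\in M_\delta$ for some $\delta\in I_{i_{\mathrm{top}}}(\alpha)$ with $\epsilon_l<\delta$ (strictly, because $|s|\geq 2$ forces $\epsilon_l<\cardhead{i_0+1}{\alpha}\leq\cardhead{i_{\mathrm{top}}}{\alpha}\leq\delta$), and then Lemma~\ref{lappsorder} gives $M_{\epsilon_l}\in M_\delta$. Finally, directedness of $\mc{K}'_s(\alpha)=\mc{K}_s(\alpha)\cap M_\alpha$ follows by running the reflection once more against $M_\alpha\elemsub\bigh$: all the relevant parameters are $\alpha$-definable and thus in $M_\alpha$ by retrospectiveness.
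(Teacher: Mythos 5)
Your proof is correct and takes essentially the same approach as the paper's: induct on $\lvert s\rvert$, peel off $\max s$, and use Lemma~\ref{lappsorder} together with elementarity of the $\max s$-stratum to relocate the witness. The only bookkeeping differences are that you reflect a global existential into the union $M_{\af,i_{\mathrm{top}}}$ rather than into a single $M_\delta\in\mc{I}_{i_{\mathrm{top}}}(\af)$, and you carry a slightly strengthened induction hypothesis (witness in $\mc{I}_{i_0}(\af)$ rather than $\mc{J}_{i_0}(\af)$).
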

\begin{proof}
Since $\mc{K}_s(\af)\in M_\af\elemsub\bigh$, 
\istst\ $\mc{K}_s(\af)$ is directed.
Proceed by induction on $\card{s}$.
Case $\card{s}=1$ follows from Lemmas \ref{lappsfund}\ and \ref{lappsref}.
Assuming $\card{s}>1$, let $i=\max(s)$ and $t=s\setminus\{i\}$.
Suppose that $\beta,\gamma\in K_s(\af)$.
Since $\beta,\gamma\in J_i(\af)$, we have
$M_\beta,M_\gamma\subseteq M_\delta$ for some $\delta\in I_i(\af)$.
By definition, $K_t(\af)<I_i(\af)$; hence, $\beta,\gamma<\delta$; 
hence, $M_\beta,M_\gamma\in M_\delta$ by Lemma~\ref{lappsorder}.
Since $\delta\in I_i(\af)$, we have $I_j(\delta)=I_j(\af)$ for
all $j<i$. Therefore, $M_\delta$ knows that $\mc{K}_t(\af)$ 
is directed and that $M_\beta,M_\gamma\in\mc{K}_t(\af)$.
Hence, there exists $M_\eps\in M_\delta\cap \mc{K}_t(\af)$ such that
$M_\beta,M_\gamma\subseteq M_\eps$. By Lemma~\ref{lappsorder}, 
$\eps\in M_\delta\cap\delta$; hence, $\eps\in J_i(\af)$.
Thus, $M_\beta$ and $M_\gamma$ have a common superset $M_\eps$ 
in $\mc{K}_t(\af)\cap\mc{J}_i(\af)$, as desired.
\end{proof}

\begin{definition}
Given a \lapps{\lm}\ $(M_\beta)_{\beta<\eta}$ and $x\in\bigcup_{\beta<\eta}M_\beta$,
let the \emph{$M$-rank of $x$}, written $\rho(x,M)$ or just $\rho(x)$, 
denote the least $\af<\eta$ such that $x\in M_\af$.
\end{definition}

\begin{lemma}\label{rhodef}
Given a \lapps{\lm}\ $(M_\beta)_{\beta<\af+1}$ and $x\in M_\af$,
we have $M_{\rho(x)}\subseteq M_\af$.
\end{lemma}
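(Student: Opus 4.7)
The plan is to split into cases based on whether $\rho(x) = \af$ or $\rho(x) < \af$. In the first case the claim is trivial, so the real content is the second case, where we must show $M_{\rho(x)} \subsetneq M_\af$. By Lemma~\ref{lappsorder}, this inclusion is equivalent to $\rho(x) \in \af \cap M_\af$, so the whole proof reduces to establishing membership $\rho(x) \in M_\af$.

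To get this membership, I would invoke retrospectiveness together with elementarity. The sequence $(M_\beta)_{\beta<\af}$ belongs to $M_\af$ by retrospectiveness, and $x \in M_\af$ by hypothesis. Working inside $M_\af$, one can then form the set $\{\beta < \af : x \in M_\beta\}$, which is $\{x\}$-definable from $(M_\beta)_{\beta<\af}$. In the case $\rho(x) < \af$ this set is nonempty (it contains $\rho(x)$), so its minimum exists and is an element of $M_\af$. Since that minimum is precisely $\rho(x)$ by definition, we conclude $\rho(x) \in \af \cap M_\af$, and Lemma~\ref{lappsorder} then delivers $M_{\rho(x)} \subsetneq M_\af$ as required.

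There is essentially no obstacle here: the lemma is a routine consequence of the setup. The only thing one might worry about is the case $\rho(x) = \af$ (where the witness set inside $M_\af$ could be empty), but that case is handled by the trivial inclusion $M_\af \subseteq M_\af$. So the proof should be just a few lines, reducing the statement to an application of Lemma~\ref{lappsorder}.
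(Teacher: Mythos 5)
Your argument is correct and is essentially the paper's proof: both reduce to showing that, when $\rho(x)<\af$, the value $\rho(x)$ (equivalently $M_{\rho(x)}$) is definable inside $M_\af$ from $x$ and the retrospective sequence $(M_\beta)_{\beta<\af}$, and then apply Lemma~\ref{lappsorder}. The only cosmetic difference is that you verify clause~\eqref{lo:ordinm} of Lemma~\ref{lappsorder} whereas the paper verifies clause~\eqref{lo:minm}; these are equivalent, so the proofs coincide.
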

\begin{proof}
Supposing $\rho(x)<\af$, we have  $M_{\rho(x)}$ definable in $M_\af$
from $x$ and $(M_\beta)_{\beta<\af}$. By Lemma~\ref{lappsorder},
we then have $M_{\rho(x)}\subsetneq M_\af$.
\end{proof}

\begin{lemma}\label{strataint}
For every \lapps{\lm}\ $(M_\beta)_{\beta<\eta}$ and 
$\vn\not=E\subseteq\eta$, there exists $D\subseteq\eta$
such that $\bigcap_{\af\in E}M_\af=\bigcup_{\af\in D}M_\af$
and $\{M_\af:\af\in D\}$ is directed.
\end{lemma}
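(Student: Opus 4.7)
My plan is to take
$$D = \{\delta < \eta : M_\delta \subseteq \bigcap_{\alpha \in E} M_\alpha\}.$$
The covering equality $\bigcap_{\alpha \in E} M_\alpha = \bigcup_{\delta \in D} M_\delta$ is immediate from Lemma~\ref{rhodef}: for each $x$ in the intersection and each $\alpha \in E$, $x \in M_\alpha$ yields $M_{\rho(x)} \subseteq M_\alpha$, so $\rho(x) \in D$ and $x \in M_{\rho(x)}$. The real content of the lemma is the directedness of $\{M_\delta : \delta \in D\}$.

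Given $\delta_1, \delta_2 \in D$, I would set
$$\delta_3 = \min\{\gamma < \eta : M_{\delta_1} \cup M_{\delta_2} \subseteq M_\gamma\}.$$
Every $\alpha \in E$ is a candidate (since $\delta_j \in D$ gives $M_{\delta_j} \subseteq M_\alpha$), so $\delta_3 \leq \min E$. The containment $M_{\delta_1} \cup M_{\delta_2} \subseteq M_{\delta_3}$ is automatic, so it remains to verify $\delta_3 \in D$, which by Lemma~\ref{lappsorder} reduces to $\delta_3 \in M_\alpha$ for every $\alpha \in E$. For $\alpha = \delta_3$ (which forces $\delta_3 = \min E \in E$), $\alpha \in M_\alpha$ is immediate since $\alpha$ is definable in $M_\alpha$ as the length of the retrospective sequence $(M_\gamma)_{\gamma < \alpha}$. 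For $\alpha > \delta_3$, one has $\delta_1, \delta_2 < \alpha$ (otherwise $\delta_j = \alpha$ would force $\delta_3 \geq \alpha$) and $\delta_1, \delta_2 \in M_\alpha$ by $\delta_j \in D$ and Lemma~\ref{lappsorder}, so $M_{\delta_1} \cup M_{\delta_2} \in M_\alpha$; combined with retrospectiveness, elementarity of $M_\alpha \elemsub \bigh$ places $\min\{\gamma < \alpha : M_\gamma \supseteq M_{\delta_1} \cup M_{\delta_2}\}$ into $M_\alpha$, and this minimum coincides with $\delta_3$ because $\delta_3 < \alpha$.

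The only delicate point is noticing that the formula defining $\delta_3$ refers only to $\delta_1$, $\delta_2$, and the sequence itself—not to $\min E$ or any other parameter that might fail to lie in some individual $M_\alpha$. This uniformity is what lets the elementarity argument succeed simultaneously for every $\alpha \in E$, and once it is observed the proof is short; none of the heavier machinery (Lemmas \ref{lappsfund}, \ref{lappsref}, \ref{lappsint}) is actually required here.
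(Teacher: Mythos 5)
Your proof is correct and follows essentially the same route as the paper: the paper also chooses a witness via the $M$-rank (it sets $\gamma=\rho(\{M_\alpha,M_\beta\})$, which by Lemma~\ref{lappsorder} picks out the least index whose model contains $M_{\delta_1}$ and $M_{\delta_2}$), establishes membership of that index in each $M_\alpha$ by the same uniform-definability argument, and dispatches the boundary case $\gamma=\min E$ in the same way you do. The only cosmetic difference is that the paper's witness is defined via $\in$ (equivalently $\subsetneq$) while yours uses $\subseteq$; your explicit case split on $\alpha=\delta_3$ versus $\alpha>\delta_3$ mirrors the paper's split on $\gamma<i$ versus $\gamma=i$.
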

\begin{proof}
Let $N=\bigcap_{\af\in E}M_\af$. By Lemma~\ref{rhodef}, 
$N=\bigcup\{M_\af:\af<\eta\text{ and }M_\af\subseteq N\}$. 
Suppose that $\af,\beta<\eta$ and $M_\af,M_\beta\subsetneq N$.
It suffices to find $\gamma<\eta$ such that 
$M_\af\cup M_\beta\subseteq M_\gamma\subseteq N$.
First, note that $M_\af,M_\beta\in M_i$ for all $i\in E$
by Lemma~\ref{lappsorder}. Since $E$ is nonempty,
we may define $\gamma=\rho(\{M_\af,M_\beta\})$.
We then have $M_\af,M_\beta\subsetneq M_\gamma$
(again by Lemma~\ref{lappsorder}). 
Fix $i\in E$; \istst\ $M_\gamma\subseteq M_i$.
By definition of $\rho$, $\gamma\leq i$. 
If $\gamma<i$, then $M_\gamma\subsetneq M_i$ because
$M_\gamma$ is definable in $M_i$.
\end{proof}

\begin{lemma}\label{stratadir}
For every \lapps{\lm}\ $(M_\af)_{\af<\eta}$ and $E\subseteq\eta$, 
if $\{M_\af:\af\in E\}$ is directed, then there exists
$i<\daleth(\eta)$ such that $E\subseteq J_i(\eta)$.
\end{lemma}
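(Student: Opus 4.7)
The plan is to exploit the fact that $\daleth(\eta)$ is finite and to use directedness as a pigeonholing device. From the definition of the $\lm$-truncated cardinal normal form, $\eta=\bigsqcup_{i<\daleth(\eta)}I_i(\eta)$, so every $\af<\eta$ lies in a unique interval; write $i_0(\af)$ for its index. The case $E=\vn$ is trivial (directedness is usually taken to require nonemptiness, and otherwise any $i<\daleth(\eta)$ works once $\eta\ge 1$), so I assume $E\ne\vn$ and set $i^*=\max\{i_0(\af):\af\in E\}$, which exists because $\daleth(\eta)$ is finite.

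Next I would fix some $\beta\in E$ witnessing $i_0(\beta)=i^*$ and claim that $E\subseteq J_{i^*}(\eta)$. Given an arbitrary $\af\in E$, directedness supplies some $\gamma\in E$ with $M_\af\cup M_\beta\subseteq M_\gamma$. Lemma~\ref{lappsorder} then yields $\af,\beta\le\gamma$ and $\af,\beta\in M_\gamma$; the borderline case $\af=\gamma$ (or $\beta=\gamma$) is covered by the observation that $\gamma$ is definable in $M_\gamma$ from $(M_\delta)_{\delta<\gamma}$, so $\gamma\in M_\gamma$.

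The crux is then that $\gamma$ must itself lie in $I_{i^*}(\eta)$: since $\beta\in I_{i^*}(\eta)$ and $\beta\le\gamma$, we have $\gamma\ge\cardhead{i^*}{\eta}$, so $i_0(\gamma)\ge i^*$; but $\gamma\in E$ forces $i_0(\gamma)\le i^*$ by the choice of $i^*$. Hence $\gamma\in I_{i^*}(\eta)$, and consequently $\af\in M_\gamma\cap(\gamma+1)\subseteq J_{i^*}(\eta)$, completing the argument.

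I do not anticipate a real obstacle; the whole proof is essentially a pigeonhole on the finite set $\daleth(\eta)$ combined with one invocation of Lemma~\ref{lappsorder}. The only minor subtlety is promoting the strict-containment statement of that lemma to the weak containment $M_\delta\subseteq M_\gamma$, which needs the trivial fact that $\gamma\in M_\gamma$ for each $\gamma<\eta$.
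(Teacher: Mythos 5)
Your proof is correct and is essentially the same as the paper's: both reduce the claim to a pigeonhole on the finite partition $\{I_i(\eta)\}_{i<\daleth(\eta)}$, use directedness to find an upper bound $M_\gamma$ with $\gamma$ in a fixed piece, and finish with Lemma~\ref{lappsorder} to land in $J_i(\eta)$. The only cosmetic difference is that you identify the cofinal piece explicitly as the one with the largest index $i^*$ occurring in $E$, whereas the paper abstractly invokes the fact that a directed set finitely partitioned has a cofinal part.
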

\begin{proof}
Let $\mc{E}=\{M_\af:\af\in E\}$ and 
$\mc{E}_i=\{M_\af:\af\in E\cap I_i(\eta)\}$
for each $i<\daleth(\eta)$.
Since $\mc{E}$ is directed and $\{\mc{E}_i:i<\daleth(\eta)\}$ 
is a finite partition of $\mc{E}$, there must exist $i$ such that
$\mc{E}_i$ is cofinal in $\mc{E}$.
By Lemma~\ref{lappsorder}, $E\subseteq J_i(\eta)$ for any such $i$.
\end{proof}

\section{Retrospective characterizations of the FN and SFN}

\begin{lemma}\label{projrestrict}
Given a poset $C$ and $A,B\subseteq C$ such that $A\commute B$
and $A\cap B$ is a relatively complete suborder of $A$, 
the functions $\pi_+^B\restrict A$ and $\pi_-^B\restrict A$
respectively equal 
$\pi_+^{A\cap B}\restrict A$ and $\pi_-^{A\cap B}\restrict A$.
\end{lemma}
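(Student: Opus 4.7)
The plan is to prove the two equalities pointwise on $A$, focusing on $\pi_+$ (the argument for $\pi_-$ is dual). Fix $x\in A$. Because $A\cap B\subrc A$, the element $z:=\pi_+^{A\cap B}(x)$ exists; since $z\in A\cap B\subseteq B$ and $z\geq x$, the element $z$ is already a candidate for $\pi_+^B(x)$, and I only have to check that it is the smallest such candidate.

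Given an arbitrary $q\in B$ with $q\geq x$, I would feed the pair $(x,q)\in A\times B$ into the definition of $A\commute B$ to obtain a witness $w\in[x,q]\cap A\cap B$. Then $w\geq x$ and $w\in A\cap B$, so the minimality of $z$ among such elements forces $z\leq w\leq q$. This shows $z=\pi_+^B(x)$; in particular $\pi_+^B(x)$ exists for every $x\in A$ and agrees there with $\pi_+^{A\cap B}(x)$, which gives the equality of the two restricted functions (including the fact that they share the same domain $A$). The argument for $\pi_-^B\restrict A=\pi_-^{A\cap B}\restrict A$ is the mirror image, invoking the second clause in the definition of $A\commute B$ to produce a witness in $[q,x]\cap A\cap B$ whenever $q\in B$ satisfies $q\leq x$.

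There is no real obstacle here: the lemma is a direct unfolding of the definitions, with commutativity supplying exactly the interpolating element needed to replace an arbitrary $q\in B$ above (resp.\ below) $x$ by a member of $A\cap B$ lying between $x$ and $q$. The only point worth highlighting is that the hypothesis $A\cap B\subrc A$ is what ensures $\pi_\pm^{A\cap B}\restrict A$ is everywhere defined; the commutativity of $A$ and $B$ then propagates this total definedness to $\pi_\pm^B\restrict A$ automatically.
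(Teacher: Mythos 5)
Your argument is correct and is essentially the same as the paper's: both fix an element of $A$, note that $\pi_+^{A\cap B}$ gives a candidate lying in $B$ above it, and use the commuting hypothesis to interpolate any competing $q\in B$ by an element of $A\cap B$, so that minimality of $\pi_+^{A\cap B}$ forces $\pi_+^{A\cap B}(x)\leq q$. The $\pi_-$ case is handled dually in both, just as you indicate.
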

\begin{proof}
Given $a\in A$ and $b\in B$ such that $a\leq b$, we
have some $c\in [a,b]\cap A\cap B$; hence,
$\pi_+^{A\cap B}(a)\leq c\leq b$; hence,
$\pi_+^B(a)$ exists and equals $\pi_+^{A\cap B}(a)$.
Likewise, $\pi_-^B(a)$ exists and equals $\pi_-^{A\cap B}(a)$.
\end{proof}

\begin{proposition}\label{communion}
If $C$ is a poset, $\mc{A},\mc{B}\subseteq\mc{P}(C)$,
and $A\commute B$ for all $(A,B)\in\mc{A}\times\mc{B}$,
then $\bigcup\mc{A}\commute\bigcup\mc{B}$.
\end{proposition}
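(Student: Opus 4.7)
The proposition unpacks directly from the definition of $\commute$, so the plan is to chase elements without any clever construction. First I would fix $x\in\bigcup\mc{A}$ and $y\in\bigcup\mc{B}$ and, invoking the axiom of choice if necessary, pick witnesses $A\in\mc{A}$ with $x\in A$ and $B\in\mc{B}$ with $y\in B$. The hypothesis gives $A\commute B$, so the definition of $\commute$ applied to the pair $(x,y)\in A\times B$ handles both cases.

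More explicitly: if $x\leq y$, then $A\commute B$ yields some $z\in [x,y]\cap A\cap B$, and since $A\subseteq\bigcup\mc{A}$ and $B\subseteq\bigcup\mc{B}$, this $z$ witnesses $[x,y]\cap\bigcup\mc{A}\cap\bigcup\mc{B}\neq\vn$. The case $y\leq x$ is symmetric, producing $z\in[y,x]\cap A\cap B\subseteq[y,x]\cap\bigcup\mc{A}\cap\bigcup\mc{B}$. Since $x$ and $y$ were arbitrary elements of $\bigcup\mc{A}$ and $\bigcup\mc{B}$ respectively, this establishes $\bigcup\mc{A}\commute\bigcup\mc{B}$.

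There is no real obstacle here; the main thing to watch is simply that the quantifier structure of $\commute$ is ``for all pairs $(x,y)$,'' so one does not need $x$ and $y$ to lie in a common member of $\mc{A}$ or $\mc{B}$ --- only that each lies in some member of its respective family. That suffices because the conclusion of $\commute$ already places the interpolant in $A\cap B$, which is contained in the union of each family.
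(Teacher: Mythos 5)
Your proof is correct, and it is exactly the element-chasing argument the paper has in mind; indeed, the paper states this proposition without proof precisely because it unwinds this directly from the definition of $\commute$, as you observe.
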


\begin{proposition}\label{rcdown}
Given a poset $C$ and $A\subseteq B\subseteq C$,
if $A$ is a relatively complete suborder of $C$, 
then $A$ is relatively complete suborder of $B$.
\end{proposition}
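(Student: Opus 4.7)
The plan is simply to unfold the definitions. Fix any $b\in B$; since $B\subseteq C$, we have $b\in C$, and so the hypothesis that $A$ is a relatively complete suborder of $C$ guarantees that $\pi_+^A(b)$ and $\pi_-^A(b)$ exist when computed with $C$ as the ambient poset. Because $A\subseteq B\subseteq C$, we have $A\cap B=A=A\cap C$, and the partial order on $B$ is the restriction of that on $C$. Consequently the sets $\{q\in A:q\geq b\}$ and $\{q\in A:q\leq b\}$ do not depend on whether we view $b$ as an element of $B$ or of $C$, so the extrema furnished by the hypothesis simultaneously witness that $\pi_+^A(b)$ and $\pi_-^A(b)$ exist with $B$ as the ambient poset.

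There is no substantive obstacle here; this is a pure definition-chase. The only conceptual point worth isolating is that ``relative completeness of $A$'' is a property that weakens as the ambient poset shrinks: passing from $C$ to $B\supseteq A$ only removes elements from the universal quantifier ``for all $p$'' in the definition, and for the elements that remain the witnessing extrema, computed among the candidates in $A$, are unchanged.
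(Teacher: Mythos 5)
Your proof is correct: the paper states Proposition~\ref{rcdown} without proof, treating it as a routine definition-chase, and your unfolding of the definitions is precisely the argument being tacitly invoked. The key observation—that for $b\in B$ the candidate set $\{q\in A:q\geq b\}$ (and likewise $\{q\in A:q\leq b\}$) and the order in which its extremum is computed are unchanged when the ambient poset shrinks from $C$ to $B\supseteq A$—is exactly the right point to isolate.
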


\begin{definition}\
Given a boolean algebra $A$, a \lapps{\lm}\ $(M_\beta)_{\beta<\eta}$,
$x\in A\cap\bigcup_{\beta<\eta}M_\beta$, and $i<\daleth(\rho(x))$,
let $\pi_+^i(x,M)$ or just $\pi_+^i(x)$ denote 
$\pi_+^{M_{\rho(x),i}}(x)$ if it exists;
likewise let $\pi_-^i(x,M)$ or just $\pi_-^i(x)$ denote
$\pi_-^{M_{\rho(x),i}}(x)$ if it exists.
\end{definition}

\begin{theorem}\label{fnchar}
Let $A$ be a boolean algebra.
The following are equivalent.
\enu
\item\label{fn:fn} 
$A$ has the FN.
\item\label{fn:allpi}
For every \lapps{\oml}\ $(M_\af)_{\af<\card{A}}$ with $A\in M_0$,
for every $x\in A$, $\rho(x)$ exists and, for every $i<\daleth(\rho(x))$,
$\pi_+^i(x)$ and $\pi_-^i(x)$ exist.
\item\label{fn:somepi}
There exists a \lapps{\oml}\ $(M_\af)_{\af<\card{A}}$ such that,
for every $x\in A$, $\rho(x)$ exists and, for every $i<\daleth(\rho(x))$,
$\pi_+^i(x)$ and $\pi_-^i(x)$ exist.
\item\label{fn:allbigcomm}
For every \lapps{\oml}\ $(M_\af)_{\af<\card{A}}$ with $A\in M_0$,
for every $\af<\card{A}$, and for every $i<\daleth(\af)$, 
we have $A\cap M_\af\commute A\cap M_{\af,i}$ and
$A\cap M'_{\af,i}\subrc A\cap M_\af$.
\item\label{fn:somebigcomm}
There exists a \lapps{\oml}\ $(M_\af)_{\af<\card{A}}$ such that
$A\subseteq\bigcup_{\af<\card{A}}M_\af$ and, for all $\af<\card{A}$ and
$i<\daleth(\af)$, we have $A\cap M_\af\subalg A$,
$A\cap M_\af\commute A\cap M_{\af,i}$, and
$A\cap M'_{\af,i}\subrc A\cap M_\af$.
\item\label{fn:allsmallcomm}
For every \lapps{\oml}\ $(M_\af)_{\af<\card{A}}$ with $A\in M_0$,
for every $\af,\beta<\card{A}$, and for every $i<\daleth(\af)$, 
we have $A\cap M_\af\commute A\cap M_\beta$ and
$A\cap M'_{\af,i}\subrc A\cap M_\af$.
\item\label{fn:somesmallcomm}
There exists a \lapps{\oml}\ $(M_\af)_{\af<\card{A}}$ such that
$A\subseteq\bigcup_{\af<\card{A}}M_\af$ and, for all $\af,\beta<\card{A}$ and
$i<\daleth(\af)$, we have $A\cap M_\af\subalg A$,
$A\cap M_\af\commute A\cap M_\beta$, and
$A\cap M'_{\af,i}\subrc A\cap M_\af$.
\une
\end{theorem}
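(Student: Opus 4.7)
My plan is to establish the equivalence of all seven conditions via the cycle $(3)\Rightarrow(1)\Rightarrow(6)\Rightarrow(4)\Rightarrow(2)\Rightarrow(3)$ along with the side arrows $(6)\Rightarrow(7)\Rightarrow(5)\Rightarrow(3)$ and $(4)\Rightarrow(5)$, using Fuchino's Theorem \ref{fuchinochar} as the bridge to condition (1). The routine implications dispatch as follows: the existence implications $(2)\Rightarrow(3)$, $(4)\Rightarrow(5)$, $(6)\Rightarrow(7)$ follow by iterating Lemma \ref{ubiquity} to produce a \lapps{\oml}\ of length $\card{A}$ with $A\in M_0$; the strengthenings $(6)\Rightarrow(4)$ and $(7)\Rightarrow(5)$ follow from Proposition \ref{communion} via $A\cap M_{\af,i}=\bigcup_{\beta\in I_i(\af)}(A\cap M_\beta)$; and the weakenings $(4)\Rightarrow(2)$ and $(5)\Rightarrow(3)$ follow from Lemma \ref{projrestrict} applied with outer poset $A\cap M_\af$ and inner $A\cap M_{\af,i}$, whose intersection is $A\cap M'_{\af,i}$, so that commutativity together with the given relative completeness yields $\pi_\pm^{M_{\af,i}}(x)=\pi_\pm^{M'_{\af,i}}(x)$ for every $x\in A\cap M_\af$.

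For $(1)\Rightarrow(6)$, given a \lapps{\oml}\ with $A\in M_0$, Lemma \ref{lappsorder} yields $A\in M_\af$ for each $\af$, so Fuchino's theorem gives $A\cap M_\af\subrc A$. For the commutativity clause: given $a\in A\cap M_\af$ and $b\in A\cap M_\beta$ with $a\leq b$ and $\af\leq\beta$, elementarity applied to $A\cap M_\af\in M_\beta$ places $\pi_+^{A\cap M_\af}(b)$ inside $A\cap M_\af\cap M_\beta$, supplying the required interpolant. For the rc clause, Lemma \ref{strata} gives $M'_{\af,i}\elemsub\bigh$, Fuchino delivers $A\cap M'_{\af,i}\subrc A$, and Proposition \ref{rcdown} (using $M'_{\af,i}\subseteq M_\af$) downgrades this to $A\cap M'_{\af,i}\subrc A\cap M_\af$.

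The crux will be $(3)\Rightarrow(1)$. Starting from a witness \lapps{\oml}\ to (3), I pick for each $\af$ an interpolating map $g_\af\colon A\cap M_\af\to[A\cap M_\af]^{<\alo}$ on the countable poset $A\cap M_\af$ (available by the standard enumeration trick on any countable poset). Then I define $f\colon A\to[A]^{<\alo}$ by recursion on $\rho$ via
\[ f(x)=\{x\}\cup g_{\rho(x)}(x)\cup\bigcup_{i<\daleth(\rho(x))}\bigl(f(\pi_+^i(x))\cup f(\pi_-^i(x))\bigr). \]
Since $\rho(\pi_\pm^i(x))<\rho(x)$ and $\daleth$ is finite-valued on each ordinal, the recursion is well-founded and $f(x)$ is finite. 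For the interpolation verification, given $x\leq y$ in $A$ with WLOG $\rho(x)\leq\rho(y)=\af$, I induct on $\af$: if $\rho(x)=\af$, then $g_\af$ furnishes an interpolant in $g_\af(x)\cap g_\af(y)\cap[x,y]\subseteq f(x)\cap f(y)$; if $\rho(x)<\af$, Lemma \ref{strata} places $x$ in some $M'_{\af,i}$, so $z:=\pi_-^i(y)\in[x,y]$ has $\rho(z)<\af$, and the inductive hypothesis on $(x,z)$ yields $w\in f(x)\cap f(z)\cap[x,z]$, with $f(z)\subseteq f(y)$ by construction. The main obstacle is the same-rank case, where a naive $\pi$-closure construction fails because two elements of common rank $\af$ need not share any $\pi$-descendant; injecting the countable-poset interpolating maps $g_\af$ bypasses this obstruction cleanly.
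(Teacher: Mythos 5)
Your overall scaffolding is close to the paper's, and your proof of \eqref{fn:somepi}$\Rightarrow$\eqref{fn:fn} is a correct and genuinely different variant: where the paper handles the equal-rank case via a length-$\leq\om$ well-ordering $\sqsubseteq_\af$ of each rank class $\{x:\rho(x)=\af\}$ and seeds $f(x)$ with $\{y:y\sqsubseteq x\}$, you instead seed $f(x)$ with $g_{\rho(x)}(x)$ where $g_\af$ is an interpolating map on the countable poset $A\cap M_\af$. Both gadgets discharge the same obstruction you correctly identify. The routine implications (existence, \textbf{communion}, \textbf{projrestrict}) match the paper's.

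However, there is a genuine gap in your argument for \eqref{fn:fn}$\Rightarrow$\eqref{fn:allsmallcomm}, specifically the commutativity clause $A\cap M_\af\commute A\cap M_\beta$. You invoke ``elementarity applied to $A\cap M_\af\in M_\beta$,'' but $M_\af\in M_\beta$ does not follow from $\af\leq\beta$. By Lemma~\ref{lappsorder}, $M_\af\in M_\beta$ is equivalent to $\af\in\beta\cap M_\beta$, and when $\card{A}>\oml$ there are many pairs $\af<\beta$ with $\af\notin M_\beta$ (a countable $M_\beta$ can omit all but countably many ordinals below $\beta$). For such pairs neither $M_\af\in M_\beta$ nor $M_\beta\in M_\af$ holds, and your argument produces no interpolant; the models in a \lapps{\oml}\ of length $\geq\om_2$ are emphatically not a $\subseteq$-chain. (There is also a small typo: for $a\leq b$ with $a\in A\cap M_\af$ the interpolant in $[a,b]$ should be $\pi_-^{A\cap M_\af}(b)$, not $\pi_+^{A\cap M_\af}(b)$, the latter lying above $b$.) The paper sidesteps the whole issue by fixing a single FN map $f\in M_0$; since $M_0\subseteq M_\af\cap M_\beta$ for all $\af,\beta$ (Lemma~\ref{retro0}), both $M_\af$ and $M_\beta$ are $f$-closed, so for $a\leq b$ the nonempty set $[a,b]\cap f(a)\cap f(b)$ lands in $A\cap M_\af\cap M_\beta$. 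Your argument needs to be replaced with something along these lines that does not presuppose comparability of $M_\af$ and $M_\beta$.
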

\begin{proof}
\eqref{fn:fn}$\Rightarrow$\eqref{fn:allpi}.
Fix $M$ as in the hypothesis of \eqref{fn:allpi}.
For each $x\in A$, $\rho(x)$ exists by Lemma~\ref{cover0}.
Each $M_{\rho(x),i}$ is an elementary substructure of 
$\bigh$ by Lemma~\ref{strata}. Also,
$A\in M_{\rho(x),i}$  by Lemma~\ref{retro0}. 
Hence, $A\cap M_{\rho(x),i}\subrc A$ by Theorem~\ref{fuchinochar}.
Hence, $\pi_+^i(x)$ and $\pi_-^i(x)$ exist.

\eqref{fn:fn}$\Rightarrow$\eqref{fn:allsmallcomm}.
Fix $M$ as in the hypothesis of \eqref{fn:allsmallcomm}.
By Lemma~\ref{retro0}, we have $A\in M'_{\af,i}$ 
for all $\af<\card{A}$ and $i<\daleth(\af)$.
Hence, by Lemma~\ref{strata} and Theorem~\ref{fuchinochar},
we have $A\cap M'_{\af,i}\subrc A$.
Hence, $A\cap M'_{\af,i}\subrc A\cap M_\af$
by Proposition~\ref{rcdown}. Finally,
given $\af,\beta<\card{A}$, choose an FN map $f\in M_0$.
By Lemma~\ref{retro0}, $M_\af$ and $M_\beta$ are $f$-closed;
hence, $A\cap M_\af\commute A\cap M_\beta$.

\eqref{fn:allpi}$\Rightarrow$\eqref{fn:somepi}, 
\eqref{fn:allbigcomm}$\Rightarrow$\eqref{fn:somebigcomm}, 
and \eqref{fn:allsmallcomm}$\Rightarrow$\eqref{fn:somesmallcomm}. 
Choose $\bighcard$ large enough that $A\in\bigh$.
By Lemma~\ref{ubiquity}, there is a \lapps{\oml} $(M_\af)_{\af<\card{A}}$ 
with $A\in M_0$. By Lemma~\ref{retro0}, we have
$A\cap M_\af\subalg A$ for all $\af<\card{A}$.
By Lemma~\ref{cover0}, we have $A\subseteq\bigcup_{\af<\card{A}}M_\af$.

\eqref{fn:allsmallcomm}$\Rightarrow$\eqref{fn:allbigcomm}
and \eqref{fn:somesmallcomm}$\Rightarrow$\eqref{fn:somebigcomm}.
Apply Proposition~\ref{communion}.

\eqref{fn:somebigcomm}$\Rightarrow$\eqref{fn:somepi}. 
By Lemma~\ref{projrestrict}, $\pi_+^{M_{\af,i}}$ and $\pi_-^{M_{\af,i}}$
are well-defined on all of $A\cap M_\af$.

\eqref{fn:somepi}$\Rightarrow$\eqref{fn:fn}. 
This is implicit in the author's proof of Corollary~3.4 of~\cite{mlkfn},
but we include a proof here for completeness.
For each $\af<\card{A}$, choose a well-ordering
$\sqsubseteq_\af$ of $\{x\in A:\rho(x)=\af\}$ with length at most $\om$. 
Set ${\sqsubseteq}=\bigcup_{\af<\card{A}}{\sqsubseteq_\af}$.
Recursively define $f\colon A\rightarrow[A]^{<\alo}$ by
$$f(x)=\{y: y\sqsubseteq x\}\cup\left(\bigcup_{i<\daleth(\rho(x))}
(f(\pi_+^i(x))\cup f(\pi_-^i(x)))\right).$$
Suppose $x\leq_A y$. We verify that $S=[x,y]\cap f(x)\cap f(y)$ 
is nonempty by induction on $\max\{\rho(x),\rho(y)\}$.
If $\rho(x)=\rho(y)$, then
$x\sqsubseteq y$, in which case $x\in S$,
or $y\sqsubseteq x$, in which case $y\in S$.
If $\rho(x)<\rho(y)$, then $x\in M_{\rho(y),i}$ for some $i$,
in which case $[x,\pi_-^i(y)]\cap f(x)\cap f(\pi_-^i(y))$ 
is a nonempty subset of $S$.
If $\rho(y)<\rho(x)$, then $y\in M_{\rho(x),i}$ for some $i$,
in which case $[\pi_+^i(x),y]\cap f(\pi_+^i(x))\cap f(y)$
is a nonempty subset of $S$.
\end{proof}

\begin{lemma}\label{commproj}
Given boolean algebras $A\subalg C$ and $B\subrc C$, 
the following are equivalent.
\enu
\item\label{cp:comm} $A\commute B$.
\item\label{cp:up} $\pi_+^B[A]\subseteq A$.
\item\label{cp:down} $\pi_-^B[A]\subseteq A$.
\une
\end{lemma}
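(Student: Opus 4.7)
The plan is to prove \eqref{cp:comm}$\Rightarrow$\eqref{cp:up}$\Rightarrow$\eqref{cp:comm} and then note that \eqref{cp:up}$\Leftrightarrow$\eqref{cp:down} is a formal consequence of boolean complementation, via the identity $\pi_+^B(x)=-\pi_-^B(-x)$ (and its dual), which holds whenever the relevant projections exist because $-$ is an order-reversing involution and $B$ is closed under $-$.

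For \eqref{cp:comm}$\Rightarrow$\eqref{cp:up}, I would fix $a\in A$ and set $b=\pi_+^B(a)$, which exists by $B\subrc C$. Then $(a,b)\in A\times B$ and $a\leq b$, so by $A\commute B$ there is $z\in [a,b]\cap A\cap B$. Since $z\in B$ and $z\geq a$, minimality of $b$ forces $b\leq z$, but $z\leq b$ already, hence $b=z\in A$.

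For \eqref{cp:up}$\Rightarrow$\eqref{cp:comm}, I would handle the two cases of commutation separately. Given $a\in A$, $b\in B$ with $a\leq b$, set $c=\pi_+^B(a)$, which lies in $A$ by hypothesis and in $B$ by definition; since $b$ is a witness that $a$ has a $B$-upper bound, $a\leq c\leq b$, so $c\in[a,b]\cap A\cap B$. For the case $b\leq a$, I would pass to complements: $-a\in A$, $-b\in B$, and $-a\leq -b$, so the previous case yields some $c\in[-a,-b]\cap A\cap B$; then $-c\in[b,a]\cap A\cap B$ since $A$ and $B$ are both closed under complementation.

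There is no real obstacle here; the only point that requires slight care is that passage between the two halves of the commutation condition, and between the two projections $\pi_+^B$ and $\pi_-^B$, uses the fact that $A$ and $B$ are boolean subalgebras (not merely subposets) and hence closed under $-$. Once this is observed, the three implications become short applications of the definitions of $\commute$ and $\pi_\pm^B$.
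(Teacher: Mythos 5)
Your proposal is correct and follows essentially the same route as the paper: (1)$\Rightarrow$(2) by applying commutation to the pair $(a,\pi_+^B(a))$ and using minimality, (2)$\Rightarrow$(1) by observing that $\pi_+^B(a)$ lands in the interval and handling $b\leq a$ via complementation, and (2)$\Leftrightarrow$(3) from the identity $\pi_-^B(x)=-\pi_+^B(-x)$. The only cosmetic difference is that the paper writes the $b\leq a$ case directly in terms of $\pi_-^B$ rather than explicitly invoking the $a\leq b$ case on complements, but the underlying computation is identical.
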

\begin{proof}
\eqref{cp:comm}$\Rightarrow$\eqref{cp:up}.
Given $a\in A$, we have $a\leq \pi_+^B(a)\in B$, so
there exists $b\in [a,\pi_+^B(a)]\cap A\cap B$. However, by
definition of $\pi_+^B$, we must have have $b=\pi_+^B(a)$.

\eqref{cp:up}$\Rightarrow$\eqref{cp:comm}.
Given $a\in A$ and $b\in B$, if $a\leq b$, then
$\pi_+^B(a)\in[a,b]\cap A\cap B$; if $b\leq a$, then
$\pi_-^B(a)\in[b,a]\cap B$ and $\pi_-^B(a)=-\pi_+^B(-a)\in A$.

\eqref{cp:up}$\Leftrightarrow$\eqref{cp:down}.
$\pi_-^B(\bullet)=-\pi_+^B(-\bullet)$ and
$\pi_+^B(\bullet)=-\pi_-^B(-\bullet)$.
\end{proof}

\begin{definition}\
\itz
\item 
A \emph{partial algebra} is a pair of the form $(U,\mc{F})$
where $U$ is a set (called the universe of $(U,\mc{F})$) 
and $\mc{F}$ is a set of functions such that, for each $f\in\mc{F}$,
there exists $n<\om$ such that $\dom(f)\subseteq A^n$.
If every $\dom(f)$ is of the form $A^n$, then
we say that $(U,\mc{F})$ is an \emph{algebra}.
\item 
A partial algebra $(U,\mc{F})$ is a \emph{subalgebra} of 
a partial algebra $(V,\mc{G})$ if $U\subseteq V$,
$\mc{F}=\{g\restrict U^{<\om}:g\in\mc{G}\}$, 
and $\bigcup_{g\in\mc{G}}g[U^{<\om}]\subseteq U$.
\item 
A partial algebra $(U,\mc{F})$ is \emph{locally finite} if,
for every finite $A\subseteq U$, there exists a finite $B\subseteq U$
such that $A\subseteq B$ and $(B,\{f\restrict B:f\in\mc{F}\})$ 
is a subalgebra of $(U,\mc{F})$.
\item 
We say that a partial algebra $(V,\mc{G})$ \emph{expands} 
a partial algebra $(U,\mc{F})$ if $U=V$ and $\mc{F}\subseteq\mc{G}$. 
\item
Given a \lapps{\lm} $(M_i)_{i<\eta}$ and a boolean algebra $A$, 
let the \emph{$M$-expansion of $A$}, 
written $A[M]$, denote the expansion of $A$ resulting
from adding the functions in the set
$\bigcup_{i<\om}\{\pi_+^i,\pi_-^i\}$.
\zti
\end{definition}

\begin{theorem}\label{mlocfin}
If $A$ is a boolean algebra with the SFN, 
$(M_\af)_{\af<\card{A}}$ is a \lapps{\oml},
and $A\in M_0$, then $A[M]$ is locally finite.
\end{theorem}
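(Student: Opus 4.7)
The strategy is to show that any sufficiently large finite subalgebra of $A$ drawn from a cofinal commuting family is automatically closed under the partial operations of $A[M]$, so that the required finite subalgebras of $A[M]$ are abundant. By SFN, fix a cofinal, pairwise-commuting family $\mc{C}$ of finite subalgebras of $A$. Since $A\in M_0\elemsub\bigh$, elementarity lets us take $\mc{C}\in M_0$. Given a finite $F\subseteq A$, choose $B_0\in\mc{C}$ with $F\subseteq B_0$. The claim is that $B_0$ is closed under $\pi_+^i$ and $\pi_-^i$ wherever these are defined, so $B_0$ is the desired finite subalgebra of $A[M]$.

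Fix $x\in B_0$, set $\af=\rho(x)$, and let $i<\daleth(\af)$. By definition $\pi_+^i(x)=\pi_+^{A\cap M_{\af,i}}(x)$. Since SFN implies FN, Fuchino's Theorem~\ref{fuchinochar} combined with Lemmas~\ref{strata} and~\ref{retro0} yields $A\cap M_{\af,i}\subrc A$. By Lemma~\ref{commproj}, the three statements $B_0\commute A\cap M_{\af,i}$, $\pi_+^{A\cap M_{\af,i}}[B_0]\subseteq B_0$, and $\pi_-^{A\cap M_{\af,i}}[B_0]\subseteq B_0$ are equivalent, so it suffices to verify the commutation.

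The crux is to realize $A\cap M_{\af,i}$ as a union of members of $\mc{C}$. Since $\mc{C}\in M_0$, Lemma~\ref{retro0} gives $\mc{C}\in M_\beta$ for every $\beta\in I_i(\af)$. For each $y\in A\cap M_\beta$, elementarity of $M_\beta$ supplies some $B\in\mc{C}\cap M_\beta$ containing $y$; because $B$ is finite with $B\in M_\beta$, in fact $B\subseteq M_\beta$. Hence $A\cap M_\beta=\bigcup\{B\in\mc{C}:B\subseteq M_\beta\}$. Moreover, directedness of $\mc{I}_i(\af)$ (Lemma~\ref{lappsref}) lets any single finite $B\subseteq M_{\af,i}$ fit inside one $M_\beta$ with $\beta\in I_i(\af)$, so $A\cap M_{\af,i}=\bigcup\{B\in\mc{C}:B\subseteq M_{\af,i}\}$. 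Every member of this family commutes with $B_0$ since both lie in $\mc{C}$, so Proposition~\ref{communion} delivers $B_0\commute A\cap M_{\af,i}$, as required.

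The main obstacle is the decomposition of $A\cap M_{\af,i}$ as a union of members of $\mc{C}$: it requires both the elementarity of every $M_\beta$ with $\beta\in I_i(\af)$ (to place the witness $B$ for $y$ inside $M_\beta$, which in turn needs $\mc{C}\in M_0$) and the directedness of $\mc{I}_i(\af)$ (to consolidate a single finite $B\subseteq M_{\af,i}$ into one $M_\beta$); without either ingredient the union description fails and the commutation argument via Proposition~\ref{communion} is unavailable.
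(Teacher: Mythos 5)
Your proof is correct and follows essentially the same route as the paper's: both fix $\mc{C}\in M_0$ by elementarity, both reduce to showing each member of $\mc{C}$ is closed under $\pi_\pm^i$, and both hinge on Fuchino's characterization (to get $A\cap M_{\af,i}\subrc A$) together with Lemma~\ref{commproj}. The one organizational difference is in the middle step: where you decompose $A\cap M_{\af,i}$ as $\bigcup\{B\in\mc{C}:B\subseteq M_{\af,i}\}$ and invoke Proposition~\ref{communion} to obtain $B_0\commute A\cap M_{\af,i}$, the paper instead picks a single $G\in\mc{C}\cap M_{\af,i}$ with $\pi_\pm^i(x)\in G$ and observes that $\pi_\pm^i(x)=\pi_\pm^G(x)$, then applies Lemma~\ref{commproj} to the pair $(F,G)$ directly. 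Your version is marginally longer but avoids that last identification; the paper's is more local and slightly shorter. Also, as you note in passing but slightly overstate, the directedness of $\mc{I}_i(\af)$ is not actually needed for your decomposition to hold (the union over $\beta\in I_i(\af)$ already gives both inclusions); it is a convenient but inessential observation here.
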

\begin{proof}
Let $\mc{C}\in M_0$ be a pairwise commuting cofinal family of
finite subalgebras of $A$.
\Istst, for every $F\in\mc{C}$,
(an expansion of) $F$ is a subalgebra of $A[M]$.  
Let $x\in F\in\mc{C}$, $\af=\rho(x)$, and $i<\daleth(\af)$. 
By Theorem~\ref{fnchar}, $\pi_\pm^i(x)$ are well-defined. 
By Lemmas \ref{retro0} and \ref{strata}, $\mc{C}\in M_{\af,i}\elemsub\bigh$,
so we may choose $G\in M_{\af,i}\cap\mc{C}$ such that $\pi_\pm^i(x)\in G$. 
Since $G\in M_{\af,i}$ and $G$ is finite, $G\subseteq M_{\af,i}$;
hence, $\pi_\pm^i(x)\in G$ implies $\pi_\pm^i(x)=\pi_\pm^G(x)$.
By Lemma~\ref{commproj}, $\pi_\pm^G(x)\in F$.
\end{proof}

The proof of Theorem~\ref{mlocfin} implicitly shows much more.
Indeed, we can expand $A[M]$ by adding every function of the
form $\pi_\pm^N$ where $\mc{C}\in N\elemsub\bigh$, yet still obtain
a locally finite partial algebra. However, local finiteness of
$A[M]$ is strong enough for our purposes. As we shall 
show in Section~\ref{bigproof}, it is strictly stronger than the FN.

\begin{question}
If $A$ is a boolean algebra with the FN, $(M_\af)_{\af<\card{A}}$ is a \lapps{\oml},
$A\in M_0$, and $A[M]$ is locally finite, then does $A$ have the SFN?
\end{question}

We do not know the answer to the above question.
However, we will point out that if we broaden our consideration
to arbitrary expansions of boolean algebras, then 
characterizations of the FN and SFN are apparently easier to obtain.

\begin{definition}
Call an FN map $f$ \emph{transitive} if $f(y)\subseteq f(x)$ for
all $x\in\dom(f)$ and $y\in f(x)$.
\end{definition}

\begin{lemma}\label{transfn}
If $A$ has the FN, then $A$ has a transitive FN map.
\end{lemma}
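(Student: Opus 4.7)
The plan is to use Theorem~\ref{fnchar} to convert the hypothesis that $A$ has the FN into an explicit retrospective structure, then build a transitive FN map by recursion on $M$-rank, with a mild enhancement that propagates inclusions cleanly across same-rank elements.

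First, pick $\bighcard$ large enough that $A\in\bigh$ and, by Lemma~\ref{ubiquity}, a \lapps{\oml}\ $(M_\af)_{\af<\card{A}}$ with $A\in M_0$; Theorem~\ref{fnchar}, implication \eqref{fn:fn}$\Rightarrow$\eqref{fn:allpi}, then guarantees that $\pi_+^i(x)$ and $\pi_-^i(x)$ exist for all $x\in A$ and $i<\daleth(\rho(x))$. Since $\{x\in A:\rho(x)=\af\}\subseteq M_\af$ is countable, fix for each $\af<\card{A}$ a well-ordering $\sqsubseteq_\af$ of this set of order type at most $\om$, and let $\sqsubseteq=\bigcup_\af\sqsubseteq_\af$.

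Now define $f\colon A\to[A]^{<\alo}$ by recursion on $\rho(x)$:
\[
f(x) = \{z\in A:z\sqsubseteq x\}\;\cup\;\bigcup_{y\sqsubseteq x}\,\bigcup_{i<\daleth(\rho(x))}\bigl(f(\pi_+^i(y))\cup f(\pi_-^i(y))\bigr).
\]
The recursion is well-founded since $\pi_\pm^i(y)\in M_{\rho(y),i}\subseteq\bigcup_{\beta<\rho(y)}M_\beta$ by Lemma~\ref{strata}, giving $\rho(\pi_\pm^i(y))<\rho(x)$; hence $f(x)$ is finite by induction, because $\{z:z\sqsubseteq x\}$ is a finite initial segment of an $\om$-order, $\daleth(\rho(x))$ is finite, and each $f(\pi_\pm^i(y))$ is finite by the inductive hypothesis. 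For transitivity, suppose $y\in f(x)$. If $y\sqsubseteq x$, then $\rho(y)=\rho(x)$, and every term in the defining union of $f(y)$ (indexed by $y'\sqsubseteq y$ and $i<\daleth(\rho(y))$) also appears in the defining union of $f(x)$, since $y'\sqsubseteq y\sqsubseteq x$ and $\daleth(\rho(y))=\daleth(\rho(x))$; thus $f(y)\subseteq f(x)$. Otherwise $y\in f(\pi_\pm^i(y'))$ for some $y'\sqsubseteq x$, and since $\rho(\pi_\pm^i(y'))<\rho(x)$ the inductive hypothesis yields $f(y)\subseteq f(\pi_\pm^i(y'))\subseteq f(x)$.

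For the interpolation property, verify by induction on $\max\{\rho(x),\rho(z)\}$ that $[x,z]\cap f(x)\cap f(z)$ is nonempty whenever $x\leq_A z$. If $\rho(x)=\rho(z)$, then $x$ and $z$ are $\sqsubseteq$-comparable, and the $\sqsubseteq$-smaller one lies in both $f(x)$ and $f(z)$. If $\rho(x)<\rho(z)$, Lemma~\ref{strata} places $x$ in some $M_{\rho(z),i}$, so $x\leq\pi_-^i(z)\leq z$; the inductive hypothesis applied to $x\leq\pi_-^i(z)$ produces a witness $w\in[x,\pi_-^i(z)]\cap f(x)\cap f(\pi_-^i(z))$, and then $f(\pi_-^i(z))\subseteq f(z)$ (since $z\sqsubseteq z$ puts this term in the defining union for $f(z)$) delivers $w\in[x,z]\cap f(x)\cap f(z)$; the case $\rho(z)<\rho(x)$ is symmetric via $\pi_+^i$. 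The main obstacle is transitivity in the same-rank case: a naive recursion using only $\pi_\pm^i(x)$ (rather than $\pi_\pm^i(y)$ for every $y\sqsubseteq x$) fails to yield $f(y)\subseteq f(x)$ when $y\sqsubseteq x$ is strictly below $x$, because $\pi_\pm^i(y)\ne\pi_\pm^i(x)$ in general. The preemptive closure over all $\sqsubseteq$-predecessors of $x$ — which stays finite because $\sqsubseteq_{\rho(x)}$ has order type at most $\om$ — is precisely what repairs this gap.
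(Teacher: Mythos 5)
Your proof is correct and takes essentially the same approach as the paper's: the paper modifies the recursive definition from the \eqref{fn:somepi}$\Rightarrow$\eqref{fn:fn} argument of Theorem~\ref{fnchar} to $f(x)=\{x\}\cup\bigcup_{y\sqsubset x}f(y)\cup\bigcup_{i<\daleth(\rho(x))}(f(\pi_+^i(x))\cup f(\pi_-^i(x)))$, and your definition is just the same map with the within-rank recursion over $\sqsubset$-predecessors unrolled. The interpolation and transitivity arguments are identical in substance.
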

\begin{proof}
Construct an FN map $f$ as in the proof of
\eqref{fn:allbigcomm}$\Rightarrow$\eqref{fn:fn}\ in Theorem~\ref{fnchar},
except use the following recursive definition of $f$:
$$f(x)=\{x\}\cup\left(\bigcup_{y\sqsubset x}f(y)\right)\cup
\left(\bigcup_{i<\daleth(\rho(x))}
(f(\pi_+^i(x))\cup f(\pi_-^i(x)))\right).$$
The proof of 
\eqref{fn:allbigcomm}$\Rightarrow$\eqref{fn:fn}\ in Theorem~\ref{fnchar}
still works {\it verbatim}, but now $f$ is also transitive.
\end{proof}

\begin{lemma}\label{commint}
If $A$ is a boolean algebra, $\mc{B}$ is a pairwise commuting family 
of relatively complete subalgebras of $A$, and $B_0,B_1\in\mc{B}$,
then $\mc{B}\cup\{B_0\cap B_1\}$ is pairwise commuting.
\end{lemma}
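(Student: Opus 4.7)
The plan is to reduce pairwise commutation to the projection criterion provided by Lemma~\ref{commproj}. Write $B=B_0\cap B_1$. The elements of $\mc{B}\cup\{B\}$ split into three kinds of pairs: pairs both lying in $\mc{B}$ (already commuting by hypothesis), the pair $(B,B)$, and pairs of the form $(B,B')$ with $B'\in\mc{B}$. The self-commutation $B\commute B$ is immediate from the definition, since for any $x\leq y$ in $B$ we have $x\in[x,y]\cap B\cap B$. So the only real work is handling $B\commute B'$ for $B'\in\mc{B}$.

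For this I would invoke Lemma~\ref{commproj} with ambient algebra $A$, noting that $B$ is a subalgebra of $A$ and that $B'\subrc A$ by hypothesis. The lemma then reduces $B\commute B'$ to the inclusion $\pi_+^{B'}[B]\subseteq B$. Fix $x\in B$, so $x\in B_0$ and $x\in B_1$. Since $B_0\commute B'$ is given and $B_0\subalg A$, $B'\subrc A$, a second application of Lemma~\ref{commproj} yields $\pi_+^{B'}(x)\in B_0$. The identical argument with $B_1$ replacing $B_0$ gives $\pi_+^{B'}(x)\in B_1$. Hence $\pi_+^{B'}(x)\in B_0\cap B_1=B$, completing the inclusion $\pi_+^{B'}[B]\subseteq B$ and hence the commutation.

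I do not expect any genuine obstacle: the argument is morally a one-liner, because Lemma~\ref{commproj} converts the commutation relation with a relatively complete subalgebra into a closure condition under the map $\pi_+^{B'}$, and closure under a fixed function trivially passes to intersections of the closed sets. The only minor point to keep in mind is that $B$ itself is not assumed relatively complete in $A$ (and we do not need to show this), so Lemma~\ref{commproj} must be applied with $B$ in the ``subalgebra'' slot and $B'$ (or $B_0$, $B_1$) in the ``relatively complete'' slot; the argument above respects this asymmetry.
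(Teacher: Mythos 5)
Your proposal is correct and takes essentially the same approach as the paper: both reduce to Lemma~\ref{commproj} and observe that $\pi_+^{B'}(x)$ lands in each of $B_0$ and $B_1$ separately because $\mc{B}$ is pairwise commuting. The paper phrases it by directly exhibiting $z=\pi_+^{C}(x)$ as the required interpolant rather than restating the projection criterion, but the content is identical.
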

\begin{proof}
Let $C\in\mc{B}$. Suppose that $x\in B_0\cap B_1$, $y\in C$, and $x\leq y$.
By symmetry, \istst\ $[x,y]\cap B_0\cap B_1\cap C$ is nonempty.
Let $z=\pi_+^{C}(x)$, which is in $[x,y]\cap C$.
By Lemma~\ref{commproj}, $z$ is also in each of $B_0$ and $B_1$.
\end{proof}

\begin{definition}\
\itz
\item 
Given a partial algebra $B$, call a subalgebra $C$ of $B$
\emph{cyclic} if, for some $x\in C$, $C$ is the smallest 
subalgebra of $B$ that contains $\{x\}$.
\item
Given a boolean algebra $A$, we say that a partial algebra $B$
is \emph{strongly $A$-commuting} if $B$ has the same
universe as $A$, $B$ is locally finite, and, 
for all cyclic subalgebras $F$ and $G$ of $B$,
we have $F\commute G$ as suborders of $(A,\leq_A)$.
\zti
\end{definition}

\begin{theorem}\label{atomcomm}
Given a boolean algebra $A$,
\itz
\item 
$A$ has the FN if and only if there is a strongly
$A$-commuting algebra;
\item $A$ has the SFN if and only if there is a
strongly $A$-commuting algebra expanding $A$.
\zti
Moreover, the above is true if we replace
``strongly $A$-commuting algebra'' with
``strongly $A$-commuting partial algebra.''
\end{theorem}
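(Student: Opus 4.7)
The plan is to prove each biconditional by constructing an explicit algebra in the forward direction and reading off interpolation data from cyclic subalgebras in the reverse direction. The partial-algebra versions then follow automatically: the forward constructions yield genuine algebras, while the reverse arguments use only the partial-algebra hypothesis.

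For both $(\Leftarrow)$ directions, I would suppose $B$ is a strongly $A$-commuting partial algebra with universe $A$, and for each $x\in A$ let $C_x$ be the cyclic subalgebra of $B$ generated by $\{x\}$; local finiteness makes each $C_x$ finite. Then $x\mapsto C_x$ should be an FN map, because for $x\leq_A y$ the hypothesis $C_x\commute C_y$ together with $x\in C_x$ and $y\in C_y$ forces $[x,y]\cap C_x\cap C_y\ne\vn$. If $B$ moreover expands $A$, every finite subalgebra of $B$ is a finite boolean subalgebra of $A$, and these form a cofinal family; writing each such $F$ as $\bigcup_{x\in F}C_x$, Proposition~\ref{communion} yields $F\commute G$ for any two of them, witnessing the SFN.

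For the FN $\Rightarrow$ direction, I would take a transitive FN map $f$ via Lemma~\ref{transfn} and arrange $x\in f(x)$ (preserved by replacing $f(x)$ with $f(x)\cup\{x\}$). Setting $B=(A,\{f_i\}_{i<\om})$ with each $f_i$ a unary operation enumerating $f(x)$ in a fixed fashion, transitivity gives $f_i(y)\in f(y)\subseteq f(x)$ for $y\in f(x)$, so each $f(x)$ is closed under every $f_i$ and is the cyclic subalgebra generated by $\{x\}$; the same transitivity makes $\bigcup_{x\in S}f(x)$ a finite subalgebra of $B$ for every finite $S$, so $B$ is locally finite. Commuting of $f(x)$ and $f(y)$ as suborders of $A$ then follows from the FN property: for $a\in f(x)$ and $b\in f(y)$ with $a\leq_A b$, the FN witness in $f(a)\cap f(b)$ lies in $f(x)\cap f(y)$ by transitivity.

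For the SFN $\Rightarrow$ direction, I would start with a pairwise commuting cofinal family $\mc{C}$ of finite subalgebras of $A$. Finite boolean subalgebras are automatically relatively complete, so iterating Lemma~\ref{commint} lets me close $\mc{C}$ under finite intersections while preserving pairwise commuting. For each finite $S\subseteq A$, picking any $F_0\in\mc{C}$ with $S\subseteq F_0$ (cofinality) shows that $F_S=\bigcap\{F\in\mc{C}:S\subseteq F\}$ stabilizes to an element of $\mc{C}$; writing $F_x$ for $F_{\{x\}}$, the coherence $y\in F_S\Rightarrow F_y\subseteq F_S$ follows because any $F\in\mc{C}$ with $S\subseteq F$ also contains $y$. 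I would then let $B$ expand $A$ by unary operations $\phi_i$ enumerating each $F_x$; coherence makes each $F_x$ closed under every $\phi_i$, so the cyclic subalgebra generated by $\{x\}$ is exactly $F_x\in\mc{C}$, and these pairwise commute. The same coherence makes $F_S$ closed under every $\phi_i$, so $F_S$ is a finite subalgebra of $B$ containing $S$, giving local finiteness. The main obstacle is exactly this SFN construction, where local finiteness has to mesh with pairwise commuting through a single choice of operations; intersection-closure of $\mc{C}$ together with the coherence $y\in F_S\Rightarrow F_y\subseteq F_S$ are precisely what force the $F_x$'s to serve simultaneously as cyclic subalgebras and as witnesses of local finiteness.
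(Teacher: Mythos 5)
Your proposal is correct and follows essentially the same route as the paper's proof: transitive FN map gives the cyclic structure (with $f(x)$ as the cyclic subalgebra of $x$) in one direction, the cyclic subalgebras give the FN map or the commuting cofinal family in the other, and the SFN directions pass through Lemma~\ref{commint} to get intersection-closure and Proposition~\ref{communion} to lift commuting from cyclic subalgebras to arbitrary finite subalgebras. The paper's version only needs the smallest member of the intersection-closed $\mc{C}$ containing $\{x\}$ (your $F_{\{x\}}$) rather than the stronger finite-intersection stabilization you work through, but the content is identical.
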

\begin{proof}
If $f$ is a transitive FN map on $A$, then, letting
$(f_n(x))_{n<\om}$ surject from $\om$ to $f(x)$ for
each $x\in A$, the algebra $B$ with universe $A$
and set of functions $\{f_n:n<\om\}$ is strongly 
$A$-commuting.

Conversely,
given a strongly $A$-commuting partial algebra $B$,
construct an FN map $f$ by letting $f(x)$ be 
the minimal subalgebra of $B$ containing $\{x\}$,
for each $x\in A$.

Suppose $\mc{C}$ is a pairwise commuting cofinal 
family of finite subalgebras of $A$. By Lemma~\ref{commint},
we may assume that $\mc{C}$ is closed with respect to
pairwise intersection. For each $x\in A$, let $\mc{C}(x)$
denote the smallest element of $\mc{C}$ that contains $\{x\}$;
let $(f_n(x))_{n<\om}$ surject from $\om$ to $\mc{C}(x)$.
The expansion of $A$ formed by the adding the functions
from $\{f_n:n<\om\}$ is strongly $A$-commuting.

Conversely,
suppose that $B$ is a strongly $A$-commuting expansion of $A$.
Let $\mc{C}$ denote the set of finite subalgebras of $B$. 
Since $B$ is locally finite, $\mc{C}$ is a cofinal family
of finite subalgebras of $A$ (provided we identify each
$C\in\mc{C}$ with the subalgebra of $A$ that has the same 
universe). Moreover, by Proposition~\ref{communion},
$\mc{C}$ is pairwise commuting.
\end{proof}

Observe that adapting the proof of Lemma~\ref{transfn} 
to build a strongly $A$-commuting expansion of $A$
would require not only that $A[M]$ be locally finite, 
but also that $A[M]$ remain locally finite after adding 
a partial function that maps each $x\in A$ to its
immediate $\sqsubset$-predecessor, if one exists.

We shall need the next lemmas in Section~\ref{bigproof}.

\begin{lemma}\label{rcmeet}
If $A\subrc B$, $a\in A$, $b\in B$,
and $\pi_+^A(b)=x$, then 
$\pi_+^A(a\wedge b)=a\wedge x$.
\end{lemma}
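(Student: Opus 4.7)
The plan is to verify the two defining properties of $\pi_+^A(a\wedge b)$: first that $a\wedge x$ is an upper bound of $a\wedge b$ in $A$, and second that it is the minimum such element.

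The first property is immediate: $a\wedge x$ lies in $A$ since $A$ is a subalgebra containing both $a$ and $x$, and $a\wedge x\geq a\wedge b$ because $x\geq b$.

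For the minimality, I would take an arbitrary $y\in A$ with $y\geq a\wedge b$ and show that $y\geq a\wedge x$. The key trick is to form $y\vee(-a)\in A$. Using the distributive identity $(a\wedge b)\vee(-a)=b\vee(-a)$, we have
\[
y\vee(-a)\geq (a\wedge b)\vee(-a)=b\vee(-a)\geq b.
\]
Since $y\vee(-a)\in A$ dominates $b$, the minimality of $x=\pi_+^A(b)$ gives $y\vee(-a)\geq x$. Meeting both sides with $a$ and simplifying $a\wedge(y\vee(-a))=a\wedge y$ yields $a\wedge y\geq a\wedge x$, and since $y\geq a\wedge y$, transitivity gives $y\geq a\wedge x$, as required.

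There is no real obstacle here; the entire argument is a short boolean computation. The only point that demands any care is the ``relativization'' step of replacing $y$ by $y\vee(-a)$ so as to exploit the hypothesis $\pi_+^A(b)=x$ on the unconstrained element $b$ rather than on the product $a\wedge b$.
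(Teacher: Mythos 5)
Your argument is correct and is essentially the same as the paper's: both verify minimality by passing from an arbitrary upper bound $y\in A$ of $a\wedge b$ to the element $y\vee -a\in A$, which dominates $b$ and hence $x$, then meeting with $a$. The paper simply presents this as a compact chain of implications rather than a narrative.
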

\begin{proof}
\begin{align*}
a\wedge b\leq y\in A
&\Rightarrow b\leq y\vee-a\in A\\
&\Rightarrow b\leq x\leq y\vee -a\\
&\Rightarrow a\wedge b\leq a\wedge x\leq y\qedhere
\end{align*}
\end{proof}

\begin{definition}
Given a boolean algebra $A$, a \lapps{\lm}\ $(M_\af)_{\af<\eta}$, 
and $x\in A$, let $\sigma_+^\vn(x,M)=x$ and, 
for all $(t_0,\ldots,t_n)\in\om^{<\om}$, let 
$$\sigma_+^t(x,M)=(\pi_+^{t_n}\circ\pi_+^{t_{n-1}}\circ\cdots\circ\pi_+^{t_0})(x,M)$$
if the righthand side exists. Let $\varsigma_+(x,M)$ denote 
the set of all $\sigma_+^t(x,M)$ that exist. 
Likewise define $\sigma_-^t(x,M)$ and $\varsigma_-(x,M)$. 
We may suppress the dependence of $\varsigma_\pm$ and $\sigma_\pm^t$
on $M$ when convenient. 
\end{definition}

Observe that if $s$ is a strict initial segment of $t$
and $\sigma_+^t(x)$ exists, then $t_{\card{s}}<\daleth(\rho(\sigma_+^s(x)))$
and $\rho(\sigma_+^t(x))<\rho(\sigma_+^s(x))$.
Hence, by K\"onig's Lemma, $\varsigma_+(x)$ is finite; 
likewise, $\varsigma_-(x)$ is finite.

\begin{lemma}\label{rclimit}
Suppose we have a boolean algebra $A$
and a \lapps{\lm}\ $(M_\af)_{\af<\eta}$ such that
$\pi_\pm^i(x)$ exist for all $x\in A\cap\bigcup_{\af<\eta}M_\af$
and all $i<\daleth(\rho(x))$.
Then, for every $B\subalg A[M]$ where $B$ is of the form 
$A\cap\bigcup_{\af\in I}M_\af$, we have 
$\pi_+^B(x)=\bigwedge(B\cap\varsigma_+(x))$ and
$\pi_-^B(x)=\bigvee(B\cap\varsigma_-(x))$
for all $x\in A\cap\bigcup_{\af<\eta}M_\af$.
\end{lemma}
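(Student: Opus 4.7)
I would prove the $\pi_+^B$-identity directly, then deduce the $\pi_-^B$-identity from it by De Morgan duality, noting that $\pi_-^B(x) = -\pi_+^B(-x)$ and $\sigma_-^t(x) = -\sigma_+^t(-x)$ together give $B \cap \varsigma_-(x) = \{-y : y \in B \cap \varsigma_+(-x)\}$. Assuming $I \neq \vn$ (otherwise $B = \vn$ and the claim is vacuous), set $y := \bigwedge(B \cap \varsigma_+(x))$. To see that $y$ is well-defined and lies in $B$ above $x$: $\varsigma_+(x)$ is finite by the observation following its definition, and iterating $\pi_+^i$ maximally from $x$ strictly decreases $\rho$ and so eventually produces some $w \in M_0$; by Lemma~\ref{lappsorder}, $M_0 \subseteq M_\af$ for every $\af < \eta$, so $w \in B$. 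Thus $B \cap \varsigma_+(x)$ is a nonempty finite subset of the boolean subalgebra $B$, giving $y \in B$; and $y \geq x$ because each $\sigma_+^t(x) \geq x$ by induction on the length of $t$.

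The remaining task is to show $y \leq z$ whenever $z \in B$ and $z \geq x$. The crucial observation is a simple rigidity principle: if $x \notin B$, then $x \notin M_\beta$ for any $\beta \in I$, so $\rho(x) > \beta$ for every $\beta \in I$; and $z \in B$ forces $z \in M_\gamma$ for some $\gamma \in I$, so $\rho(z) \leq \gamma < \rho(x)$. Hence in the only nontrivial subcase, $\rho(z) < \rho(x)$ is automatic. Now induct on $\rho(x)$: if $x \in B$, then $x \in B \cap \varsigma_+(x)$ and $y \leq x \leq z$; otherwise $\rho(z) < \rho(x)$, so Lemma~\ref{strata} places $z$ in some $M_{\rho(x), i}$ with $i < \daleth(\rho(x))$, giving $\pi_+^i(x) \leq z$. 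Setting $x' := \pi_+^i(x)$, we have $\rho(x') < \rho(x)$ and $\varsigma_+(x') \subseteq \varsigma_+(x)$, so the inductive hypothesis applied to $(x', z)$ yields $\bigwedge(B \cap \varsigma_+(x')) \leq z$; since $B \cap \varsigma_+(x') \subseteq B \cap \varsigma_+(x)$, this forces $y \leq z$.

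The main conceptual obstacle is spotting the rigidity principle; without it, one might attempt a more involved lexicographic induction on $(\rho(z), \rho(x))$ and get bogged down handling the apparently delicate case $\rho(x) = \rho(z)$, which the principle reveals to be vacuous whenever $x \notin B$.
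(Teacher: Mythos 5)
Your ``rigidity principle'' is where the argument breaks. You assert that $x\notin B$ implies $\rho(x)>\beta$ for every $\beta\in I$, inferring this from the fact that $x\notin M_\beta$ for each $\beta\in I$. But $x\notin M_\beta$ only gives $\rho(x)\neq\beta$, not $\rho(x)>\beta$: the models $M_\af$ in a \lapps{\lm}\ are not linearly ordered by inclusion once $\eta>\lm$, so one can perfectly well have $\rho(x)<\beta$ together with $x\notin M_\beta$ --- this happens precisely when $\rho(x)\notin M_\beta$ (Lemma~\ref{lappsorder}). Consequently the case $\rho(z)\geq\rho(x)$ with $x\notin B$ is \emph{not} vacuous, and your induction simply has no clause covering it. The special subcase $\rho(z)=\rho(x)$ does happen to be impossible (by Lemma~\ref{rhodef}, $z\in M_\gamma$ with $\gamma\in I$ gives $M_{\rho(z)}\subseteq M_\gamma$, so $\rho(x)=\rho(z)$ would force $x\in M_{\rho(z)}\subseteq M_\gamma$, hence $x\in B$), but that reasoning does not extend to $\rho(z)>\rho(x)$, because $M_{\rho(x)}$ need not be contained in $M_{\rho(z)}$ or in $M_\gamma$.

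A telling symptom is that your proof never uses the hypothesis that $B\subalg A[M]$ rather than merely $B\subalg A$ --- i.e., you never use that $B$ is closed under the projections $\pi_\pm^i$. The paper's proof uses exactly this to handle the case you are missing: when $\rho(x)<\rho(z)$, one observes $x\in M_{\rho(z),i}$ for some $i<\daleth(\rho(z))$, hence $x\leq\pi_-^i(z)\leq z$; since $B$ is closed under $\pi_-^i$, the element $\pi_-^i(z)$ lies in $B$ and has strictly smaller rank than $z$, so one can induct on $\max\{\rho(x),\rho(z)\}$ rather than on $\rho(x)$ alone. Your handling of the case $\rho(z)<\rho(x)$ via $\pi_+^i(x)$ and the monotonicity $\varsigma_+(\pi_+^i(x))\subseteq\varsigma_+(x)$ matches the paper's Case 3 and is fine; you need to add the symmetric descent on $z$ via $\pi_-^i$ (and switch the induction to $\max\{\rho(x),\rho(z)\}$) to close the gap.
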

\begin{proof}
Suppose that $x\in A\cap\bigcup_{\af<\eta}M_\af$ and $x\leq y\in B$. 
By symmetry, \istst\ $x_+^B\leq y$ where
$x_+^B=\bigwedge(B\cap\varsigma_+(x))$.
Proceed by induction on $\max\{\rho(x),\rho(y)\}$.
Choose $\af\in I$ such that $y\in M_\af$.
By Lemma~\ref{rhodef}, $M_{\rho(y)}\subseteq M_\af$;
hence, $A\cap M_{\rho(y)}\subseteq B$.
If $\rho(x)=\rho(y)$, then $x\in B$, in which case $x_+^B=x\leq y$.
If $\rho(x)<\rho(y)$, then $x\in M_{\rho(y),i}$ for
some $i$, in which case $x\leq \pi_-^i(y)\in B$
and $\rho(\pi_-^i(y))<\rho(y)$; by induction,
$x_+^B\leq\pi_-^i(y)\leq y$.
If $\rho(y)<\rho(x)$, then $y\in M_{\rho(x),i}$ for some $i$,
in which case $z\leq y$ and $\rho(z)<\rho(x)$ where $z=\pi_+^i(x)$;
by induction, $z_+^B\leq y$; hence, $x_+^B\leq z_+^B\leq y$ 
because $\varsigma_+(z)\subseteq\varsigma_+(x)$.
\end{proof}

\begin{corollary}\label{rcnested}
Suppose we have a boolean algebra $A$
and a \lapps{\lm}\ $(M_\af)_{\af<\eta}$ such that
$\rho(x)$ and $\pi_\pm^i(x)$ exist 
for all $x\in A$ and all $i<\daleth(\rho(x))$.
Further suppose that $\nu$ is a regular uncountable cardinal,
$A,M\in P\elemsub H(\nu)$, and $\lm\cap P\in\lm+1$.
Then, for all $x\in A\setminus P$,
\begin{align*}
\pi_+^P(x)&=\bigwedge_{i<\daleth(\rho(x,M))}\pi_+^P(\pi_+^i(x,M))
\text{ and }\\
\pi_-^P(x)&=\bigvee_{i<\daleth(\rho(x,M))}\pi_-^P(\pi_-^i(x,M)).
\end{align*}
\end{corollary}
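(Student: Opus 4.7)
The plan is to apply Lemma \ref{rclimit} to the set $B := A \cap P$, after first recognizing $B$ as having the form $A \cap \bigcup_{\af \in I} M_\af$ required by that lemma (with $I = \eta \cap P$). The forward inclusion $A \cap P \subseteq A \cap \bigcup_{\af \in \eta \cap P} M_\af$ is immediate from elementarity: since $M \in P$, the ordinal $\rho(y,M)$ is definable from $y$ and $M$, so every $y \in A \cap P$ satisfies $\rho(y) \in \eta \cap P$ and $y \in M_{\rho(y)}$. For the reverse inclusion I would fix $\af \in \eta \cap P$ and observe that $M_\af \in P$ with $\card{M_\af} < \lm$. The hypothesis $\lm \cap P \in \lm + 1$ splits into the cases $\lm \cap P = \lm$ and $\lm \cap P \in \lm$, but in both cases $\card{M_\af} \subseteq \lm \cap P \subseteq P$; a bijection $\card{M_\af} \to M_\af$ lying in $P$ (which exists by elementarity) then forces $M_\af \subseteq P$.

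Next I would verify that $B \subalg A[M]$ so that Lemma \ref{rclimit} genuinely applies. Closure under the boolean operations is standard since $A \in P \elemsub H(\nu)$. Closure under each $\pi_\pm^i(\cdot, M)$ is another elementarity argument: $\pi_+^i(y, M) = \pi_+^{M_{\rho(y),i}}(y)$ is definable in $P$ from $y$ and $M$, so whenever $y \in B$ and the value exists (which it does by hypothesis), it lies in $A \cap P = B$. In particular, $\pi_+^P(y) = \pi_+^B(y)$ for every $y \in A$, so both sides of the corollary's equations coincide with the output of Lemma \ref{rclimit} applied to $B$.

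Applying Lemma \ref{rclimit} gives $\pi_+^P(y) = \bigwedge(B \cap \varsigma_+(y,M))$ for every $y \in A$. Since $x \in A \setminus P$ means $x \notin B$, unwinding the recursive definition of $\sigma_+^t$ yields $\varsigma_+(x,M) = \{x\} \cup \bigcup_{i < \daleth(\rho(x,M))} \varsigma_+(\pi_+^i(x,M),M)$, and hence $B \cap \varsigma_+(x,M) = \bigcup_{i < \daleth(\rho(x,M))} \bigl(B \cap \varsigma_+(\pi_+^i(x,M),M)\bigr)$. Taking meets of both sides and applying Lemma \ref{rclimit} once more to each $\pi_+^i(x,M)$ produces the first displayed equation; the second follows by the symmetric $\pi_-, \vee$ argument. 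I expect the main obstacle to be the reverse inclusion in the identification $A \cap P = A \cap \bigcup_{\af \in \eta \cap P} M_\af$, since this is the only step where the hypothesis $\lm \cap P \in \lm + 1$ is actually used, and it must handle both the limit and successor cases uniformly; once this identification and the $A[M]$-subalgebra check are in hand, Lemma \ref{rclimit} does all the remaining work.
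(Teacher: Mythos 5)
Your proposal is correct. The paper states this as a corollary to Lemma~\ref{rclimit} without an explicit proof, and your argument fills in exactly the intended details: identify $A\cap P$ with $A\cap\bigcup_{\af\in\eta\cap P}M_\af$ (using $\lm\cap P\in\lm+1$ to show each $M_\af$ with $\af\in P$ is a subset of $P$), check that $A\cap P$ is closed under the operations $\pi_\pm^i(\cdot,M)$ so that it is a subalgebra of $A[M]$, apply Lemma~\ref{rclimit}, and then unwind the recursion $\varsigma_\pm(x)=\{x\}\cup\bigcup_i\varsigma_\pm(\pi_\pm^i(x))$ together with $x\notin P$ to split the meet (respectively join) into the displayed finite conjunction (respectively disjunction). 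This is the natural and, as far as I can see, the only reasonable route.
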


\section{Embeddings and colimits}

In this section, we collect some facts and specify
some notation concerning various colimits and
various classes of boolean embeddings.
We refer the reader to \cite{hs} and \cite{kopp}
for additional background information.

\begin{definition}\
\itz
\item Say that a sequence $(F_i)_{i\in I}$ of subalgebras of a
fixed boolean algebra is \emph{independent} if, for all
finite $J\subseteq I$ and all $x\in\prod_{j\in J}F_j$, if
$\bigwedge_{j\in J} x(j)=0$, then $x(j)=0$ for some $j\in J$.
\item Say that a sequence $(x_i)_{i\in I}$ of elements of a
fixed boolean algebra is \emph{independent} if 
$(\{x_i,-x_i,0,1\})_{i\in I}$ is independent.
\item Say that a boolean algebra $F$ is \emph{free} if it is generated
by the range of an independent sequence of elements of $F$.
\zti
\end{definition}

Fix once and for all a countably infinite free boolean algebra $\Fr_\om$ 
and an independent sequence $(\fr_n)_{n<\om}$ generating $\Fr_\om$ such
that $\Fr_\om$ and $\fr$ are definable in $H(\al_1)$ without parameters.
For each $S\subset\om$, let $\Fr_S$ denote the subalgebra of $\Fr_\om$
generated by $\{\fr_n:n\in S\}$.

\begin{definition}\
\itz
\item A \emph{boolean embedding} is an injective boolean homomorphism.
\item
Given two boolean algebras $C_0$ and $C_1$, a
\emph{coproduct} of $C_0$ and $C_1$ is a boolean algebra $C_0\oplus C_1$
with boolean embeddings 
$\oplus_0\colon C_0\rightarrow C_0\oplus C_1$ and
$\oplus_1\colon C_1\rightarrow C_0\oplus C_1$ such that
$\oplus_0[C_0]$ and $\oplus_1[C_1]$ are independent and
$\bigcup_{i<2}\oplus_i[C_i]$ generates $C_0\oplus C_1$.
These embeddings are called \emph{cofactor maps}.
\zti
\end{definition}

Coproducts always exist uniquely up to isomorphism.

\begin{definition}\
\itz
\item
Say that a boolean embedding $f\colon A\rightarrow B$ is
\emph{free} if there is an infinite free boolean algebra 
$F$ and a coproduct $A\oplus F$ such that $A\oplus F=B$
and $\oplus_0=f$.
\item 
If $\id_A$ is free embedding from $A$ to $B$, then
we say that $B$ is a \emph{free extension} of $A$
and write $A\subfree B$.
\item
Given boolean algebras $A\subalg B$, we say that $A$
\emph{splits} in $B$ if every ultrafilter of $A$ extends to
at least two ultrafilters of $B$. We say that
$A$ \emph{splits perfectly} in $B$ if, for all finite
$F\subseteq B$, the subalgebra generated by $A\cup F$
splits in $B$.
\item
We say that a boolean embedding $f\colon A\rightarrow B$
\emph{splits perfectly} if $f[A]$ splits perfectly in $B$.
\zti
\end{definition}

Every free embedding is relatively complete and splits
perfectly. Conversely, we have \emph{Sirota's Lemma}~\cite{sirota}, 
\ie\ if $f\colon A\rightarrow B$ is relatively complete and
perfectly splitting and $B$ is generated by $f[A]\cup C$
for some countable $C$, then $f$ is free. Also note that
the classes of relatively complete, perfectly splitting,
and free embeddings are each closed with respect to composition.
Moreover, for any composite boolean embedding $f\circ g$,
if $f$ is perfectly splitting, then so is $f\circ g$.

\begin{definition}\
\itz
\item A \emph{quotient} of a boolean algebra $A$ with
respect to an ideal $I$ is a boolean algebra $B$ with
a surjective homomorphism $f\colon A\rightarrow B$
with kernel $I$; $f$ is called the \emph{quotient map}
and $f(x)$ may be denoted by $x/I$.
\item
Given boolean embeddings $f\colon C\rightarrow A$ and
$g\colon C\rightarrow B$, define a \emph{pushout} 
$A \underset{C}{\pushout} B$ of $f$ and $g$ to be a quotient 
of a coproduct $A\oplus B$ with respect to the ideal $I$
generated by $\{\oplus_0(f(c))\wedge\oplus_1(g(-c)):c\in C\}$.
Thus, $A \underset{C}{\pushout} B$ is a colimit of the diagram
formed by $f$ and $g$.
\item
Given $f$ and $g$ as above such that also
$f=g=\id_{A\cap B}$, let $A\pushout B$ more
specifically denote a pushout of $f$ and $g$ such that
$\oplus_0(a)/I=a$ and $\oplus_1(b)/I=b$ for all $a\in A$
and $b\in B$.
\zti
\end{definition}

If $A$ and $B$ are boolean algebras such that their
intersection $A\cap B$ is also a common subalgebra,
then $A\pushout B$ exists as above and is characterized 
up to isomorphism as a boolean algebra $D$ in which
$A$ and $B$ are commuting subalgebras and
$A\cup B$ generates $D$.

\begin{lemma}\label{freepushout}
If $A=C_0\cap C_1$ and, for each $i<2$, 
we have $C_i=A\oplus B_i$ with cofactor
maps $\oplus_0=\id_A$ and $\oplus_1=\id_{B_i}$, then
$(A,B_0,B_1)$ is independent in $C_0\pushout C_1$
\end{lemma}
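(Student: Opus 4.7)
The plan is to use the ultrafilter characterization of commuting subalgebras noted just after the definition of $\commute$: if $P, Q$ are commuting subalgebras of $R$ and $U$ is an ultrafilter of $P$, $V$ an ultrafilter of $Q$ with $U \cap Q = V \cap P$, then $U \cup V$ extends to an ultrafilter of $R$. Set $D = C_0 \pushout C_1$. By the pushout's characterization in the paragraph preceding the lemma, $C_0$ and $C_1$ sit inside $D$ as commuting subalgebras that together generate $D$, and their intersection inside $D$ equals $A$ (via the cofactor-map convention $\oplus_0(a)/I = a = \oplus_1(a)/I$ for $a \in A$). Independence of $(A, B_0, B_1)$ in $D$ reduces to the triple case: for all nonzero $a \in A$, $b_0 \in B_0$, $b_1 \in B_1$, the meet $a \wedge b_0 \wedge b_1$ is nonzero in $D$. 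The singleton and pair subcases follow by setting one or more of these factors equal to $1$, which is a nonzero choice from its algebra.

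For the triple case, given such nonzero $a, b_0, b_1$, I would produce an ultrafilter of $D$ containing all three in three steps. First, by independence of $A$ and $B_0$ in $C_0 = A \oplus B_0$, one has $a \wedge b_0 \neq 0$ in $C_0$, so pick an ultrafilter $U_0$ of $C_0$ through $a \wedge b_0$ and set $V = U_0 \cap A$, which is an ultrafilter of $A$ containing $a$. Second, pick an ultrafilter $W$ of $B_1$ through $b_1$; independence of $A$ and $B_1$ in $C_1 = A \oplus B_1$ guarantees that $V \cup W$ has the finite intersection property, so it extends to an ultrafilter $U_1$ of $C_1$ with $U_1 \cap A = V$ and $b_1 \in U_1$. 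Third, since $C_0 \cap C_1 = A$ inside $D$, the identity $U_0 \cap C_1 = U_0 \cap A = V = U_1 \cap A = U_1 \cap C_0$ holds, so the ultrafilter characterization of commuting yields an ultrafilter $U$ of $D$ extending $U_0 \cup U_1$; this $U$ contains $a$, $b_0$, and $b_1$, as required.

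The main obstacle is verification rather than insight: one must trust the pushout identification realizing $C_0, C_1$ inside $D$ as commuting subalgebras whose intersection is exactly $A$. Both facts are immediate consequences of the characterization stated in the paragraph defining $A \pushout B$, so the argument reduces to a direct application of the ultrafilter-extension machinery already assembled in the paper.
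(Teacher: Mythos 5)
Your proof is correct, but it takes a genuinely different route from the paper. You argue in the Stone dual: starting from nonzero $a\in A$, $b_0\in B_0$, $b_1\in B_1$, you build an ultrafilter of $C_0\pushout C_1$ through $a\wedge b_0\wedge b_1$ by first finding compatible ultrafilters $U_0$ of $C_0$ and $U_1$ of $C_1$ that agree on $A$, and then invoking the ultrafilter characterization of commuting to glue them. The paper instead argues contrapositively and purely algebraically: from $a\wedge b_0\wedge b_1=0$ it extracts an interpolant $\hat a\in A$ with $a\wedge b_0\leq \hat a\leq -b_1$ (using $C_0\commute C_1$ directly), then applies independence of $(A,B_1)$ to conclude $\hat a=0$ or $b_1=0$, and in the first case applies independence of $(A,B_0)$ to $a\wedge b_0=0$. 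The paper's argument is shorter and stays entirely in the lattice order; yours is slightly longer but perhaps more transparent if one is comfortable thinking in terms of points of the Stone space, and it does exercise the ultrafilter reformulation the paper notes but does not otherwise use. One small stylistic point: your reduction to the triple case by setting missing factors to $1$ is fine, but the paper's contrapositive formulation handles all of $J\subseteq\{0,1,2\}$ at once without that case split.
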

\begin{proof}
Suppose that $a\in A$, $b_0\in B_0$, $b_1\in B_1$,
and $a\wedge b_0\wedge b_1=0$ in $C_0\pushout C_1$. Then
$a\wedge b_0\leq \hat a\leq -b_1$ for some $\hat a\in A$
because $C_0\commute C_1$. Hence, $\hat a\wedge b_1=0$.
Since $(A,B_1)$ is independent, $\hat a=0$ or $b_1=0$.
If $\hat a=0$, then $a\wedge b_0=0$, in which
case $a=0$ or $b_0=0$ because $(A,B_0)$ is independent.
Thus, $a=0$, $b_0=0$, or $b_1=0$.
\end{proof}

\begin{definition}
Given a (nonempty) directed set $\mc{D}$ of boolean algebras
such that $A\subseteq B$ implies $A\subalg B$
for all $A,B\in\mc{D}$, endow the union $\bigcup\mc{D}$ 
of (the universes of) the algebras in $\mc{D}$
with the unique algebraic operations 
that make the inclusions
$(\id_A\colon A\rightarrow\bigcup\mc{D})_{A\in\mc{D}}$
a colimit of the inclusions 
$(\id_A\colon A\rightarrow B)_{\{A\subseteq B\}\subseteq\mc{D}}$,
namely, $\wedge_{\bigcup\mc{D}}=\bigcup_{A\in\mc{D}}\wedge_A$,
$\vee_{\bigcup\mc{D}}=\bigcup_{A\in\mc{D}}\vee_A$,
${-}_{\bigcup\mc{D}}=\bigcup_{A\in\mc{D}}{-}_A$,
$0_{\bigcup\mc{D}}=0_B$, and $1_{\bigcup\mc{D}}=1_B$
for some $B\in\mc{D}$.
\end{definition}

\section{Proof of main theorem}\label{bigproof}
By Theorem~\ref{mlocfin}, it is sufficient to construct
a boolean algebra $\Omega$ of size $\al_2$ such that $\Omega$ has
the FN, but $\Omega[N]$ is not locally finite
for some \lapps{\oml}\ $(N_\af)_{\af<\om_2}$ with $\Omega\in N_0$.
We will construct in parallel a sequence $(A_\af)_{\af<\om_2}$ of 
countable boolean algebras and a \lapps{\oml}\ $(M_\af)_{\af<\om_2}$ 
such that, for all $\af<\om_2$, we have
\begin{gather}
\label{mknowsa} 
A_\af\Subset M_\af\elemsub H(\al_2),\\
\label{newainnewm} 
A_\af\cap\bigcup_{\beta<\af}M_\beta 
=\bigcup_{i<\daleth(\af)}A'_{\af,i}\text{, and}\\
\label{oldaparts} 
\forall i<\daleth(\af)\,\ A'_{\af,i}\subalg A_\af
\end{gather}
where $A'_{\af,i}=\bigcup\{A_\beta:\beta\in J'_i(\af)\}$ and
$S\Subset T$ means that $S$ is a coinfinite subset of $T$.
Also define $A_{\af,i}=\bigcup\{A_\beta:\beta\in J_i(\af)\}$.

\begin{claim}
Given $A$ and $M$ as above, $\af,\beta<\om_2$, and $i<\daleth(\af)$,
we have that
\itz
\item $A_\af\cap A_\beta=A_\af\cap M_\beta$, 
\item $A_\af\subseteq A_\beta$ if $M_\af\subseteq M_\beta$, and
\item $A'_{\af,i}=A_\af\cap A_{\af,i}$.
\zti
\end{claim}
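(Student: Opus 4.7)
The plan is to prove the three bullets in the order (2), (1), (3), since the later bullets depend on the earlier ones. A key observation used throughout is that $\af\in M_\af$ for all $\af<\om_2$: by retrospectiveness, $(M_\gamma)_{\gamma<\af}\in M_\af$, and $\af$ is definable from this sequence as its length. Bullet (2) then follows quickly: assume $M_\af\subsetneq M_\beta$ (the case of equality forces $\af=\beta$ by Lemma~\ref{lappsorder}); Lemma~\ref{lappsorder} gives $\af\in\beta\cap M_\beta$, and letting $j$ be the unique index with $\af\in I_j(\beta)$, the pair $\af\in M_\af\cap(\af+1)$ witnesses $\af\in J_j(\beta)$, whence $\af\in J'_j(\beta)$ and $A_\af\subseteq A'_{\beta,j}\subseteq A_\beta$ by \eqref{oldaparts}.

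For bullet (1), the inclusion $A_\af\cap A_\beta\subseteq A_\af\cap M_\beta$ is immediate from $A_\beta\subseteq M_\beta$. For the reverse, I will induct on $\af$. The case $\af\leq\beta$ reduces to bullet (2). When $\beta<\af$, given $x\in A_\af\cap M_\beta\subseteq A_\af\cap\bigcup_{\gamma<\af}M_\gamma$, \eqref{newainnewm} produces some $\gamma<\af$ with $x\in A_\gamma$. Setting $\epsilon=\rho(x)$, Lemma~\ref{rhodef} yields $M_\epsilon\subseteq M_\gamma\cap M_\beta$, so $\epsilon\leq\min(\gamma,\beta)<\af$. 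The inductive hypothesis applied to $(\gamma,\epsilon)$ converts $x\in A_\gamma\cap M_\epsilon$ into $x\in A_\gamma\cap A_\epsilon\subseteq A_\epsilon$, and bullet (2) applied to $M_\epsilon\subseteq M_\beta$ delivers $A_\epsilon\subseteq A_\beta$, so $x\in A_\beta$.

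For bullet (3), the inclusion $A'_{\af,i}\subseteq A_\af\cap A_{\af,i}$ holds because each $\gamma\in J'_i(\af)$ has $\gamma<\af$ and $\gamma\in M_\af$, so bullet (2) hands over $A_\gamma\subseteq A_\af$ (and $A_\gamma\subseteq A_{\af,i}$ by definition). For the reverse, let $x\in A_\af\cap A_{\af,i}$, pick $\beta\in J_i(\af)$ with $x\in A_\beta$, and lift to $\delta\in I_i(\af)$ with $\beta\leq\delta$ and $\beta\in M_\delta$, so that $A_\beta\subseteq M_\delta$ and hence $x\in M_\delta$. Let $\epsilon=\rho(x)$. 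Then $\epsilon\leq\delta$, and elementarity places $\epsilon$ in both $M_\delta$ (either $\epsilon<\delta$ and is definable in $M_\delta$ from $x$ and $(M_\gamma)_{\gamma<\delta}$, or $\epsilon=\delta\in M_\delta$) and $M_\af$ (definable from $x\in M_\af$ and $(M_\gamma)_{\gamma<\af}$). Hence $\epsilon\in M_\delta\cap(\delta+1)\cap M_\af\subseteq J'_i(\af)$, and bullet (1) applied to $(\af,\epsilon)$ upgrades $x\in A_\af\cap M_\epsilon$ to $x\in A_\epsilon\subseteq A'_{\af,i}$.

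The main hurdle is organizing the induction in bullet (1) so that both $\gamma$ (produced by \eqref{newainnewm}) and $\epsilon=\rho(x)$ are honestly smaller than $\af$; the small but crucial input here is the identity $\af\in M_\af$, together with Lemma~\ref{rhodef}, which lets the rank $\rho(x)$ reflect simultaneously into $M_\delta$ and $M_\af$. Once the ranking apparatus is in place, the $M$-rank trick drives both the inductive step in bullet (1) and the reflection step in bullet (3) with little further work.
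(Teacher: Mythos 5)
Your overall strategy (prove (2) directly from~\eqref{oldaparts} first, then (1) by induction using $\rho$, then (3) from (1) and (2)) is sound and closely parallels the paper's, which instead proves (1) first and derives (2) from it as a corollary. However, there is a genuine gap in your treatment of bullet (1): the assertion that ``the case $\af\leq\beta$ reduces to bullet (2)'' does not hold as stated. Bullet (2) requires the hypothesis $M_\af\subseteq M_\beta$, and $\af\leq\beta$ alone does not imply this --- by Lemma~\ref{lappsorder}, $M_\af\subsetneq M_\beta$ is equivalent to $\af\in\beta\cap M_\beta$, and $\af\in M_\beta$ can fail even when $\af<\beta$. So you cannot conclude $A_\af\subseteq A_\beta$ without further work, and the inclusion $A_\af\cap M_\beta\subseteq A_\beta$ is not immediate in this case.

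The repair uses the same $\rho$-machinery you already deploy for $\beta<\af$: given $x\in A_\af\cap M_\beta$ with $\af<\beta$, set $\eps=\rho(x)\leq\af$. If $\eps=\af$, then Lemma~\ref{rhodef} (applied to $x\in M_\beta$) gives $M_\af=M_\eps\subseteq M_\beta$, and now bullet (2) genuinely applies to yield $A_\af\subseteq A_\beta$. If $\eps<\af$, then $x\in A_\af\cap\bigcup_{\gamma<\af}M_\gamma$, so~\eqref{newainnewm} produces $\gamma<\af$ with $x\in A_\gamma$, and the inductive hypothesis for $(\gamma,\beta)$ (which is legitimate since $\gamma<\af$ and you are inducting on the first coordinate) turns $x\in A_\gamma\cap M_\beta$ into $x\in A_\beta$. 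This is essentially what the paper does when it writes $x\in A_{\rho(x)}\subseteq A'_{\beta,j}\subseteq A_\beta$; your $\beta<\af$ argument is fine and your bullets (2) and (3) are correct, so this sub-case of bullet (1) is the only hole.
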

\begin{proof}
For the first subclaim,
we may assume we are not in the trivial case $\af=\beta$.
Since $A_\beta$ is countable, 
\eqref{mknowsa}\ implies $A_\beta\subseteq M_\beta$.
Therefore, $A_\af\cap A_\beta\subseteq A_\af\cap M_\beta$. 
To prove the converse inclusion, suppose $x\in A_\af\cap M_\beta$.
If $\beta<\af$, then, by \eqref{newainnewm}, 
$x\in A_\gamma$ for some $\gamma<\af$;
we may inductively assume $A_\gamma\cap M_\beta=A_\gamma\cap A_\beta$,
in which case $x\in A_\beta$.  If $\af<\beta$, then 
$\rho(x)\in M_\beta\cap I_j(\beta)\subseteq J'_j(\beta)$ 
for some  $j<\daleth(\beta)$; hence, 
by \eqref{newainnewm}\ and \eqref{oldaparts}, 
$x\in A_{\rho(x)}\subseteq A'_{\beta,j}\subseteq A_\beta$.
Thus, $A_\af\cap M_\beta\subseteq A_\af\cap A_\beta$.
Therefore, $A_\af\cap A_\beta=A_\af\cap M_\beta$.

We obtain the third subclaim
from the first subclaim and from \eqref{oldaparts}:
\begin{multline*}
A_\af\cap A_{\af,i}
=A_\af\cap\bigcup_{\beta\in J_i(\af)}A_\beta
=A_\af\cap\bigcup_{\beta\in J_i(\af)}M_\beta
=A_\af\cap M_\af\cap\bigcup_{\beta\in J_i(\af)}M_\beta\\
=A_\af\cap\bigcup_{\beta\in J'_i(\af)}M_\beta
=A_\af\cap\bigcup_{\beta\in J'_i(\af)}A_\beta
=A_\af\cap A'_{\af,i}
=A'_{\af,i}.
\end{multline*}
Also, the second subclaim holds because
if $\delta<\om_2$ and $M_\af\subseteq M_\delta$, 
then $A_\af\subseteq A_\delta$ because 
$A_\af\cap A_\delta=A_\af\cap M_\delta\supseteq A_\af\cap M_\af=A_\af$.
\end{proof}

By the above claim, since $\mc{I}_0(\om_2)$ is directed,
letting $\Omega=A_{\om_2,0}=\bigcup_{\af<\om_2}A_\af$, 
we obtain a boolean algebra of size at most $\al_2$ such that,
for all $\af<\om_2$, $\Omega\cap M_\af=A_\af\subalg \Omega$ and,
for all $i<\daleth(\af)$, $\Omega\cap M_{\af,i}=A_{\af,i}$ and 
$\Omega\cap M'_{\af,i}=A'_{\af,i}$.
Therefore, by Theorem~\ref{fnchar}, $\Omega$ will have the FN if
$A'_{\af,i}\subrc A_\af$ and $A_\af\commute A_\beta$ as suborders of $\Omega$ 
for all $\af,\beta<\om_2$ and $i<\daleth(\af)$. Therefore, 
$\Omega$ will have the FN if we have the following for all $\af<\om_2$. 
\begin{gather}
\label{newafreeext} 
\forall i<\daleth(\af)\,\ A'_{\af,i}\subfree A_\af.\\
\label{oldacomm}
\forall M_\beta,M_\gamma\in M_\af\,\  
A_\beta\commute A_\gamma\text{ as suborders of }A_\af.
\end{gather}
Note that \eqref{newafreeext}\ implies \eqref{oldaparts}.

At stage $0$, let $A_0=\Fr_\om\in M_0$.
At nonzero stages $\af<\om_2$, select a countable $M_\af\elemsub H(\al_2)$
such that $(A_\beta,M_\beta)_{\beta<\af}\in M_\af$.
If $\daleth(\af)=1$, then let $A_\af$ be a coproduct 
$A'_{\af,0}\oplus\Fr_\om$ such that $A'_{\af,0}\subalg A_\af\Subset M_\af$
and $A_\af\setminus A'_{\af,0}$ is disjoint from $\bigcup_{\beta<\af}M_\beta$. 
Clearly, \eqref{mknowsa}, \eqref{newainnewm}, and \eqref{newafreeext}\ 
are preserved.
Since $\mc{J}'_0(\af)$ is directed, \eqref{oldacomm}\ is preserved too.

Now suppose that $\daleth(\af)=2$. 
Let $A'_{\af,2}=\bigcap_{j<2}A'_{\af,j}$.
By Lemma~\ref{lappsint}, $A'_{\af,2}$ is a directed union
of common subalgebras of $A'_{\af,0}$ and $A'_{\af,1}$.

\begin{claim}\label{rcdir}
$A'_{\af,2}\subrc A'_{\af,i}$ for each $i<2$.
\end{claim}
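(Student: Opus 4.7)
By symmetry, it suffices to prove $A'_{\af,2}\subrc A'_{\af,0}$. By Lemma~\ref{lappsref} and Lemma~\ref{lappsint}, both $A'_{\af,0}=\bigcup\{A_\beta:\beta\in J'_0(\af)\}$ and $A'_{\af,2}=\bigcup\{A_\delta:\delta\in K'_{\{0,1\}}(\af)\}$ are directed unions of already-constructed $A_\bullet$'s. Given $x\in A'_{\af,0}$, I would pick $\beta\in J'_0(\af)$ with $x\in A_\beta$ and reduce to showing $A_\beta\cap A'_{\af,2}\subrc A_\beta$ with projections landing in $A'_{\af,2}$. Since $A_\beta\subseteq A'_{\af,0}$, we have $A_\beta\cap A'_{\af,2}=A_\beta\cap A'_{\af,1}$, and using the earlier subclaim $A_\beta\cap A_\delta=A_\beta\cap M_\delta$ together with $A'_{\af,1}=\bigcup_{\delta\in I'_1(\af)}A_\delta$, this becomes the directed union
\[
A_\beta\cap A'_{\af,1}=\bigcup_{\delta\in I'_1(\af)}(A_\beta\cap M_\delta).
\]

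The key case analysis is on whether some $\delta\in I'_1(\af)$ has $\beta\in M_\delta$. If so, Lemma~\ref{lappsorder} gives $M_\beta\subseteq M_\delta$, whence by the second subclaim $A_\beta\subseteq A_\delta\subseteq A'_{\af,1}$, so $A_\beta\subseteq A'_{\af,2}$ and $A_\beta\cap A'_{\af,2}=A_\beta$ is trivially relatively complete. Otherwise $\beta\notin M_\delta$ for every $\delta\in I'_1(\af)$; then for any such $\delta$ and any $y\in A_\beta\cap M_\delta$, elementarity forces $\rho(y)\in M_\delta$, and since $\rho(y)\leq\beta$ (from $y\in A_\beta\subseteq M_\beta$) while $\rho(y)\neq\beta$ (as $\beta\notin M_\delta$), we get $\rho(y)<\beta$. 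Hypothesis~\eqref{newainnewm} then places $y\in\bigcup_{i<\daleth(\beta)}A'_{\beta,i}$, and uniting over $\delta$ yields $A_\beta\cap A'_{\af,1}=\bigcup_{i<\daleth(\beta)}(A'_{\beta,i}\cap A'_{\af,1})$.

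Here $A_\beta\cap A'_{\af,1}$ is a boolean subalgebra of $A_\beta$ expressed as a union of the subalgebras $A'_{\beta,i}\cap A'_{\af,1}$; since no boolean algebra is the union of two proper subalgebras (a short symmetric-difference argument), we must have $A_\beta\cap A'_{\af,1}=A'_{\beta,i}\cap A'_{\af,1}$ for some $i<\daleth(\beta)$. By the inductive hypothesis~\eqref{newafreeext} at stage $\beta$, $A'_{\beta,i}\subfree A_\beta$ and hence $A'_{\beta,i}\subrc A_\beta$; transitivity of $\subrc$ then reduces the task to showing $A'_{\beta,i}\cap A'_{\af,1}\subrc A'_{\beta,i}$.

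The main obstacle is closing this reduction via strong induction on $\beta\in J'_0(\af)$. Concretely, $A'_{\beta,i}$ is a directed union of $A_\sigma$'s with $\sigma<\beta$ (indexed by $J'_i(\beta)$), and each such $\sigma$ still lies in $J'_0(\af)$: the corresponding witness $\epsilon\in I_i(\beta)$ satisfies $\epsilon<\beta<\cardhead{1}{\af}$, hence $\epsilon\in I_0(\af)$. The same decomposition applied at $\sigma$ in place of $\beta$ reproduces the setup one level deeper, and the well-founded induction on $\beta$ bottoms out at $\sigma=0$: since $A_0=\Fr_\om$ is definable in $H(\al_1)$ without parameters and hence contained in every $M_\delta$, we have $A_0\cap A'_{\af,1}=A_0$, trivially relatively complete in $A_0$.
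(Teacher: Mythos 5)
Your strategy is genuinely different from the paper's: you try to reduce, by a bare-hands strong induction on $\beta\in J'_0(\af)$ plus a clever use of the fact that a boolean algebra is not the union of two proper subalgebras, whereas the paper routes the whole thing through Lemma~\ref{rclimit}. The case analysis on whether $\beta\in M_\delta$ for some $\delta\in I'_1(\af)$, and the reduction $A_\beta\cap A'_{\af,1}=A'_{\beta,i}\cap A'_{\af,1}$, are attractive ideas. However, the argument has a real gap, which you yourself flag: the inductive step does not close. Granting the inductive hypothesis $A_\sigma\cap A'_{\af,1}\subrc A_\sigma$ for every $\sigma\in J'_i(\beta)$, it does not automatically follow that $A'_{\beta,i}\cap A'_{\af,1}\subrc A'_{\beta,i}$. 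A directed union of relatively complete subalgebras is not in general relatively complete: given $x\in A_\sigma$, one has $\pi_+^{A_\sigma\cap A'_{\af,1}}(x)$, but enlarging $\sigma$ to $\tau$ can only shrink this projection, so one needs a stability argument showing it does not strictly decrease cofinally. That stability is precisely what the paper's Lemma~\ref{rclimit} delivers, via the finiteness of $\varsigma_\pm(x)$; your induction reproduces the problem rather than solving it, and the base case $\sigma=0$ is never reached along a chain unless each step actually goes through.

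There is also a smaller unaddressed point at the outset: having $A_\beta\cap A'_{\af,2}\subrc A_\beta$ does not by itself tell you that $\pi_\pm^{A'_{\af,2}}(x)$ exists in $A'_{\af,0}$ and equals $\pi_\pm^{A_\beta\cap A'_{\af,2}}(x)$. The phrase ``with projections landing in $A'_{\af,2}$'' is exactly the content of Lemma~\ref{projrestrict}, but invoking it requires first establishing $A_\beta\commute A'_{\af,2}$. This is obtainable from the inductive hypothesis \eqref{oldacomm} at stages below $\af$ together with Proposition~\ref{communion} (indeed the paper's proof does exactly this for $A'_{\af,i}\cap A_{\beta,j}\commute A_\beta$), but it must be said. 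So the overall shape is salvageable only if you additionally prove a stability-of-projections statement across the directed system $\{A_\sigma:\sigma\in J'_0(\af)\}$ --- at which point you are effectively reproving Lemma~\ref{rclimit}, which is what the paper uses directly.
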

\begin{proof}
Fix $i<2$. By Lemma~\ref{strataint},
for each $\beta\in J'_i(\af)$ and $j<\daleth(\beta)$, 
$A'_{\af,i}\cap A_{\beta,j}=\bigcup\{A_\gamma:\gamma\in U\}$ 
for some $U\subseteq J'_i(\af)$.  By the inductive assumption 
of \eqref{oldacomm}\ for all stages before $\af$
and by the directedness of $\{A_\delta:\delta\in J'_i(\af)\}$,
$A_\gamma\commute A_\beta$ for all $\gamma\in U$.
Hence, by Proposition~\ref{communion},
$A'_{\af,i}\cap A_{\beta,j}\commute A_\beta$.
By the inductive assumption of \eqref{newafreeext} 
for all stages before $\af$, we have 
$A'_{\af,i}\cap A_{\beta,j}\cap A_\beta=A'_{\beta,j}\subrc A_\beta$.
Hence, by Lemma~\ref{projrestrict}, 
$\pi_\pm^{A'_{\af,i}\cap A_{\beta,j}}(x)$ exist for all $x\in A_\beta$. 
Therefore, applying Lemma~\ref{rclimit}
with $A'_{\af,i}$ in place of $A$ and $A'_{\af,2}$ in place of $B$,
we have $A'_{\af,2}\subrc A'_{\af,i}$.
\end{proof}

\begin{claim}\label{freehorns}
$A'_{\af,2}\subfree A'_{\af,i}$ for each $i<2$.
\end{claim}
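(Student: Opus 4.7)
The plan is to realize $A'_{\af,i}$ as the directed union $\bigcup_{\beta\in J'_i(\af)}A_\beta$ of countable algebras (Lemma~\ref{lappsref} and the first subclaim), and then build an independent complement of $A'_{\af,2}$ in $A'_{\af,i}$ inductively along this system. For each $\beta\in J'_i(\af)$, set $B_\beta=A_\beta\cap A'_{\af,2}$; then $(B_\beta)$ is directed with union $A'_{\af,2}$, and the key local statement is $B_\beta\subfree A_\beta$, which I prove via Sirota's Lemma inside the countable algebra $A_\beta$.

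Relative completeness $B_\beta\subrc A_\beta$ follows from Claim~\ref{rcdir} combined with the inductive instance of \eqref{oldacomm} at stages below $\af$. Indeed, $A'_{\af,2}=\bigcup_{\delta\in K'_{\{0,1\}}(\af)}A_\delta$ is a directed union (Lemma~\ref{lappsint} and the first subclaim), and by \eqref{oldacomm} each $A_\delta$ commutes with $A_\beta$ as suborders of $A'_{\af,i}$, so Proposition~\ref{communion} yields $A_\beta\commute A'_{\af,2}$. Combining this with $A'_{\af,2}\subrc A'_{\af,i}$ (Claim~\ref{rcdir}), Lemma~\ref{commproj} gives $\pi_\pm^{A'_{\af,2}}[A_\beta]\subseteq A_\beta$, and then Lemma~\ref{projrestrict} yields $B_\beta\subrc A_\beta$. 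Countable generation of $A_\beta$ over $B_\beta$ is automatic.

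The main obstacle is to verify that $B_\beta$ splits perfectly in $A_\beta$. Here I expect to exploit the inductive free decomposition \eqref{newafreeext} at stage $\beta$: for each $j<\daleth(\beta)$, $A'_{\beta,j}\subfree A_\beta$, and the free complement at stage $\beta$ consists of generators disjoint from $\bigcup_{\gamma<\beta}M_\gamma$ (by the explicit disjointness in the $\daleth(\beta)=1$ construction, and by an analogous property in the $\daleth(\beta)=2$ case that will be obtained inductively when the construction at stages like $\af$ is completed). A careful analysis of $B_\beta$ using Lemmas~\ref{strataint} and \ref{lappsorder} together with the first subclaim shows $B_\beta$ is built from stages $\gamma<\beta$ and thus sits inside some $A'_{\beta,j}$; the fresh free generators of $A_\beta$ over $A'_{\beta,j}$ then provide the splitting, since any finite $F\subseteq A_\beta$ uses only finitely many such generators, leaving infinitely many to split $\langle B_\beta\cup F\rangle$ in $A_\beta$.

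With local freeness $B_\beta\subfree A_\beta$ in hand, I recursively choose independent complements $X_\beta\subseteq A_\beta$ over $B_\beta$ so that $X_\gamma\subseteq X_\beta$ whenever $A_\gamma\subseteq A_\beta$. Each extension step $A_\gamma\subseteq A_\beta$ is itself free (by another application of Sirota, using that $A_\gamma$ is relatively complete and perfectly split in $A_\beta$ for the same reasons as above), so a previously chosen $X_\gamma$ can always be extended to $X_\beta$ by selecting additional free generators of the relative complement. Taking the union $X=\bigcup_\beta X_\beta$ yields a set independent over $A'_{\af,2}$ that, together with $A'_{\af,2}$, generates $A'_{\af,i}$; hence $A'_{\af,i}=A'_{\af,2}\oplus\langle X\rangle$, which is the desired free extension.
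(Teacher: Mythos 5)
Your plan hinges on the ``local freeness'' $B_\beta\subfree A_\beta$ for \emph{every} $\beta\in J'_i(\af)$, and that is false. Consider $\beta=0$: since $0\in M_\gamma\cap(\gamma+1)$ for any $\gamma\in I_j(\af)$, and $0\in M_\af$, we get $0\in J'_0(\af)\cap J'_1(\af)=K'_2(\af)$, whence $A_0\subseteq A'_{\af,0}\cap A'_{\af,1}=A'_{\af,2}$ and so $B_0=A_0$. No boolean algebra is a coproduct of itself with an infinite free algebra, so $B_0\subfree A_0$ cannot hold; the same happens for every $\beta\in K'_2(\af)$. In particular your asserted ``careful analysis'' that $B_\beta$ always sits in some $A'_{\beta,j}$ is wrong for exactly those $\beta$, since $A_\beta\setminus\bigcup_j A'_{\beta,j}\not=\vn$ by construction. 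Locating $\beta$'s for which $B_\beta$ \emph{is} proper and lands in some $A'_{\beta,j}$ is precisely the job of Lemma~\ref{lappsindep}: one needs $\beta\in I'_i(\af)\setminus J_{1-i}(\af)$. Even if you restrict to that cofinal directed subfamily, your final step is not justified: to extend $X_\gamma$ to $X_\beta$ you must show that $\langle B_\beta\cup X_\gamma\rangle\subfree A_\beta$, which is a coherence statement about a square of free embeddings; it does not follow from $B_\gamma\subfree A_\gamma$, $B_\beta\subfree A_\beta$, and $A_\gamma\subfree A_\beta$ taken separately, and Sirota's Lemma certifies single embeddings, not compatibility of chosen complements across a directed system.

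The paper sidesteps both issues by never building a complement. Since $A'_{\af,i}\subseteq M_\af$ is countable and $A'_{\af,2}\subrc A'_{\af,i}$ by Claim~\ref{rcdir}, Sirota's Lemma reduces the claim to perfect splitting. Given an ultrafilter $U$ of $A'_{\af,2}$ and a finite $F\subseteq A'_{\af,i}$, directedness of $\mc{I}'_i(\af)$, downward closure of $\mc{J}_{1-i}(\af)$, and Lemma~\ref{lappsindep} let one pick a \emph{single} $\beta\in I'_i(\af)\setminus J_{1-i}(\af)$ with $F\subseteq A_\beta$; then $A_\beta\cap A'_{\af,2}\subseteq A'_{\beta,j}$ for some $j<\daleth(\beta)$ by Lemmas~\ref{strataint} and \ref{stratadir}, so $\langle A'_{\beta,j}\cup F\rangle$ splits in $A_\beta$ by \eqref{newafreeext} at stage $\beta$, and the resulting pair of ultrafilters of $A_\beta$ lift to $A'_{\af,i}$ because $A'_{\af,2}\commute A_\beta$ (by \eqref{oldacomm} and Proposition~\ref{communion}). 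Choosing $\beta$ afresh for each $(U,F)$ is the move that replaces your coherent system of complements and is what makes the argument go through.
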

\begin{proof}
Fix $i<2$. By Sirota's Lemma and the previous claim,
\istst\ $A'_{\af,2}$ splits perfectly in $A'_{\af,i}$.
Let $U$ be an ultrafilter of $A'_{\af,2}$ and
$F$ be a finite subset of $A'_{\af,i}$.
Applying Lemma~\ref{lappsindep},
there exists $\beta\in I'_i(\af)\setminus J_{1-i}(\af)$.
Since $\mc{I}'_i(\af)$ is directed and $\mc{J}_{1-i}(\af)$ is
downward closed in $\{M_\beta:\beta<\af\}$,
we may choose $\beta\in I'_i(\af)\setminus J_{1-i}(\af)$
such that $F\subseteq A_\beta$.
By Lemmas \ref{strataint} and \ref{stratadir},
$A_\beta\cap A'_{\af,2}\subseteq A'_{\beta,j}$ for some $j<\daleth(\beta)$.
Let $B\subalg A_\beta$ denote the subalgebra generated by 
$A'_{\beta,j}\cup F$. Extend $U\cap A_\beta$ to an ultrafilter $V$ of $B$.
Since $A'_{\beta,j}\subfree A_\beta$ by 
\eqref{newafreeext}\ for stage $\beta$,
$B$ splits in $A_\beta$. So, choose $y\in A_\beta$ and ultrafilters 
$V_\pm$ of $A_\beta$ that respectively extend $V\cup\{\pm y\}$.
By \eqref{oldacomm}\ for stages before $\af$ 
and by Proposition~\ref{communion},
$A'_{\af,2}\commute A_\beta$ as suborders of $A'_{\af,i}$, 
so both of $U\cup V_\pm$ extend to ultrafilters of $A'_{\af,i}$.
Thus, $A'_{\af,2}$ splits perfectly in $A'_{\af,i}$.
\end{proof}

Choose $B_\af=A'_{\af,0}\pushout A'_{\af,1}$
such that $B_\af\Subset M_\af$ and $B_\af\setminus(A'_{\af,0}\cup A'_{\af,1})$
is disjoint from $\bigcup_{\beta<\af}M_\beta$. 

\begin{claim}\label{oldacommpres}
\eqref{oldacomm}\ will be preserved if $B_\af\subalg A_\af$.
\end{claim}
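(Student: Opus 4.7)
The goal is to verify that \eqref{oldacomm}\ holds at stage $\af$, namely that $A_\beta \commute A_\gamma$ as suborders of $A_\af$ for all $\beta, \gamma$ with $M_\beta, M_\gamma \in M_\af$. Any such $\beta$ lies in $M_\af \cap \af = J'_0(\af) \cup J'_1(\af)$, so $A_\beta \subseteq A'_{\af, i_\beta}$ for some $i_\beta \in \{0,1\}$, and likewise for $\gamma$; thus $A_\beta \cup A_\gamma \subseteq B_\af$. Since commuting is determined by the order on $A_\beta \cup A_\gamma$ and the set $A_\beta \cap A_\gamma$, the hypothesis $B_\af \subalg A_\af$ reduces the task to showing $A_\beta \commute A_\gamma$ as suborders of $B_\af$.

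When $\beta, \gamma$ both lie in a single $J'_i(\af)$, the directedness of $\mc{J}'_i(\af)$ from Lemma~\ref{lappsref} supplies $\delta \in J'_i(\af)$ with $M_\beta \cup M_\gamma \subseteq M_\delta$; since $M_\beta, M_\gamma \in M_\delta$ by Lemma~\ref{lappsorder}, the inductive assumption of \eqref{oldacomm}\ at stage $\delta$ gives $A_\beta \commute A_\gamma$ in $A_\delta$, which transfers to $B_\af$.

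The interesting case is $\beta \in J'_0(\af)$ and $\gamma \in J'_1(\af)$. Here the pushout structure of $B_\af$ yields $A'_{\af,0} \commute A'_{\af,1}$ in $B_\af$. My plan is to sharpen this to $A_\beta \commute A_\gamma$ by first using Proposition~\ref{communion} together with the inductive \eqref{oldacomm}\ at earlier stages to establish the auxiliary relations $A_\beta \commute A'_{\af,2}$ (as suborders of $A'_{\af,0}$) and symmetrically $A'_{\af,2} \commute A_\gamma$: since $A'_{\af,2}$ is a directed union $\bigcup_{\delta \in K_{\{0,1\}}(\af)} A_\delta$ by Lemma~\ref{lappsint}, the directedness of $\mc{I}_0(\af)$ supplies, for each such $\delta$, a stage $\af' \in I_0(\af)$ with $M_\beta \cup M_\delta \subseteq M_{\af'}$, so the IH at $\af'$ delivers $A_\beta \commute A_\delta$ in $A_{\af'} \subalg A'_{\af,0}$, and Proposition~\ref{communion} then yields $A_\beta \commute A'_{\af,2}$; the symmetric statement for $A_\gamma$ is analogous.

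The main obstacle is combining the three commuting relations ($A'_{\af,0} \commute A'_{\af,1}$, $A_\beta \commute A'_{\af,2}$, $A'_{\af,2} \commute A_\gamma$) into a single interpolant for a given pair $x \in A_\beta \leq y \in A_\gamma$ that lands inside $A_\beta \cap A_\gamma$ rather than merely inside $A'_{\af,2}$, because naive chaining of the projections only alternates between $A_\beta \cap A'_{\af,2}$ and $A'_{\af,2} \cap A_\gamma$. My strategy here is to invoke the first subclaim of the preceding Claim to identify $A_\beta \cap A_\gamma$ with $A_\beta \cap M_\gamma$, and then realize it as a directed union $\bigcup_{\af' \in D} A_{\af'}$ via the decomposition $M_\beta \cap M_\gamma = \bigcup_{\af' \in D} M_{\af'}$ furnished by Lemma~\ref{strataint} (noting that $D \subseteq K_{\{0,1\}}(\af)$); the IH of \eqref{oldacomm}\ at a suitably large $\af' \in D$ is then used to compress the chained projections of $x$ and $y$ into a single element of $A_{\af'} \subseteq A_\beta \cap A_\gamma$ lying in $[x,y]$.
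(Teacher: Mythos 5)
Your skeleton is the same as the paper's: reduce to the mixed case $\beta\in J'_0(\af)$, $\gamma\in J'_1(\af)$; use the pushout structure to interpolate through $A'_{\af,2}$; then do two more interpolations to pull the middle element back toward $A_\beta$ and toward $A_\gamma$; and finally compress. But the last, crucial compression step is where your sketch has a genuine gap. After you obtain $z_0\in A_\beta\cap A'_{\af,2}\cap[x,y']$ and $z_1\in A'_{\af,2}\cap A_\gamma\cap[y',y]$, you still face exactly the original problem in miniature: $z_0\in A_\beta$, $z_1\in A_\gamma$, $z_0\leq z_1$, and you need an element of $[z_0,z_1]\cap A_\beta\cap A_\gamma$. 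Realizing $A_\beta\cap A_\gamma$ as $\bigcup_{\af'\in D}A_{\af'}$ does not by itself produce such an element: the IH at a stage $\af'$ gives an interpolant in $A_{\eta_0}\cap A_{\eta_1}$ for two parameters $\eta_0,\eta_1$ with $M_{\eta_0},M_{\eta_1}\in M_{\af'}$, not automatically one in $A_{\af'}$, and nothing in your sketch forces that interpolant into $A_\beta\cap A_\gamma$.

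What makes the paper's version of this step work is a $\rho$-argument that your outline omits. The paper first fixes a single $\gamma^*\in K'_2(\af)$ with $y'\in A_{\gamma^*}$ and arranges $z_0\in A_\beta\cap A_{\gamma^*}$, $z_1\in A_\gamma\cap A_{\gamma^*}$ (both in the \emph{same} $A_{\gamma^*}$, not merely in the union $A'_{\af,2}$). Setting $\delta_i=\rho(z_i)$, Lemma~\ref{rhodef} yields $M_{\delta_0}\subseteq M_\beta\cap M_{\gamma^*}$ and $M_{\delta_1}\subseteq M_\gamma\cap M_{\gamma^*}$, whence $A_{\delta_0}\subseteq A_\beta$, $A_{\delta_1}\subseteq A_\gamma$, and (using downward closure of $J_j(\af)$) $\delta_0,\delta_1\in K'_2(\af)$; directedness of $\mc{K}'_2(\af)$ plus the IH then gives $w\in[z_0,z_1]\cap A_{\delta_0}\cap A_{\delta_1}\subseteq A_\beta\cap A_\gamma$. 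Your sketch neither anchors $z_0,z_1$ in a common $A_{\gamma^*}$ nor invokes $\rho$/Lemma~\ref{rhodef}, so the inclusion $w\in A_\beta\cap A_\gamma$ is not established. (One could rescue your route by first finding, via directedness of $\mc{K}'_2(\af)$, a single $\theta\in K'_2(\af)$ with $z_0,z_1\in A_\theta$, and then running the same $\rho$-argument with $\theta$ in place of $\gamma^*$ --- but that extra step, and the $\rho$-argument itself, need to be said.) Two smaller slips: in your auxiliary step you want $I'_0(\af)$ or $J'_0(\af)$ rather than $I_0(\af)$ (you need $A_{\af'}\subseteq A'_{\af,0}$, which requires $\af'\in J'_0(\af)$), and $A'_{\af,2}$ is the directed union over $K'_{\{0,1\}}(\af)$, not $K_{\{0,1\}}(\af)$.
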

\begin{proof}
Suppose that $x_i\in A_{\beta_i}$ and $M_{\beta_i}\in M_\af$ for each $i<2$, 
and that $x_0\leq x_1$ in $A_\af$. 
It suffices to find $w\in[x_0,x_1]\cap M_{\beta_0}\cap M_{\beta_1}$.
Let $\beta_i\in J'_{j_i}$ for each $i<2$. We inductively assume 
that \eqref{oldacomm}\ holds for all stages before $\af$.
Therefore, since $\mc{J}'_0(\af)$ and $\mc{J}'_1(\af)$ are directed, 
if $j_0=j_1$, then $w$ as above exists. So, assume that $j_0\not=j_1$.
By symmetry, we may assume that $j_0=0$.
Assuming $B_\af\subalg A_\af$, we have 
$A'_{\af,0}\commute A'_{\af,1}$ as suborders of  $A_\af$. Hence, 
we may choose $y\in A'_{\af,0}\cap A'_{\af,1}\cap[x_0,x_1]$.
Choose $\gamma\in K'_2(\af)$ such that $y\in M_\gamma$.
We may choose $z_0\in[x_0,y]\cap A_{\beta_0}\cap A_\gamma$ and
$z_1\in[y,x_1]\cap A_{\beta_1}\cap A_\gamma$, again
because $\mc{J}'_0(\af)$ and $\mc{J}'_1(\af)$ are directed.
For each $i<2$, let $\delta_i=\rho(z_i,M\restrict\af)$; 
we then have $z_i\in M_{\delta_i}\subseteq M_{\beta_i}\cap M_\gamma$
by Lemma~\ref{rhodef}. 
Since $\mc{K}'_2(\af)$ is also directed, we may choose
$w\in[z_0,z_1]\cap A_{\delta_0}\cap A_{\delta_1}$.
Thus, $w\in[x_0,x_1]\cap A_{\beta_0}\cap A_{\beta_1}$.
\end{proof}

Given Claim~\ref{freehorns}, we may choose, for each $i<2$,
cofactor maps $\oplus_0=\id\colon A'_{\af,2}\rightarrow A'_{\af,i}$
and $\oplus_1=\zeta_{\af,i}\colon \Fr_\om\rightarrow A'_{\af,i}$. 
For each $i<2$, let $B_{\af,i}=\ran(\zeta_{\af,i})$ and 
$b_{\af,i}^n=\zeta_{\af,i}(\fr_n)$ for each $n<\om$.
Choose $C_\af=B_\af\oplus\Fr_2$ such that 
$C_\af\Subset M_\af$,
that $\oplus_0=\id_{B_\af}$, and that
$C_\af\setminus B_\af$ is disjoint from $\bigcup_{\beta<\af}M_\beta$.
For each $i<2$, let $h_{\af,i}=\eta_\af(\fr_i)$
where $\eta_\af$ is the cofactor map 
$\oplus_1\colon \Fr_2\rightarrow C_\af$.
Let $H_\af=\ran(\eta_\af)$.
By Lemma~\ref{freepushout}, $(A'_{\af,2},B_{\af,0},B_{\af,1})$
is independent; hence, so is $(A'_{\af,2},B_{\af,0},B_{\af,1},H_\af)$.
For each $n<\om$, let 
$e^n_{\af,0}=b_{\af,0}^n\wedge b_{\af,1}^n\wedge h_{\af,0}$ and
$e^n_{\af,1}=b_{\af,0}^{n+1}\wedge b_{\af,1}^n\wedge h_{\af,1}$;
let $I_\af$ be the ideal of $C_\af$ generated by 
$\{e^n_{\af,i}:(i,n)\in 2\times\om\}$. 

\begin{claim}
$I_\af\cap(B_\af\cup H_\af)=\{0\}$.
\end{claim}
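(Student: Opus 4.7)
The plan is to exploit the independence of $(A'_{\af,2},B_{\af,0},B_{\af,1},H_\af)$ in $C_\af$. First I would note that since $B_\af$ is generated by $A'_{\af,2}\cup B_{\af,0}\cup B_{\af,1}$, the pair $(B_\af,H_\af)$ is independent; combined with $B_\af\cup H_\af$ generating $C_\af$, this yields $C_\af=B_\af\oplus H_\af$. Let $a_{00}=-h_{\af,0}\wedge-h_{\af,1}$, $a_{01}=-h_{\af,0}\wedge h_{\af,1}$, $a_{10}=h_{\af,0}\wedge-h_{\af,1}$, $a_{11}=h_{\af,0}\wedge h_{\af,1}$ denote the four atoms of $H_\af$. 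Then every element of $C_\af$ has a unique normal form $\bigvee_{i,j<2}x_{ij}\wedge a_{ij}$ with $x_{ij}\in B_\af$, and the independence of $B_\af$ and $H_\af$ makes $x\leq y$ in $C_\af$ equivalent to the coordinatewise inequalities $x_{ij}\leq y_{ij}$ in $B_\af$.

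Given $x\in I_\af$, fix a finite $E\subseteq 2\times\om$ with $x\leq u:=\bigvee_{(i,n)\in E}e^n_{\af,i}$. Writing $c^n_0=b^n_{\af,0}\wedge b^n_{\af,1}$, $c^n_1=b^{n+1}_{\af,0}\wedge b^n_{\af,1}$, and $d_i=\bigvee_{n:(i,n)\in E}c^n_i$ (all in $B_\af$), a short computation yields $u=(d_0\wedge h_{\af,0})\vee(d_1\wedge h_{\af,1})$; expanding by the four atoms gives normal form coefficients $u_{00}=0$, $u_{01}=d_1$, $u_{10}=d_0$, $u_{11}=d_0\vee d_1$.

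Two cases then remain. If $x\in B_\af$, its normal form is constantly $x$, so $x\leq u_{00}=0$ immediately forces $x=0$. If $x\in H_\af$, each $x_{ij}\in\{0,1\}$; if $x\neq 0$ then some $x_{ij}=1$, which forces $u_{ij}=1$ in $B_\af$. The main obstacle — the one nontrivial step — is to rule this out for $(i,j)\neq(0,0)$. For that, Lemma~\ref{freepushout} gives $(A'_{\af,2},B_{\af,0},B_{\af,1})$ independent in $B_\af$, so the sequence $(b^n_{\af,0})_{n<\om}\cup(b^n_{\af,1})_{n<\om}$ is independent and generates a free subalgebra $F\subalg B_\af$ containing $d_0$ and $d_1$. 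The ultrafilter of $F$ that assigns $0$ to every $b^n_{\af,0}$ kills every $c^n_i$ and therefore kills each of $d_0$, $d_1$, and $d_0\vee d_1$; none of these equals $1$ in $F$ and hence none equals $1$ in $B_\af$. This completes the second case, yielding $I_\af\cap(B_\af\cup H_\af)=\{0\}$.
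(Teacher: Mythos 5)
Your proof is correct, and it rests on the same core fact as the paper's — the independence of $(A'_{\af,2},B_{\af,0},B_{\af,1},H_\af)$ in $C_\af$ — but realizes it in a more systematic way. The paper argues directly: for a finite join $e$ of the generators $e^n_{\af,i}$ and a nonzero $b\in B_\af$ or $h\in H_\af$, it names a nonzero $c$ below $b$ (resp.\ $h$) with $e\wedge c=0$. You instead decompose $C_\af$ as $B_\af\oplus H_\af$, compute the coordinates of the finite join along the four atoms of $H_\af$, and reduce the claim to $u_{00}=0$ (for the $B_\af$ case) and $d_0,d_1,d_0\vee d_1\neq 1$ in $B_\af$ (for the $H_\af$ case), settling the latter with an ultrafilter on the free subalgebra generated by the $b^n_{\af,i}$. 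For $B_\af$ the two arguments coincide: the paper's witness $b\wedge-h_{\af,0}\wedge-h_{\af,1}$ is precisely $b$ restricted to your atom $a_{00}$. For $H_\af$ your ultrafilter computation is arguably more robust: the paper's stated witness $c=h\wedge-b_{\af,1}^0$ directly annihilates only the $n=0$ summands of $e$, and as written the identity $e\wedge c=0$ would require the strengthened witness $c=h\wedge\bigwedge_{n<m}-b_{\af,1}^n$ once $m>1$; your normal-form and ultrafilter argument handles finite joins of arbitrary length uniformly without needing such a correction.
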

\begin{proof}
Let $1\leq m<\om$, let $e=\bigvee_{(i,n)\in 2\times m}e_{\af,i}^n$,
and let $J$ be the ideal generated by $e$.
We will show that $J\cap(B_\af\cup H_\af)=\{0\}$.
First, if $0<h\in H_\af$, then $h\not\leq e$ 
because $e\wedge c=0<c\leq h$ where $c=h\wedge -b_{\af,1}^0$.
Second, if $0<b\in B_\af$, then $b\not\leq e$
because $e\wedge c=0<c\leq b$ where $c=b\wedge -h_{\af,0}\wedge -h_{\af,1}$.
\end{proof}

By the above claim, we may choose a quotient
$D_\af=C_\af/I_\af$ such that $x/I_\af=x$ for all $x\in B_\af\cup H_\af$.
Choose $D_\af$ such that also 
$D_\af\Subset M_\af$ and $D_\af\setminus(B_\af\cup H_\af)$ is disjoint 
from $\bigcup_{\beta<\af}M_\beta$.
Note that $B_\af\not\subrc D_\af$ because, for example,
$\pi_+^{B_\af}(h_\af)$ does not exist because
$h_{\af,0}\leq_{D_\af} \bigwedge_{n<m}-(b_{\af,0}^n\wedge b_{\af,1}^n)$
for all $m<\om$. However, we still have the following.

\begin{claim}\label{roundup}
For each $i<2$, $A'_{\af,i}\subrc D_\af$. 
In particular, $\pi_+^{A'_{\af,0}}$ and $\pi_+^{A'_{\af,1}}$ satisfy
\eqref{essproj}\ for all $n<\om$ and $x\in D_\af$ of the forms below.
\begin{equation}\label{essproj}
\begin{array}{|l|l|l|}
x& \pi_+^{A'_{\af,0}}(x) & \pi_+^{A'_{\af,1}}(x)\\\hline
b_{\af,0}^n\wedge h_{\af,0}& b_{\af,0}^n & -b_{\af,1}^n\\\hline
b_{\af,1}^n\wedge h_{\af,1}& -b_{\af,0}^{n+1} & b_{\af,1}^n\\\hline
\end{array}
\end{equation}
\end{claim}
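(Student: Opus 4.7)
The plan is to establish the table entries first as explicit computations and then extend the same technique to general elements of $D_\af$. A preliminary observation is that the quotient map $q\colon C_\af\rightarrow D_\af$ is injective on $B_\af$ (hence on each $A'_{\af,i}$): the retraction $\phi\colon C_\af\rightarrow B_\af$ that fixes $B_\af$ and sends both $h_{\af,0}$ and $h_{\af,1}$ to $0$ annihilates every generator $e^n_{\af,j}$ of $I_\af$, since each such generator contains $h_{\af,0}$ or $h_{\af,1}$ as a meet factor; thus $I_\af\subseteq\ker\phi$, $\phi$ factors through $D_\af$, and injectivity on $B_\af$ follows. I will also use that $A'_{\af,i}\subrc B_\af$, which comes from Claim~\ref{rcdir} together with the pushout commutation $A'_{\af,0}\commute A'_{\af,1}$ in $B_\af$, by the standard calculation that $\pi_+^{A'_{\af,i}}\bigl(\bigvee_j(a_j\wedge b_j)\bigr)=\bigvee_j\bigl(a_j\wedge\pi_+^{A'_{\af,2}}(b_j)\bigr)$ on normal-form elements of $B_\af=A'_{\af,0}\pushout A'_{\af,1}$.

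For the table, I take $\pi_+^{A'_{\af,0}}(b_{\af,0}^n\wedge h_{\af,0})=b_{\af,0}^n$ as the model; the other three entries are symmetric. The upper bound is immediate. For minimality, suppose $y\in A'_{\af,0}$ and $b_{\af,0}^n\wedge h_{\af,0}\leq y$ in $D_\af$; lifting, pick $e\in I_\af$ with $b_{\af,0}^n\wedge h_{\af,0}\leq y\vee e$ in $C_\af$, expand $e$ as a finite join of generators, and meet both sides with the atom $h_{\af,0}\wedge -h_{\af,1}$ of $\Fr_2$. The $h_{\af,1}$-summands drop out, and independence of $B_\af$ and $\Fr_2$ reduces to the $B_\af$-inequality $b_{\af,0}^n\wedge -y\leq\bigvee_k(b_{\af,0}^{n_k}\wedge b_{\af,1}^{n_k})$. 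Set $z=b_{\af,0}^n\wedge -y\in A'_{\af,0}$ and invoke Lemma~\ref{freepushout} to get independence of $(A'_{\af,0},B_{\af,1})$ in $B_\af$; since $(b_{\af,1}^m)_{m<\om}$ is independent, the element $a=\bigwedge_k -b_{\af,1}^{n_k}$ is a nonzero atom of the subalgebra of $B_{\af,1}$ generated by the relevant $b_{\af,1}^{n_k}$, and meeting with $a$ forces $a\wedge z=0$, whence $z=0$ by independence, so $b_{\af,0}^n\leq y$.

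For the general assertion $A'_{\af,i}\subrc D_\af$, I represent an arbitrary $x\in D_\af$ as $q(\tilde x)$ with $\tilde x=\bigvee_{s\in 2^2}\beta_s\wedge h^s$ a normal-form element of $C_\af=B_\af\oplus\Fr_2$, where $h^s$ ranges over the four atoms of the $\Fr_2$-factor. Since $\pi_+^{A'_{\af,i}}$ preserves joins, it suffices to compute $\pi_+^{A'_{\af,i}}$ in $D_\af$ on each monomial $\beta_s\wedge h^s$; for each $h^s$, only those ideal generators bounded by $h^s$ contribute to the quotient, so the computation reduces to a finite case analysis exactly parallel to the table entries. The main obstacle will be carrying out this atom-by-atom analysis uniformly: each case requires the atom-selection trick from the previous paragraph, converting an inequality in $B_\af$ that mixes elements of $A'_{\af,0}$, of $B_{\af,1}$, and a finite set of indices into a pure $A'_{\af,0}$-inequality via Lemma~\ref{freepushout} and the independence of $(b_{\af,1}^m)_{m<\om}$, and this is the technical heart of the argument.
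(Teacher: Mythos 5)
Your setup matches the paper's: normal forms in $C_\af = B_\af\oplus\Fr_2$ with $B_\af=A'_{\af,0}\pushout A'_{\af,1}$, reduction via Lemma~\ref{rcmeet} and join-preservation, and then independence from Lemma~\ref{freepushout} to isolate a critical term. Your verification of $\pi_+^{A'_{\af,0}}(b_{\af,0}^n\wedge h_{\af,0})=b_{\af,0}^n$ is correct, and the preliminary observations (injectivity of the quotient on $B_\af$, $A'_{\af,i}\subrc B_\af$) are sound. But ``the other three entries are symmetric'' is misleading: the entry you worked out is the \emph{easy} one, since $b_{\af,0}^n\in A'_{\af,0}$, so it falls out of Lemma~\ref{rcmeet} once you note $\pi_+^{A'_{\af,0}}(h_{\af,0})=1$. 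The substantive entries are the off-diagonal ones, e.g.\ $\pi_+^{A'_{\af,1}}(b_{\af,0}^n\wedge h_{\af,0})=-b_{\af,1}^n$, where the competing upper bound $y$ lives in the \emph{opposite} algebra $A'_{\af,1-i}$, the answer encodes the ideal relation rather than a trivial round-up, and before you can select an annihilating element in $B_{\af,1-i}$ you must first decompose $-y$ over $A'_{\af,2}\oplus B_{\af,1-i}$ and then choose the isolating element in $B_{\af,i}$, not $B_{\af,1-i}$. This is a different argument, not a symmetric rewrite.

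The real gap is the full assertion $A'_{\af,i}\subrc D_\af$, which you leave as a plan and which is emphatically \emph{not} ``exactly parallel to the table entries.'' The decisive atom is $h_{\af,0}\wedge h_{\af,1}$: when you meet the lifted inequality with it, \emph{both} families of ideal generators $e^n_{\af,0}$ and $e^n_{\af,1}$ survive, and for $x=b_{1-i}\wedge h_{\af,0}\wedge h_{\af,1}$ with $b_{1-i}$ an arbitrary $B_{\af,1-i}$-normal form (positive index set $P_{1-i}$), the correct projection is governed by the union $P_{1-i}\cup(P_{1-i}\shiftr)$ (resp.\ the shift-left counterpart), so the two ideal families interact through a shift that never appears in any single table entry. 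You would need to (i) state the candidate value of $\pi_+^{A'_{\af,i}}((b_{1-i}\wedge h)/I_\af)$ for each of the four $\Fr_2$-atoms $h$ and each normal-form $b_{1-i}$ — this is the content of table~\eqref{roundup:targets} in the paper, and without it there is no precise statement to verify — and (ii) prove minimality by a case analysis over all ways the distributed ideal term can cover the left side, which is exactly what the conditions (X1)--(X4) together with tables~\eqref{roundup:shifted} and~\eqref{roundup:shave} carry out. Your proposal stops precisely at the point where the construction's asymmetric pairing of $b_{\af,0}^n$ with $b_{\af,1}^n$ via $h_{\af,0}$ versus $b_{\af,0}^{n+1}$ with $b_{\af,1}^n$ via $h_{\af,1}$ actually does work, so the claim is not yet proved.
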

\begin{proof}
For this proof our notation will suppress the dependence on $\af$.
Every nonzero element of $C$ is a finite nonempty join of 
elements of the form $x=a\wedge b_0\wedge b_1\wedge h$ where 
$a\in A'_2$, $h$ is of the form $\pm h_0\wedge\pm h_1$, 
and each $b_i$ is $\bigwedge_{n\in P_i}b_i^n\wedge\bigwedge_{n\in Q_i}-b_i^n$
where $P_i$ and $Q_i$ are each (possibly empty) finite subsets of $\om$.
(Our convention is that $\bigwedge\vn=1$.)
In general, $\pi_+^{A'_i}(y\vee z)=\pi_+^{A'_i}(y)\vee\pi_+^{A'_i}(z)$
and $\pi_-^{A'_i}(y)=-\pi_+^{A'_i}(-y)$ if the righthand sides exist. 
Moreover, by Lemma~\ref{rcmeet}, 
$$\pi_+^{A'_i}((a\wedge b_i\wedge b_{1-i}\wedge h)/I)=
a\wedge b_i\wedge\pi_+^{A'_i}((b_{1-i}\wedge h)/I)$$ if the righthand side exists.

Let $x=b_{1-i}\wedge_C h$ where $b_{1-i}$ and $h$ are as above.
We will show that each $\pi_+^{A'_i}(x/I)$ exists and equals $\tau_i(x)$ 
where $\tau_i(x)=\bigwedge_{n\in T_i}-b_i^n$ where
$T_i$ is as in \eqref{roundup:targets} below, 
which uses shift operator notation 
$S\shiftr=\{\beta+1:\beta\in S\}$ and $S\shiftl=\{\beta:\beta+1\in S\}$
for sets of ordinals.
\begin{equation}\label{roundup:targets}
\begin{array}{|rcr|l|l|l|l|}
      &h&       & T_0 & T_1\\\hline
-h_0&\wedge&-h_1& \vn & \vn\\\hline
 h_0&\wedge&-h_1& P_1 & P_0\\\hline
-h_0&\wedge& h_1& P_1\shiftr & P_0\shiftl\\\hline
 h_0&\wedge& h_1& P_1\cup(P_1\shiftr) & P_0\cup(P_0\shiftl)\\\hline
\end{array}
\end{equation}
In all cases, $x/I\leq\tau_i(x)$ follows directly 
from the definition of $I$. Moreover, \eqref{essproj}\ follows
from \eqref{roundup:targets}.

Henceforth working in $C$, suppose that 
$y\in A'_i$, $t\in I$, and $x\leq y\vee t$.
We will show that $\tau_i(x)\leq y\vee e$ for some $e\in I$.
Every element of $A'_i\setminus\{1\}$ is a nonempty finite meet 
of elements of the form $z\vee w_i$ where $z\in A'_2\setminus\{1\}$ and 
$w_i$ is $\bigvee_{n\in R_i}b_i^n\vee\bigvee_{n\in S_i}-b_i^n$
where $R_i$ and $S_i$ are each (possibly empty) finite subsets of $\om$
and $R_i\perp S_i$. (Our convention is that $\bigvee\vn=0$.)
Moreover, by~\eqref{roundup:targets},
$\tau_i(x)=1$ if and only if $h=-h_0\wedge-h_1$ or $P_{1-i}=\vn$.
Therefore, it is enough to assume that $y=z\vee w_i$ where $z$ and $w_i$ are as above
and prove that $h\leq h_0\vee h_1$,
that $\tau_i(x)\leq y\vee e$ for some $e\in I$,
and that $P_{1-i}\not=\vn$.

By assumption, we have $b_{1-i}\wedge h\leq z\vee w_i\vee t$;
by independence of $(A'_2,B_0,B_1,H)$, we have $b_{1-i}\wedge h\leq w_i\vee t$,
which implies $b_{1-i}\wedge h\leq w_i\vee (h_0\vee h_1)$;
by independence of $(B_0,B_1,H)$, we have $h\leq h_0\vee h_1$.
Choose $m<\om$ and $e=\bigvee_{\substack{j<2\\n<m}}e^n_i$
such that $m\supseteq S_i$ and $e\geq t$.
Let $g_{0,n}(0)=b_0^n$, $g_{1,n}(0)=b_0^{n+1}$, $g_{j,n}(1)=b_1^n$, 
and $g_{j,n}(2)=h_j$ for all $(j,n)\in 2\times\om$. Then 
$$b_{1-i}\wedge h\leq w_i\vee e
=w_i\vee \bigvee_{\substack{j<2\\n<m}}\ \bigwedge_{k<3}g_{j,n}(k)
=w_i\vee\bigwedge_{f\colon 2\times m\rightarrow 3}\ 
\bigvee_{\substack{j<2\\n<m}}g_{j,n}(f(j,n)).$$
Thus, $b_{1-i}\wedge h\leq w_i\vee e$ if and only if,
for all $f\colon 2\times m\rightarrow 3$,
\begin{equation}\label{roundup:distribute}
b_{1-i}\wedge h\leq w_i\vee\bigvee_{\substack{j<2\\n<m}}g_{j,n}(f(j,n)).
\end{equation}

Given any $f\colon 2\times m\rightarrow 3$, let, 
for each $(j,k)\in 2\times 3$, $E_{j,k}=\{n:f(j,n)=k\}$;
let $F_{1,0}=E_{1,0}\shiftr$ and $F_{1,1}=E_{1,1}$.
Independence of $(B_0,B_1,H)$, $(b_0^n)_{n<\om}$,
and $(b_1^n)_{n<\om}$ imply that \eqref{roundup:distribute} is
equivalent to the disjunction of
\enu
\item[(X1)] $P_{1-i}\not\perp E_{0,1-i}\cup F_{1,1-i}$;
\item[(X2)] $S_i\not\perp E_{0,i}\cup F_{1,i}$;
\item[(X3)] $h\leq h_0$ and $E_{0,2}\not=\vn$;
\item[(X4)] $h\leq h_1$ and $E_{1,2}\not=\vn$.
\une 
Similarly, we have $\tau_i(x)\leq w_i\vee e$ if and only if, for
all $f\colon 2\times m\rightarrow 3$, we have 
$S_i\not\perp E_{0,i}\cup F_{1,i}$ or $T_i\not\perp S_i$.
By~\eqref{roundup:targets},
$T_i\not\perp S_i$ if and only if $P_{1-i}\not\perp U_i$ 
where $U_i$ is as in \eqref{roundup:shifted} below.
\begin{equation}\label{roundup:shifted}
\begin{array}{|rcr|l|l|}
     &h&   & U_0 & U_1\\\hline
 h_0&\wedge&-h_1 & S_0 & S_1\\\hline
-h_0&\wedge& h_1 & S_0\shiftl & S_1\shiftr\\\hline
 h_0&\wedge& h_1 & S_0\cup(S_0\shiftl) & S_1\cup(S_1\shiftr)\\\hline
\end{array}
\end{equation}
By choosing $f$ according to \eqref{roundup:shave} below,
we ensure that (X2), (X3), and (X4) fail, and, therefore,
that (X1) holds.
\begin{equation}\label{roundup:shave}
\begin{array}{|rcr|l|l|l|l|l|l|l|l|l|}
      &h&       & i & E_{0,2}&E_{1,2} & E_{0,1-i} & E_{1,1-i} & m\setminus F_{1,i} & F_{1,1-i}\\\hline
h_0&\wedge&-h_1 & 0 & \vn&m          & S_0 & \vn & \vn & \vn\\\hline
h_0&\wedge&-h_1 & 1 & \vn&m          & S_1 & \vn & \vn & \vn\\\hline
-h_0&\wedge&h_1 & 0 &   m&\vn        & \vn & S_0\shiftl & S_0\cup\{0\} & S_0\shiftl\\\hline
-h_0&\wedge&h_1 & 1 &   m&\vn        & \vn & S_1 & S_1 & S_1\shiftr\\\hline
 h_0&\wedge&h_1 & 0 & \vn&\vn        & S_0 & S_0\shiftl & S_0\cup\{0\} & S_0\shiftl\\\hline
 h_0&\wedge&h_1 & 1 & \vn&\vn        & S_1 & S_1 & S_1 & S_1\shiftr\\\hline
\end{array}
\end{equation}
Comparing \eqref{roundup:shifted} with \eqref{roundup:shave},
we see that $E_{0,1-i}\cup F_{1,1-i}=U_i$ in all cases.
Therefore, $P_{1-i}\not\perp U_i$. 
Thus, $\tau_i(x)\leq y\vee e$ and $P_{1-i}\not=\vn$.
\end{proof}

Choose $A_\af=D_\af\oplus\Fr_\om$ such that 
$A_\af\Subset M_\af$,
that $\oplus_0=\id_{D_\af}$, and that
$A_\af\setminus D_\af$ is disjoint from $\bigcup_{\beta<\af}M_\beta$.
Thus, \eqref{mknowsa}\ is preserved.
By construction, $A_\af\cap\bigcup_{\beta<\af}M_\beta=\bigcup_{i<2}A'_{\af,i}$,
so \eqref{newainnewm}\ is preserved.
By Claim~\ref{oldacommpres}, \eqref{oldacomm}\ is preserved.
By Sirota's Lemma, since $A'_{\af,i}\subrc D_\af\subrc A_\af$ for all $i<2$
and $D_\af$ splits perfectly in $A_\af$, \eqref{newafreeext}\ is preserved.
Thus, our construction of $\Omega$ is complete and $\Omega$ has the FN.

Choose a \lapps{\oml}\ $(N_\af)_{\af<\om_2}$ with 
$N_\af\elemsub H(\al_3)$ for all $\af<\om_2$ and
$A,M\in N_0$ (which implies $\Omega\in N_0$).
We will show that $\Omega[N]$ is not locally finite.
Let $\delta=\oml+1$; let $\beta=\om_2\cap N_{\delta,0}$, which
is in $\om_2\cap N_{\delta,1}$ and has cofinality $\oml$
by Lemma~\ref{lappscard};
let $[\beta,\af)=[\beta,\beta+\oml)\cap N_{\delta,1}$.
Note that $\af\in N_\delta$ and $\cardhead{1}{\af}=\beta$.

\begin{claim}\label{reflect}
$\beta\cap N_{\delta,1}=\beta\cap M_{\af,1}$.
\end{claim}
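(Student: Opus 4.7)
The plan is to prove both containments of $\beta\cap N_{\delta,1}=\beta\cap M_{\af,1}$, using that $N_{\delta,1}=N_{\om_1}$ (since $I_1(\delta)=\{\om_1\}$) and $M_{\af,1}=\bigcup_{\xi<\zeta}M_{\beta+\xi}$, where $\zeta:=N_{\om_1}\cap\om_1$ and $\af=\beta+\zeta$.  Observe that $\beta\in N_{\om_1}$: the sequence $(N_\gamma)_{\gamma<\om_1}$ lies in $N_{\om_1}$ by retrospection, making $N_{\delta,0}$ and $\beta=\sup(N_{\delta,0}\cap\om_2)$ definable in $N_{\om_1}$; likewise $M\in N_0\subseteq N_{\om_1}$.

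For the easy direction $\beta\cap M_{\af,1}\subseteq\beta\cap N_{\om_1}$: for each $\xi<\zeta$, both $\beta$ and $\xi$ lie in $N_{\om_1}$, so $M_{\beta+\xi}\in N_{\om_1}$.  Since $|M_{\beta+\xi}|<\om_1$ and $N_{\om_1}$ is $\om_1$-approximating, $M_{\beta+\xi}\subseteq N_{\om_1}$; taking the union over $\xi<\zeta$ gives $M_{\af,1}\subseteq N_{\om_1}$.

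For the reverse direction $\beta\cap N_{\om_1}\subseteq\beta\cap M_{\af,1}$, the crucial step is to recognize that the shifted tail $(M_{\beta+\xi})_{\xi<\om_1}$ is itself a \lapps{\oml}.  Since $\beta$ is a nonzero multiple of $\om_1$ below $\om_2$ and $\daleth(\beta)=1$, we have $\cardhead{\daleth(\beta)}{\beta+\xi}=\beta$ for every $\xi<\om_1$, so the observation preceding Lemma~\ref{lappsfund} applies.  Next, by retrospection $(M_\gamma)_{\gamma<\beta}\in M_\beta$, so $\beta\in M_\beta$ as the length of this sequence.  Since $|\beta|=\om_1$, Lemma~\ref{cover0} applied to the shifted sequence (with $S=\beta$ playing the role of its initial element) yields $\beta\subseteq\bigcup_{\xi<\om_1}M_{\beta+\xi}$.

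Finally, given $x\in\beta\cap N_{\om_1}$, the first-order statement ``$\exists\xi<\om_1,\ x\in M_{\beta+\xi}$'' holds in $H(\al_3)$ and uses only parameters $x,\beta,M$ in $N_{\om_1}$, so by elementarity a witness $\xi\in N_{\om_1}\cap\om_1=\zeta$ exists, giving $x\in M_{\beta+\xi}\subseteq M_{\af,1}$.  The main obstacle is recognizing $(M_{\beta+\xi})_{\xi<\om_1}$ as a \lapps{\oml} so that Lemma~\ref{cover0} can transfer the trivial fact $\beta\in M_\beta$ into a covering of all of $\beta$; after that step, the descent from an $\om_1$-witness to a $\zeta$-witness is a routine use of elementarity.
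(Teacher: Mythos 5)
Your proof is correct and takes essentially the same route as the paper's: both rest on applying Lemma~\ref{cover0} to the shifted sequence $(M_{\beta+\xi})_{\xi<\om_1}$ (which requires the observation preceding Lemma~\ref{lappsfund} to see it is a \lapps{\oml}) to obtain $\beta\subseteq\bigcup_{\xi<\om_1}M_{\beta+\xi}$, then use that $\beta,M\in N_{\om_1}\elemsub H(\al_3)$ to pass between $N_{\om_1}$ and $M_{\af,1}$. The paper compresses both containments into a single chain of equalities justified by elementarity, whereas you spell them out separately; otherwise the argument is the same.
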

\begin{proof}
By Lemma~\ref{cover0}, $\beta\subseteq\bigcup_{\theta<\oml}M_{\beta+\theta}$.
Since $\beta,M\in N_{\delta,1}\elemsub H(\al_3)$, we have
$$\beta\cap N_{\delta,1}=\beta\cap\bigcup
\{M_\theta:\theta\in[\beta,\beta+\oml)\cap N_{\delta,1}\}
=\beta\cap\bigcup_{\beta\leq\theta<\af}M_\theta=\beta\cap M_{\af,1}.\qedhere$$
\end{proof}

We will show that $\{h_{\af,0},h_{\af,1},b_{\af,0}^0\}$ 
generates an infinite subalgebra of $\Omega[N]$.
\Istst\ 
\itz
\item $\rho(b_{\af,i}^m\wedge h_{\af,i},N)=\delta$ for all $i<2$ and $m<\om$,
\item $\pi_+^0(b_{\af,1}^m\wedge h_{\af,1},N)=-b_{\af,0}^{m+1}$ for all $m<\om$, and
\item $\pi_+^1(b_{\af,0}^m\wedge h_{\af,0},N)=-b_{\af,1}^m$ for all $m<\om$.
\zti

First, $\rho(b_{\af,i}^m\wedge h_{\af,i},M)=\af$ by construction.
By definition of $\af$, we have $\af\in N_\delta\setminus N_{\delta,1}$;
since $\beta\leq\af$, we also have $\af\not\in N_{\delta,0}$.
Since $M,\rho(\bullet,M)\in N_0$ and $M_\af$ is countable, we conclude that
$b_{\af,i}^m\wedge h_{\af,i}\in N_\delta\setminus\bigcup_{j<2}N_{\delta,j}$.
Hence, $\rho(b_{\af,i}^m\wedge h_{\af,i},N)=\delta$.

Second, we have,
by Corollary~\ref{rcnested} and Claim~\ref{roundup},
$$\pi_+^0(b_{\af,1}^m\wedge h_{\af,1},N)
=\bigwedge_{j<2}\pi_+^{N_{\delta,0}}(\pi_+^j(b_{\af,1}^m\wedge h_{\af,1},M))
=\pi_+^{N_{\delta,0}}(-b_{\af,0}^{m+1})\wedge\pi_+^{N_{\delta,0}}(b_{\af,1}^m).$$
We have $\pi_+^{N_{\delta,0}}(-b_{\af,0}^{m+1})=-b_{\af,0}^{m+1}$ because
$-b_{\af,0}^{m+1}\in N_{\delta,0}$ because 
$\rho(-b_{\af,0}^{m+1},M)\in I_0(\af)=\beta$. We have
$\pi_+^{N_{\delta,0}}(b_{\af,1}^m)=\pi_+^{A'_{\af,2}}(b_{\af,1}^m)=1$ 
by Lemma~\ref{projrestrict} because 
$N_{\delta,0}\cap\Omega=A_{\af,0}\commute A'_{\af,1}$
by \eqref{oldacomm} and Proposition~\ref{communion},
and because $A_{\af,0}\cap A'_{\af,1}=A'_{\af,2}\subrc A'_{\af,1}$.
Thus, $\pi_+^0(b_{\af,1}^m\wedge h_{\af,1},N)=-b_{\af,0}^{m+1}$.

Third, we have,
by Corollary~\ref{rcnested} and Claim~\ref{roundup},
$$\pi_+^1(b_{\af,0}^m\wedge h_{\af,0},N)
=\bigwedge_{j<2}\pi_+^{N_{\delta,1}}(\pi_+^j(b_{\af,0}^m\wedge h_{\af,0},M))
=\pi_+^{N_{\delta,1}}(b_{\af,0}^m)\wedge\pi_+^{N_{\delta,1}}(-b_{\af,1}^m).$$
We have $\pi_+^{N_{\delta,1}}(-b_{\af,1}^m)=-b_{\af,1}^m$ because
$\rho(-b_{\af,1}^m,M)\in I_1(\af)=[\beta,\af)\subseteq N_{\delta,1}$.
We have $\pi_+^{N_{\delta,1}}(b_{\af,0}^m)=\pi_+^{A'_0\cap N_{\delta,1}}(b_{\af,0}^m)$
by Lemma~\ref{projrestrict} because 
$A'_{\af,0}\commute(\Omega\cap N_{\delta,1})$
by \eqref{oldacomm} and Proposition~\ref{communion},
and because, arguing as in the
proof of Claim~\ref{rcdir}, $A'_{\af,0}\cap N_{\delta,1}\subrc A'_{\af,0}$.
Because $A'_{\af,0}\cap N_{\delta,1}=A'_{\af,2}$ by Claim~\ref{reflect},
we also have $\pi_+^{A'_0\cap N_{\delta,1}}(b_{\af,0}^m)=\pi_+^{A'_{\af,2}}(b_{\af,0}^m)=1$.
Thus, $\pi_+^1(b_{\af,0}^m\wedge h_{\af,0},N)=-b_{\af,1}^m$.

Thus, $\Omega$ has the FN but not the SFN.

We briefly remark that the interaction between 
$\pi_+^0$ and $\pi_+^1$ is essential to
the above construction. Given boolean algebras
$K\subrc L$, it is not hard to check, using
Lemma~\ref{rcmeet}, that 
$(L,\wedge_L,\vee_L,{-}_L,0_L,1_L,\pi_+^K,\pi_-^K)$
is locally finite. This lemma can also be used 
to re-prove the implication from FN to SFN 
for boolean algebras of size at most $\al_1$, 
without using, as Heindorf and Shapiro do, 
the implication from FN to projectivity 
for boolean algebras of size at most $\al_1$.

Indeed, given a boolean algebra $A$ of size at most $\al_1$
with the FN, let $(M_\af)_{\af<\oml}$ be a \lapps{\oml}
with $A\in M_0$, let $\mc{F}_1$ be a chain of
finite subalgebras of $A\cap M_0$ with union $A\cap M_0$,
and then inductively assume that
$1\leq\af<\oml$ and $\mc{F}_\af$ is a pairwise commuting 
cofinal family of finite subalgebras of $A\cap M_{\af,0}$.
Let $A\cap M_\af=\{a_n:n<\om\}$ and set $C_0=\{0,1\}$.
Given $n<\om$ and a finite $C_n\subalg A\cap M_\af$,
let $A_n$ be the subalgebra of $A$ generated by $C_n\cup\{a_n\}$; 
choose $B_n\in\mc{F}_\af$ containing $\pi_+^{M_{\af,0}}[A_n]$;
let $C_{n+1}$ be the subalgebra of $A$ generated by $A_n\cup B_n$. 
By Lemma~\ref{rcmeet}, $\pi_+^{M_{\af,0}}[C_{n+1}]=B_n$; hence, 
for all $D\in\mc{F}_\af$, 
$\pi_+^D[C_{n+1}]=(\pi_+^D\circ\pi_+^{M_{\af,0}})[C_{n+1}]=\pi_+^D[B_n]$, 
which implies $C_{n+1}\commute D$ by Lemma~\ref{commproj}.
Thus, $\mc{F}_{\af+1}=\mc{F}_\af\cup\{C_n:n<\om\}$
is a pairwise commuting cofinal family 
of finite subalgebras of  $A\cap M_{\af+1,0}$.


\begin{thebibliography}{19}

\bibitem{davies} R. O. Davies, {\it Covering the plane with denumerably many curves}, J. London Math. Soc. \textbf{38} (1963), 433--438.

\bibitem{freesenation} R. Freese and J. B. Nation, {\it Projective lattices}, Pacific Journal of Mathematics, \textbf{75} (1978), 93--106.

\bibitem{fks} S. Fuchino, S. Koppelberg, and S. Shelah, {\it Partial orderings with the weak Freese-Nation property}, Ann. Pure Appl. Logic \textbf{80} (1996), no. 1, 35--54. 

\bibitem{hs} L. Heindorf and L. B. Shapiro, \emph{Nearly Projective Boolean Algebras}, with an appendix by S. Fuchino, Lecture Notes in Mathematics {\bf 1596}, Springer-Verlag, Berlin, 1994. 

\bibitem{jackson} S. Jackson and R. D. Mauldin, \emph{On a lattice problem of H. Steinhaus},  J. Amer. Math. Soc.  {\bf 15}  (2002),  no. 4, 817--856.

\bibitem{kopp} S. Koppelberg, \emph{General theory of Boolean algebras.},
vol. 1 of: J. D. Monk with R. Bonnet (Eds.), Handbook of Boolean algebras,
North-Holland, Amsterdam etc. 1989.

\bibitem{nthomog} D. Milovich, {\it Noetherian types of homogeneous compacta and dyadic compacta}, Topology Appl. {\bf 156} (2008), 443--464.

\bibitem{mlkfn} D. Milovich, {\it The $(\lm, \ka)$-Freese-Nation property for Boolean algebras and compacta}, Order \textbf{29} (2012), 361--379.

\bibitem{shapiro} L. B. \v Sapiro, \emph{The space of closed subsets of $D^{\aleph_2}$ is not a dyadic bicompactum}, Soviet Math. Dokl. {\bf 17} (1976), no. 3, 937--941.

\bibitem{sckmet} E. V. \v S\v cepin, \emph{On $\kappa$-metrizable spaces}, Math. USSR-Izv. {\bf 14} (1980), no. 2, 406--440.

\bibitem{scfunct} E. V. Shchepin, \emph{Functors and uncountable powers of compacta}, Russian Math. Surveys, {\bf 36} (1981), no. 3, 1--71.

\bibitem{sirota} S. Sirota, \emph{Spectral representation of spaces of closed subsets of bicompacta}.
Soviet Math. Doklady, {\bf 9} (1968), 997--1000.

\bibitem{dsoukup} D. Soukup, {\it Davies-trees in infinite combinatorics}, Logic Colloquium 2014, arXiv:1407.3604.
\end{thebibliography}
\end{document}